\documentclass[10pt, a4paper]{amsart}
\usepackage{amsmath,amstext,amscd, amssymb,txfonts, epsfig, psfrag, color, multicol, graphicx}
\usepackage[normalem]{ulem}
\usepackage[colorlinks=true, pdfstartview=FitV, linkcolor=blue, citecolor=blue, urlcolor=blue]{hyperref}

\theoremstyle{plain}

\newtheorem{thm}{Theorem}[section]
\newtheorem{lemma}[thm]{Lemma}
\newtheorem{cor}[thm]{Corollary}
\newtheorem{prop}[thm]{Proposition}

\theoremstyle{definition}
\newtheorem{defn}[thm]{Definition}

\newtheorem{Open questions}[thm]{Open questions}
\newtheorem{Open question}[thm]{Open question}
\newtheorem{Open problems}[thm]{Open problems}
\newtheorem{Open problem}[thm]{Open problem}

\definecolor{dmagenta}{rgb}{.5,0,.5} 
\definecolor{dred}{rgb}{.5,0,0} 
\definecolor{dgreen}{rgb}{0,.5,0} 
\definecolor{dblue}{rgb}{0,0,0.5} 
\definecolor{black}{rgb}{0,0,0} 
\definecolor{vdgreen}{rgb}{0,.3,0} 
\definecolor{vdred}{rgb}{.3,0,0} 
\definecolor{red}{rgb}{1,0,0}

\def\Bbb{\mathbb}

\def\Z{\Bbb{Z}}
\def\N{\Bbb{N}}

\def\ni{\noindent}
\def\ie{{{\it i.e.}\/}\ }

\def\Area{\hbox{\rm Area}}

\def\Dist{\hbox{\rm Dist}}

\def\F+L{\hbox{$\textup{F}\!_+\textup{L}$}}

\renewcommand{\H}{\mathcal{H}}

\newcommand{\FreeEq}{\stackrel{\textup{fr}}{=}}

\def\ms{\medskip}

\def\onto{{\kern3pt\to\kern-8pt\to\kern3pt}}

\def\<{\langle}
\def\>{\rangle}
\def\|{{\ |\ }}

\newcommand{\set}[1]{\left\{#1\right\}}

\newcommand{\abs}[1]{\left|#1\right|}
\renewcommand{\ni}{\noindent}

\renewcommand{\ms}{\medskip}

\def\*{^{\star}}

\setlength{\parindent}{0pt}
\setlength{\parskip}{7pt}

\begin{document}

\title{Hydra Groups}

\author{W.\ Dison and T.R.\ Riley}

\date \today

\begin{abstract}
\ni We give examples of  $\textup{CAT}(0)$, biautomatic, free--by--cyclic, one--relator groups which have finite--rank free subgroups of huge (Ackermannian) distortion.  This leads to elementary examples of groups whose Dehn functions are similarly extravagant.  This behaviour originates in manifestations  of \emph{Hercules--versus--the--hydra battles}  in string--rewriting.

 \ms

\footnotesize{\ni \textbf{2010 Mathematics Subject
Classification:  20F65, 20F10, 20F67}  \\ \ni \emph{Key words and phrases:} free--by--cyclic group, subgroup distortion, Dehn function, hydra}
\end{abstract}

\maketitle

 \section{Introduction}

\subsection{Hercules versus the hydra}

Hercules' second labour was to fight the Lernaean hydra, a beast with multiple serpentine heads enjoying magical regenerative powers: whenever a head was severed, two grew in its place.   Hercules succeeded with the help of his nephew, Iolaus, who stopped the regrowth by searing the stumps with a burning torch after each decapitation.  The extraordinarily fast--growing functions we will encounter in this article stem from a re-imagining of this battle.

For us, a \emph{hydra} will be a finite--length \emph{positive} word on the alphabet $a_1, a_2, a_3, \ldots$ --- that is, it includes no inverse letters ${a_1}^{-1}, {a_2}^{-1}, {a_3}^{-1}, \ldots$.    Hercules fights a hydra by striking off its first letter.  The hydra then regenerates as follows: each remaining letter $a_i$, where $i>1$, becomes $a_ia_{i-1}$ and the $a_1$ are unchanged.  This process --- removal of the first letter and then regeneration --- repeats, with Hercules victorious \emph{when} (not \emph{if}!) the hydra is reduced to the empty word $\varepsilon$.

For example, Hercules defeats the hydra $a_2 a_3 a_1$ in five strikes:
$$a_2 a_3 a_1 \ \to \   a_3 a_2  a_1 \ \to \   a_2 a_1 a_1 \  \to \   a_1 a_1 \  \to \  a_1 \  \to \  \varepsilon.$$
(Each arrow represents the removal of the first letter and then regeneration.)

\begin{prop} \label{Hercules always wins}
Hercules defeats all hydra.
\end{prop}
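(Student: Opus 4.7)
The plan is to induct on the maximum index $n$ appearing in the hydra. The key observation is that the regeneration rule $a_i \mapsto a_i a_{i-1}$ (for $i>1$) produces letters only of index $\leq i$, so the number of occurrences of the top-index letter $a_n$ is monotonically non-increasing, and strictly decreases by one exactly when an $a_n$ is struck off as the first letter. Proving Hercules defeats a hydra of max index $n$ therefore reduces to showing that, after finitely many strikes, an $a_n$ gets chopped; iterating this eliminates all $a_n$'s, and then the induction hypothesis takes over for the residual hydra of smaller max index.

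The base case is immediate: a hydra of all $a_1$'s is defeated in exactly its length many strikes, since the regeneration fixes $a_1$. For the inductive step, suppose Hercules defeats every hydra of max index strictly less than $n$, and consider $w$ with max index $n$ containing $k \geq 1$ occurrences of $a_n$. Write $w = u \, a_n \, v$, where $u$ is the (possibly empty) prefix preceding the first $a_n$; by construction, every letter of $u$ has index $<n$.

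The crucial lemma, which I expect to be the main step to verify, is that so long as no $a_n$ has yet been chopped, the prefix of the full hydra preceding the first $a_n$ evolves exactly as the standalone hydra $u$ would under Hercules' dynamics. This amounts to a bookkeeping observation: regeneration propagates information strictly rightward, so the $a_{n-1}$ spawned by the first $a_n$ sits to the right of that $a_n$ and never intrudes into the prefix, and while the prefix is non-empty each strike lands inside it. Granting this, the inductive hypothesis says $u$ is exhausted in some finite number $T$ of strikes, after which the first letter of the full hydra is $a_n$, so the $(T+1)$-st strike chops it.

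Iterating this $k$ times drives the count of $a_n$'s down to zero, leaving a hydra of max index $<n$, which the induction hypothesis then finishes off. An equivalent and perhaps cleaner phrasing is to attach to each hydra the lexicographic weight $(n,k)$ --- where $n$ is the max index and $k$ the number of occurrences of $a_n$ --- and verify that this weight strictly descends every time an $a_n$ is chopped; well-foundedness of the lex order on $\N \times \N$ then yields termination.
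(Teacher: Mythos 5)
Your proof is correct and is essentially the same argument as the paper's: induct on the maximum index, note that no new top-index letters are ever created, observe that the prefix before the first top-index letter is a hydra evolving independently, defeat it by induction, strike the top-index letter, and iterate. You spell out the "prefix evolves independently" step and offer a lexicographic-wellfoundedness rephrasing, but these are elaborations of what the paper compresses into a single sentence rather than a different route.
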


\begin{proof}
When fighting a hydra in which the highest index present is $k$,  no $a_i$ with $i >k$ will ever appear, and nor will any new $a_k$.   The prefix before the first $a_k$ is itself a hydra, which, by induction, we can assume Hercules defeats.   Hercules will then remove that $a_k$, decreasing the total number of $a_k$ present.  It follows that Hercules eventually wins.
 \end{proof}

However these battles are of extreme duration.  Define $\H(w)$ to be the number of strikes it takes Hercules to vanquish the hydra $w$, and for integers $k \geq 1$, $n \geq 0$,  define $\H_k(n) := \H({a_k}^n)$.
We call the $\H_k$ \emph{hydra functions}.  Here are some values of $\H_k(n)$.
$$\begin{array}{r | c c c c c c c }
                           & 1 & 2 & 3 & 4 &  \cdots  & n &  \cdots   \\ \hline
1                         & 1 & 2 & 3 & 4 &  \cdots  & n &  \cdots \\
2                         & 1 & 3 & 7 & 15 &  \cdots  & 2^n-1 &  \cdots \\
3                         & 1 & 4 & 46 & 3.2^{46}-2 
&  \cdots  & \cdots &  \cdots \\ \vspace{-2mm}
\vdots                & \vdots & \vdots & \vdots  & \vdots  &     &   &   \\   \vspace{-2mm}
k                         & 1 & k+1 & \vdots & \vdots &    &   &    \\ \vspace{-2mm}
\vdots                & \vdots & \vdots &   \vdots &  \vdots &     &   &   \\
\end{array} $$

To see that $\H_2(n) = 2^n-1$ for all $n$, note that $$\H\left({a_2}^{n+1}\right) \ =  \  \H\left({a_2}^{n}\right) +  \H\left(a_2{a_1}^{\H({a_2}^{n})}\right) \ = \   2\H\left({a_2}^{n}\right)+1.$$
 And  $\H_3(n)$ is essentially an $n$--fold iterated exponential function because, for all $n >0$, $$\H_3(n+1) \  = \  3. 2^{\mbox{$ \H_3(n)$}} -2,$$ by the calculations
\begin{eqnarray*}
  \H( {a_3}^{n+1} )  & = &   \H({a_3}^{n}) +1 + \H \left({a_2}{a_1} \, {a_2}{a_1}^2 \, \ldots \, {a_2}{a_1}^{\mbox{$ \H({a_3}^{n})$}}\right), \\
 \H({a_2}{a_1} \, {a_2}{a_1}^2 \, \ldots \, {a_2}{a_1}^m) & = & 3 . 2^m -m -3.
 \end{eqnarray*}


Extending this line of reasoning, we will derive relationships \eqref{eq4} and \eqref{phi recursion relation} in Section~\ref{hydra functions versus Ackermann's functions} from which it will follow that  $$\H_4(3)  \ =  \   3 \cdot 2^{3 \cdot 2^{3 \cdot 2^{3 \cdot 2^{5}-1}-1}-1}-1.$$ 
So these functions are extremely wild.   The reason behind the fast growth is a nested recursion.  What we have is a variation on Ackermann's functions $A_k  : \N \to \N$,  defined for integers $k,n \geq 0$ by:
\begin{align*}
A_0(n) & \ = \ n+2    \textup{ for }    n \geq 0,   \\
A_k(0) & \ =  \
\begin{cases}
0 & \textup{ for }   k = 1  \\
1 & \textup{ for }   k \geq 2,
\end{cases}
     \\
  \ \ \ \text{ and }  \ \ \
A_{k+1}(n+1) & \  =  \  A_k({A_{k+1}}(n))  \textup{ for }   k,n  \geq 0.
\end{align*}
So, in particular, $A_1(n)=2n$,  $A_2(n)=2^n$ and $A_3(n) = \exp_2^{(n)}(1)$, the $n$--fold iterated power of $2$.
(Definitions of Ackermann's functions occur with minor variations in the literature.) 
Ackermann's functions are represent the successive levels of the Grzegorczyk hierarchy, which is a grading of all primitive recursive functions --- see, for example, \cite{Rose}.

We will prove the following relationship in Section~\ref{hydra functions versus Ackermann's functions}.  Our notation in this proposition and henceforth is that for $f,g:\N \to \N$, we write $f \preceq g$ when there exists $C>0$ such that for all $n$ we have $f(n) \leq Cg(Cn+C)+Cn+C$.  This gives an equivalence relation capturing qualitative agreement of growth rates:  $f \simeq g$ if and only if $f \preceq g$ and $g \preceq f$.

\begin{prop} \label{prop2}
For all $k \geq 1$, $\H_k \simeq A_k$.
\end{prop}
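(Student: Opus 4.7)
The plan is to proceed by induction on $k$, exploiting the recursive structure of both $\H_k$ and $A_k$. The base case $k = 1$ is immediate since $\H_1(n) = n$ and $A_1(n) = 2n$, so $\H_1 \simeq A_1$.

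For the inductive step, assume $\H_k \simeq A_k$. The main goal is to establish a recursion
\[
\H_{k+1}(n+1) \ \simeq \ A_k\bigl(\H_{k+1}(n)\bigr),
\]
mirroring the Ackermann identity $A_{k+1}(n+1) = A_k(A_{k+1}(n))$. Once this is in place, a standard induction on $n$ yields $\H_{k+1} \simeq A_{k+1}$, since the $\simeq$--relation behaves well under repeated composition with the superfast--growing $A_k$ (the affine perturbations allowed by the definition of $\simeq$ can be absorbed into a bounded shift of the argument at each level of the iteration).

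To derive the recursion, I would generalise the $\H_2$ and $\H_3$ calculations from the excerpt. Specifically, for each $k$ I would exhibit an explicit parametrised word $V_k(m)$ --- with prototypes $V_1(m) = a_2 a_1^m$ and $V_2(m) = a_2 a_1 \, a_2 a_1^2 \cdots a_2 a_1^m$ --- together with a combinatorial identity, namely (eq4),
\[
\H(a_{k+1}^{n+1}) \ = \ \H(a_{k+1}^n) + c_k + \H\bigl(V_k(\H(a_{k+1}^n))\bigr),
\]
where $c_k$ is a bounded correction. The word $V_k(m)$ would describe the state of the hydra just after the last copy of $a_{k+1}$ is struck. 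Its derivation uses the prefix principle from Proposition~\ref{Hercules always wins}: between successive $a_{k+1}$--strikes, a block of lower-index letters accumulates in a predictable way controlled by the number of strikes already expended. An auxiliary estimate $\H(V_k(m)) \simeq A_k(m)$, proved by induction on $k$ using the nested shape of $V_k$ and the hypothesis $\H_k \simeq A_k$, then upgrades (eq4) to the recursion (phi recursion relation) above.

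The main obstacle will be the combinatorial bookkeeping required to pin $V_k$ down and prove (eq4) in full generality. The growing intricacy of the inter-$a_{k+1}$ blocks --- already nontrivial at $k = 2$, where $\H(V_2(m)) = 3\cdot 2^m - m - 3$ must be computed exactly --- suggests that the cleanest route is to first prove a refined lemma computing $\H$ on a whole parametrised family of words stable under the hydra dynamics, and only then specialise to the $V_k$. With that lemma in hand, the remaining manipulations with $\simeq$ and the Ackermann-style recursion should be routine.
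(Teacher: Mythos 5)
Your outline matches the paper's proof in all essential features: your $V_k(m)$ is (up to the leading letter) the word $\theta^m(a_{k+1})$, whose Hercules-number the paper calls $\phi_{k+1}(m)$; your recursion (eq4) is precisely the paper's Equation~\eqref{eq4}; and your auxiliary estimate $\H(V_k(m)) \simeq A_k(m)$ is Lemma~\ref{phi_k vs Ackermann}, which the paper proves from the self-contained recursion \eqref{phi recursion relation}, $\phi_{k+1}(n+1) = \phi_{k+1}(n) + \phi_k(\phi_{k+1}(n)+n)$. The one place where your plan glosses over real difficulty is the final step, which you declare routine. The statement $\H_{k+1}(n+1) \simeq A_k(\H_{k+1}(n))$ is not well-typed ($\simeq$ compares functions, not values), and if one tries to run the $n$-induction carrying only $\simeq$-data, the multiplicative constants hidden in $\preceq$ threaten to compound over the $n$-fold iteration. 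The paper avoids this by proving sharp two-sided inequalities whose form had to be guessed carefully: the lower bound $\H_k(n) \geq A_k(n)$ closes cleanly, but the upper bound only closes under the strengthened hypotheses $\phi_k(n) \leq A_{k-1}(n+c) - n + k - 2c$ (claim~\eqref{claim}) and $\H_k(n) \leq A_k(n+k) - k$ (claim~\eqref{eq3}), where the subtracted terms are exactly what is needed to absorb the additive drift in the recursion; this in turn rests on the catalogue of Ackermann inequalities in Lemma~\ref{Properties of A_k}. So your plan is correct in structure, but the ``bounded shift of the argument'' you invoke is not free --- discovering the right corrective terms in the inductive hypothesis is the substantive remaining work, not bookkeeping. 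One further small point: you propose to prove $\H(V_k(m)) \simeq A_k(m)$ using the hypothesis $\H_k \simeq A_k$, but it is cleaner (and this is what the paper does) to prove the $\phi_k$-bound first and independently, directly from its own recursion, and only afterwards feed it into the bound on $\H_k$ via~\eqref{eq4}.
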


Other hydra dwell in the mathematical literature, particularly in the context of results concerning independence  from  Peano arithmetic and other logical systems.    The hydra of Kirby and Paris \cite{Kirby-Paris}, based on finite rooted trees, are particularly celebrated.   Similar, but yet more extreme hydra were later constructed by Buchholz \cite{Buchholz}.  And creatures that, like ours, are finite strings that  regenerate on \emph{decapitation}  were defined by Hamano and Okada \cite{Hamano-Okada} and then independently by Beklemishev~\cite{Beklemishev}. They go by the name of \emph{worms}, are descended from Buchholz's hydra, involve more complex regeneration rules, and withstand Hercules even longer.

\subsection{Wild subgroup distortion} \label{wild distortion}

The distortion function $\Dist^{G}_{H} : \N \to \N$ for a subgroup $H$ with finite generating set $T$ inside a group $G$ with finite generating set $S$ compares the intrinsic word metric $d_T$ on $H$ with the extrinsic word metric $d_S$: 
$$\Dist^{G}_{H}(n) \ := \  \max \set{  \  d_T(1,g)  \  \mid  \   g \in {H} \textup{ with }  d_S(1,g)  \leq n   \ }.$$  Up to $\simeq$ it is does not depend on the particular finite generating sets used.

A manifestation of our Hercules--versus--the--hydra battle leads to the result that even for apparently benign $G$ and $H$, distortion can be wild.

 \begin{thm} \label{main}
For each integer $k \geq 1$, there is a finitely generated group $G_k$ that
\begin{itemize}
\item is free--by--cyclic,
\item can be presented with only one defining relator,
\item is $\textup{CAT}(0)$,
\item is biautomatic,
\item and enjoys the rapid decay property,
\end{itemize}
and yet has a rank--$k$ free subgroup $H_k$ that is distorted like the $k$--th of Ackermann's functions --- that is, $\Dist^{G_k}_{H_k}  \simeq A_k$.
\end{thm}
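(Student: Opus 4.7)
The plan is to realise $G_k$ as the free-by-cyclic group $F_k \rtimes_\theta \Z$, where $F_k = \langle a_1, \ldots, a_k \rangle$ and $\theta$ is the \emph{hydra automorphism} $\theta(a_1) = a_1$, $\theta(a_i) = a_i a_{i-1}$ for $2 \le i \le k$. Its natural presentation
\[
\langle a_1, \ldots, a_k, t \mid [a_1, t],\ t^{-1} a_i t = a_i a_{i-1}\ (2 \le i \le k) \rangle
\]
has $k$ relators, but iterated Tietze moves reduce this to one: from $t^{-1} a_i t = a_i a_{i-1}$ one extracts $a_{i-1} = a_i^{-1} t^{-1} a_i t$, so descending from $i = k$ expresses every $a_i$ with $i < k$ as a word $w_i(a_k, t)$, and substituting these into $[a_1, t] = 1$ gives a single defining relator on generators $\{a_k, t\}$. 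The remaining hypotheses are standard consequences of $G_k$ being free-by-cyclic with lower-triangular, polynomially-growing monodromy: the $\CAT(0)$ structure comes from an explicit nonpositively curved piecewise Euclidean 2-complex with $\pi_1 = G_k$, biautomaticity follows from results of Gersten--Short and Bridson--Vogtmann on free-by-cyclic groups with polynomially growing monodromy, and the rapid decay property follows from biautomaticity together with polynomial bounds on the conjugation action of $t$.

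The crucial step will be selecting a rank-$k$ free subgroup $H_k \le G_k$ whose generators encode the hydra dynamics. Note that $F_k$ itself is only polynomially distorted in $G_k$, since $\theta$ grows polynomially of degree $k-1$ and finitely generated subgroups of free groups are undistorted; hence $H_k$ cannot lie inside $F_k$. I would therefore take $H_k = \langle h_1, \ldots, h_k \rangle$ for carefully chosen elements $h_i$ that combine the $a_i$ with powers of $t^{\pm 1}$, so that the interaction with the relations $t^{-1} a_i t = a_i a_{i-1}$ sets up, for each $n$, an element $g_n \in H_k$ of $G_k$-length $O(n)$ whose image in $G_k$ unfolds under the defining relations into precisely the hydra battle starting from $a_k^n$. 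Freeness of $H_k$ will be verified by a Stallings folding argument on the Cayley graph. The resulting lower bound $A_k \preceq \Dist^{G_k}_{H_k}$ then drops out: any reduced expression for $g_n$ in the $h_i$-generators must contain at least $\simeq \H_k(n)$ letters, and by Proposition~\ref{prop2} this is $\simeq A_k(n)$.

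For the upper bound $\Dist^{G_k}_{H_k} \preceq A_k$, I would take any length-$n$ word in the $G_k$-generators representing an element of $H_k$, push all $t^{\pm 1}$ letters to one end using the defining relations --- each push replacing a letter $a_i$ by $\theta^{\pm 1}(a_i)$ --- and invoke the hydra analysis of Section~\ref{hydra functions versus Ackermann's functions} to bound the resulting $F_k$-word length by $A_k(O(n))$; a subsequent linear blow-up would re-express this word in the $h_i$-generators. The hard part will be the lower bound: one must show that the explicit hydra-encoding element $g_n$ cannot be represented in $H_k$ by any word substantially shorter than $\H_k(n)$, thereby ruling out unforeseen cancellations among the $h_i$ that would shortcut the hydra recursion, and one must verify that the hydra battle combinatorics genuinely force the nested recursion of Proposition~\ref{prop2} into the distortion function. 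Much of the technical effort goes into choosing $H_k$ and its generators so that this tight match between upper and lower bound is realised; the rest of the theorem is a carefully packaged translation of the combinatorics of Hercules' fight into the geometry of $G_k$.
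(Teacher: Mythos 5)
The overall skeleton is right: you have identified the correct group $G_k = F_k \rtimes_\theta \Z$ with the hydra automorphism, the correct one-relator reduction, and the correct high-level idea that the hydra recursion should drive the distortion. But several steps as sketched would not go through.

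The most serious gap is your upper bound. Pushing all $t^{\pm1}$ to the left turns a length-$n$ word into $t^r w$ with $w = w(a_1, \ldots, a_k)$, but $\ell(w)$ is only \emph{polynomial} in $n$ (roughly $n^k$, since $\theta$ is polynomially growing), not $A_k(O(n))$ --- Section~\ref{hydra functions versus Ackermann's functions} concerns $\H(\cdot)$ and $\phi_k$, not $F_k$-word lengths, and cannot be ``invoked'' at this point. The Ackermannian blow-up happens in the step you describe as ``a subsequent linear blow-up'': re-expressing $t^r w$ as a word on the $H_k$-generators requires carrying the power of $t$ from left to right through $w$, peeling off one $t$ per letter $a_i$ to form $a_i t$, while the residual power of $t$ mutates and each letter still to the right is acted on by a changing power of $\theta$. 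Controlling this recursion --- both the length of the output word and the size of the intermediate $t$-exponents --- is exactly what Sections~\ref{Upper bound preliminaries1}--\ref{DistortionUpper} of the paper are for (the functions $\psi$, $\Psi$, $\kappa$, $K$ and the nested recursion of Proposition~\ref{exponent and length bounds}). Calling it a linear blow-up misplaces the entire difficulty of the theorem. A related issue: you must first determine \emph{whether} $t^r w$ lies in some coset $H t^s$, which is not automatic (Lemma~\ref{Key Lemma} and the analysis of the sets $S_k$).

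Your lower bound sketch also misses a key manoeuvre. The natural witness ${a_k}^n t^{\H_k(n)}$ is indeed length $\H_k(n)$ in $H_k = \langle a_1 t, \ldots, a_k t\rangle$ (Lemma~\ref{big triangle}), but its $G_k$-length is about $n + \H_k(n)$, which is itself Ackermannian --- so it gives no distortion at all. The paper's trick is to conjugate: since $a_1 t$ and $t a_1$ commute with ${a_2}^{-1}{a_k}^{-n} u_{k,n} a_2$, the element ${a_k}^n a_2\, t a_1\, {a_2}^{-1} {a_k}^{-n}$ has $G_k$-length $2n+4$ yet equals a reduced $H_k$-word of length $2\H_k(n)+3$. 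Your proposal says you want $g_n$ of $G_k$-length $O(n)$ but gives no mechanism for producing one.

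Two further points. Freeness of $H_k$: a Stallings folding argument is a tool for subgroups of free groups; $H_k \le G_k$ is a subgroup of a free-by-cyclic group, and the paper instead runs a bespoke induction (Proposition~\ref{prop5}) exploiting the normal form $v t^r$ and positivity. You would need to explain how folding applies here. Biautomaticity and rapid decay: there is no general result that free-by-cyclic groups with polynomially growing monodromy are biautomatic, and rapid decay does not follow from biautomaticity. The paper instead builds an explicit $\CAT(0)$ squared presentation (via the substitution $\alpha_i = u_{k-i}$) and cites Niblo--Reeves for biautomaticity of groups acting geometrically on $\CAT(0)$ cube complexes, and Jolissaint for rapid decay. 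Your citations as stated would not close these bullet points.
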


This distortion of a free subgroup of a  $\textup{CAT}(0)$ group stands in stark contrast to that of any abelian subgroup --- they are always quasi--isometrically embedded  (see Theorem~4.10 of Chapter III.$\Gamma$ in \cite{BrH}, for example) and so no more than linearly distorted.
The distortion we achieve exceeds that found in the hyperbolic groups of Mitra~\cite{Mitra} and the subsequent 2-dimensional $\textup{CAT}(-1)$ groups  of Barnard, Brady and Dani \cite{BBD}: first, for all $k$, they give examples with  a free subgroup of  distortion $\simeq \exp^{(k)}(n)$, and then they give examples with free subgroups whose distortion functions grow faster than  $\exp^{(k)}(n)$ for every $k$.  However, our examples contain $\Z^2$ subgroups and so are not hyperbolic.

Explicitly, our examples are
\begin{equation} \label{our groups}
G_k \ = \ \langle \  a_1, \ldots, a_k,  t \  \mid \  t^{-1} {a_1} t=a_1, \ t^{-1}{a_i} t=a_i a_{i-1} \ (\forall i>1)   \ \rangle\end{equation}
and their subgroups $$H_k :=  \langle a_1t, \ldots, a_kt \rangle.$$
So $G_k$ is the free--by--cyclic group $F(a_1, \ldots, a_k) \rtimes \Z$ where $\Z= \langle t \rangle$ and $t$ acts by the automorphism  of $F(a_1, \ldots, a_k)$ that is the restriction of the automorphism $\theta$ of $F(a_1, a_2, \ldots)$
defined by
\begin{align}
    \theta(a_i) \ = \ \begin{cases}
      a_1 \quad & i=1, \\
      a_i a_{i-1} \quad & i > 1.
    \end{cases} \label{theta definition}
  \end{align}

For $i \leq j$, the canonical homomorphism $G_i \to G_j$ is an inclusion  as  the free--by-cyclic normal forms of an element of $G_i$ and its image in $G_j$ are the same.   So the direct limit of the $G_i$ under these inclusions is
$$G  \ = \ \langle \  t, a_1, a_2, \ldots    \  \mid \  t^{-1} {a_1} t=a_1, \ t^{-1}{a_i} t=a_i a_{i-1} \ (\forall i>1)   \ \rangle.$$
 Also, the subgroup $H :=  \langle a_1t, a_2t, \ldots \rangle$ of $G$ is $\displaystyle{\lim_{\longrightarrow} H_i}$ and $H_k = G_k \cap H$.  
 
 Our convention is that $[a,b]= a^{-1}b^{-1}ab$.   By re--expressing the original relations as $[a_1,t]=1$ and $a_{i-1} = [a_i,t]$ for $i>1$  and then eliminating $a_1, \ldots, a_{k-1}$ and defining $a:= a_k$,
one can  present $G_k$ with one relation, a nested  commutator, known as an \emph{Engel} relation:
$$G_k \ \cong \  \langle \   a,  t \  \mid \  [a, \underbrace{t, \ldots, t}_k ]  =1 \ \rangle.$$  
That is, the relation is $v_k=1$ where $v_k$ is the word  defined recursively by $v_0 = a$ and $v_{k+1} = [v_k,t]$ for $k \geq 0$.

Recursively define   a family of words by $u_0= a$ and $u_{k+1} = {u_k}^{-1}s{u_k}$ for $k \geq 0$.
  By inducting on $k$, one can verify that after substituting $t^{\pm 1}$ for every $s^{\mp 1}$ in $u_k$, the words $t^{-(k-1)} u_k t^k$ and $v_k$ become freely equal for all $k \geq 1$.
 So the relation $v_k=1$ can be replaced by $u_k =s$ to give  an alternative one--relator presentation for $G_k$:  $$G_k  \ \cong \     \left\langle \left. \rule{0mm}{5mm}    \   a,  s \  \right| \   \underbrace{\mbox{$s$\raisebox{4pt}{\reflectbox{$\ddots$}} \!\raisebox{14pt}{$s$}}}_k\!\raisebox{19pt}{$a$}  =  s \  \right\rangle.$$

This presentation can be re--expressed via $\alpha_i := u_{k-i}$ for $1 \leq i \leq k$  as
$$G_k  \ \cong \   \left\langle \     \alpha_1, \ldots, \alpha_k, s \  \left| \      {\alpha_1}^{-1}s{\alpha_1} =s, \  {\alpha_i}^{-1} s {\alpha_i} =\alpha_{i-1} \   ( i >1)  \ \right. \right\rangle.$$ By checking the link condition (see, for example, \cite[II.5.24]{BrH}) one finds that the Cayley 2-complex of this presentation (that is, the universal cover of the associated  presentation 2--complex), metrized so that each 2-cell is a Euclidean square, is  $\textup{CAT}(0)$.   Gersten \& Short \cite{GerstenShort2} proved that all such groups are automatic, and later Niblo \& Reeves~\cite{NR} proved that a more general class of groups, those acting geometrically on $\textup{CAT}(0)$ cube complexes, are biautomatic.

The groups $G_k$ enjoy  the property of \emph{rapid decay} as a consequence of \cite[Corollary~2.1.10]{Jolissaint}.  

We remark that a corollary of our recursive upper bound on $\Dist^{G_k}_{H_k}$ is that the membership problem for $H_k$ in $G_k$ is decidable. 

The family $G_k$ have received attention elsewhere.   In \cite{Gersten12} Gersten showed the group $G_2$   to be  $\textup{CAT}(0)$ with quadratic divergence  function.  He gave the free--by--cyclic, the one--relator, and the $\textup{CAT}(0)$ presentations of $G_2$ we have described.  In  \cite{Macura2} Macura shows $G_3$ to be $\textup{CAT}(0)$ and proves that an associated  $\textup{CAT}(0)$ complex  has a cubic divergence function.  Results in \cite{Macura2} imply that the universal cover of the divergence function of the mapping torus associated to the free--by--cyclic presentation of $G_k$ is polynomial  of degree $k-1$ (up to $\simeq$) and in  \cite{Macura4} Macura proves the same result for $\textup{CAT}(0)$ spaces associated to each $G_k$.   Macura also mentions $G_2$ and  $G_3$  in \cite{Macura1} as examples in the context of Kolchin maps and quadratic isoperimetric functions, and she and Cashen use $G_k$ as examples in \cite{CM} when studying novel   quasi--isometry invariants they call \emph{line patterns}.  It is stated in \cite{BBMS} (Example 4) that $G_3$ is biautomatic.
Bridson uses $G_k$ in \cite{Bridson2} as a starting point to construct  free--by-free groups with Dehn functions that are polynomial of degree $k+1$ and he shows them to be subgroups of $\textup{Out}(F_n)$ for suitable $n$.   Additionally he shows his example are asynchronously automatic via  normal forms which have length $\simeq n^k$,  but   by no shorter normal form.     En route he shows (Section 4.1(3)) that free--by--cyclic $F_k \rtimes \Z$  groups, such as $G_k$, embed in $\textup{Aut}(F_k)$.

Examples of yet more extreme distortion are known, even for subgroups of hyperbolic groups.  Arzhantseva \& Osin~\cite[\S3.4]{AO} and Pittet~\cite{PittetThesis} explain an argument  attributed to Sela in \cite[\S3, $3.K_3''$]{Gromov}: the Rips construction, applied to a finitely presentable group with unsolvable word problem yields a hyperbolic (indeed, $C'(1/6)$ small--cancellation) group $G$ with a finitely generated subgroup $N$ such that $\Dist^G_N$ is not bounded above by any recursive function.  The reason is that when $N$ is a finitely generated normal subgroup of a finitely presented group $G$,  there is an upper bound for the Dehn function of  $G/N$ in terms of the Dehn function of $G$ and the distortion of $N$ in $G$  --- see~\cite[Corollary~8.2]{Farb}, \cite{PittetThesis}.   Ol'shanskii \& Sapir in  \cite[Theorem~2]{OS} provide  another source of extreme examples --- using  Mikhailova's construction as their starting point, they show that the set of distortion functions of finitely generated subgroups of $F_2 \times F_2$ coincides (up to $\simeq$) with the set of Dehn functions of finitely presented groups.   As for finitely presented subgroups,   Baumslag, Bridson, Miller and Short \cite{BBMS2} explain how to construct groups $\Gamma$ that are both  $\textup{CAT}(0)$ and hyperbolic  and yet such that $\Gamma \times \Gamma$ has a finitely presented subgroup whose distortion is not bounded above by any recursive function. 

We are not aware of any systematic study of subgroup distortion in one--relator groups.  It seems natural to ask whether our examples are best--possible --- that is, whether there is a one--relator group with a finite--rank free subgroup of distortion $\succeq A_k$ for every $k$. 

\subsection{Extreme Dehn functions}

The Dehn function $\Area(n)$ of a finitely presented group $\langle A \mid R \rangle$ is related to the group's word problem in that $\Area(n)$ is the minimal $N$ such that given any word $w$ of length at most $n$ that represents the identity, $w$ freely equals some product $\prod_{i=1}^{N'} {u_i}^{-1} r_i u_i$ of $N' \leq N$ conjugates of relators $r_i \in R^{\pm 1}$, or, equivalently,  one can reduce $w$ to the empty word  by applying defining relations at most $N$ times and removing or inserting inverse pairs of letters.  At the same time, the Dehn function is a natural geometric invariant (in fact,  a quasi--isometry invariant up to $\simeq$) of a group: $\Area(n)$ is the minimal $N$ such that any edge--loop of length at most $n$ in the Cayley 2-complex of $\langle A \mid R \rangle$ can be spanned by a combinatorial filling disc (a van Kampen diagram) with area (that is, number of 2-cells) at most $N$.   This geometric perspective is related to the classical notion of an isoperimetric function in Riemannian geometry in that if  $\langle A \mid R \rangle$ is the fundamental group of a closed Riemannian manifold $M$, then its Dehn function is $\simeq$--equivalent to the minimal isoperimetric function of the universal cover of $M$.

Theorem~\ref{main} leads to strikingly simple examples of finitely presented groups with huge Dehn functions, namely the HNN--extensions of $G_k$ with stable letter commuting with all elements of the subgroup $H_k$.
\begin{thm} \label{Dehn function}
For $k \geq 2$, the Dehn function of the group
 $$\Gamma_k  \  := \ \langle  \ a_1, \ldots, a_k, t, p \  | \  t^{-1} {a_1} t = a_1, \ t^{-1} {a_i} t = a_i a_{i-1} \  (i > 1), \  [p, a_i t] =1 \  (i > 0) \  \rangle.$$
is $\simeq$--equivalent to $A_k$.   
\end{thm}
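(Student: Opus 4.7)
The plan is to combine Theorem~\ref{main} with the standard ``distortion controls Dehn function of an HNN extension with commuting stable letter'' yoga (in the spirit of Farb's lemma \cite[Corollary~8.2]{Farb}, or Bridson--Haefliger-style corridor arguments). Since $\Gamma_k$ is an HNN extension of $G_k$ with stable letter $p$ commuting with the generators $a_1t,\dots,a_kt$ of $H_k$, and since $\Dist^{G_k}_{H_k} \simeq A_k$, both the upper and lower bounds on $\Area_{\Gamma_k}$ should reduce to this distortion estimate. The Dehn function of $G_k$ itself is at most quadratic (it is $\textup{CAT}(0)$), which is negligible compared to $A_k$.

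For the upper bound, I would argue by iterated $p$-pinching.  Let $w$ be a null-homotopic word in the generators of $\Gamma_k$ with $|w|\leq n$.  If $w$ contains no $p^{\pm1}$, then it is a null-homotopic word in $G_k$ and can be filled using $\preceq n^2$ relators of $G_k$.  Otherwise, by Britton's lemma there is an innermost subword $p^{\varepsilon} u p^{-\varepsilon}$ with $u$ a word of length $\leq n$ in $a_1,\dots,a_k,t$ representing an element $h\in H_k$.  Since $d_{G_k}(1,h)\leq n$, Theorem~\ref{main} gives a word $u'$ in the letters $(a_it)^{\pm1}$ of length $\preceq A_k(n)$ with $u'=_{G_k}h$.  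Rewriting $u$ as $u'$ costs $\preceq A_k(n)^2$ applications of $G_k$-relators (using the quadratic Dehn function of $G_k$), and then pushing $p^{\varepsilon}$ through $u'$ via the commuting relations $[p,a_it]=1$ costs $\preceq A_k(n)$ further relators.  After this pinch the number of $p$'s has decreased by two, so we iterate at most $n/2$ times.  Since $A_k$ grows faster than any polynomial for $k\geq 2$, the total area is $\preceq A_k(n)$.

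For the lower bound, I would use a $p$-corridor argument.  By the definition of distortion, for each $n$ there is a word $v_n$ of length $\simeq n$ in $a_1,\dots,a_k,t$ representing an element $h_n\in H_k$ with $d_{H_k}(1,h_n)\succeq A_k(n)$.  Consider the null-homotopic word
\[
w_n \ := \ p^{-1}\,v_n^{-1}\,p\,v_n
\]
of length $\simeq n$ in the generators of $\Gamma_k$.  In any van Kampen diagram $\Delta$ over the given presentation with boundary $w_n$, the two boundary occurrences of $p^{\pm1}$ must be joined by a $p$-corridor: a strip of $2$-cells coming from the relations $[p,a_it]=1$ (no other relator involves $p$).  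The side of this corridor facing one of the boundary arcs spells a word $\sigma$ in the letters $(a_it)^{\pm1}$ with $\sigma =_{G_k} h_n$.  Hence $|\sigma|\geq d_{H_k}(1,h_n)\succeq A_k(n)$, and the number of squares in the corridor, which is at most $\Area(\Delta)$, is at least $|\sigma|$.  Therefore $\Area_{\Gamma_k}(w_n)\succeq A_k(n)$, giving the matching lower bound $A_k \preceq \Area_{\Gamma_k}$.

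The main obstacle is ensuring that the corridor argument really recovers the full distortion.  One needs to verify that $p$-edges in a diagram cannot ``annihilate'' internally (they are never both sides of a single relator except in a $[p,a_it]$ square, so each $p$-edge lies on a unique two-sided corridor connecting boundary $p$'s in pairs), and that the word read along the corridor side is literally in the generators of $H_k$, not in a mixed alphabet.  Both facts follow from the observation that $p$ only appears in the relators $[p,a_it]=1$, which are squares whose vertical sides are labelled $p$ and whose horizontal sides are labelled $a_it$. Modulo this verification, the proof is complete.
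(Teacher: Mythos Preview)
Your overall strategy matches the paper's: both identify $\Gamma_k$ as an HNN extension of $G_k$ with stable letter $p$ centralising $H_k$, use the quadratic Dehn function of $G_k$ (from its being $\textup{CAT}(0)$ and non--hyperbolic), and deduce the Dehn function from $\Dist^{G_k}_{H_k}\simeq A_k$.  The paper simply invokes \cite[III.$\Gamma$.6.20]{BrH} for both bounds and then uses $A_k(n)^C\simeq A_k(n)$; your lower bound is that same $p$--corridor argument written out, and it is correct.

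There is, however, a genuine gap in your upper--bound iteration.  After the first pinch you replace the innermost $p^{\varepsilon}u\,p^{-\varepsilon}$ (with $|u|\le n$) by a word $u'$ of length $\preceq A_k(n)$, so the ambient word now has length of order $A_k(n)$.  At the next step the innermost segment need \emph{not} have length $\le n$, and the cost of that pinch is governed by $A_k$ applied to its length.  Iterated $n/2$ times this yields a bound of order $A_k^{(n/2)}(n)\simeq A_{k+1}(n)$, not $A_k(n)$.

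The fix is to track $d_{G_k}(1,h)$ rather than the word length of $u$.  The retraction $\pi:\Gamma_k\to G_k$, $p\mapsto 1$, shows that at every stage the element $h$ carried by the innermost segment equals, in $G_k$, the corresponding subarc of the \emph{original} boundary word with its $p$--letters deleted; hence $d_{G_k}(1,h)\le n$ throughout, and every $u'$ may be chosen of length $\le \Dist^{G_k}_{H_k}(n)\preceq A_k(n)$.  The ambient word then grows by at most $A_k(n)$ per pinch, so stays $\preceq nA_k(n)$, and each rewriting costs $\preceq (nA_k(n))^2$ by the quadratic Dehn function of $G_k$.  Summing over at most $n/2$ pinches gives total area $\preceq n^3A_k(n)^2\simeq A_k(n)$.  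This is exactly the content of the Bridson--Haefliger estimate the paper cites.
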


So, together with $\Gamma_1$, which has Dehn function $\simeq$--equivalent to  $n \mapsto n^2$ (see Proposition~\ref{quadratic Dehn function}),  these groups have Dehn functions that are representative  of each graduation of the Grzegorczyk hierarchy of primitive recursive functions.  Details of the proof are in Section~\ref{Big Dehn}.

These are not the only such examples  (but we believe they are the first that are explicit and elementary):
Cohen, Madlener and Otto \cite{CohenWisdom, CMO, MO} embedded algorithms  (\emph{modular Turing machines}, in fact) with running times like $n \mapsto A_k(n)$ in groups so that the running of the algorithm is displayed in van~Kampen diagrams so as to make the Dehn function reflect  the time--complexity of the algorithms.
 They state that their techniques produce yet more extreme examples as they also apply to an algorithm with running time like $n \mapsto A_n(n)$, and so yield a group with Dehn function that is recursive but not primitive recursive.
More extreme still, any finitely presentable group with undecidable word problem is not bounded above by any recursive function.

Elementary examples of groups with large Dehn function are described by Gromov in \cite[\S4]{Gromov}, but their behaviour is not so extreme.  There is the family
$$\langle \  x_0, \ldots, x_k  \ \mid \  {x_{i+1}}^{-1}{x_i}{x_{i+1}} = {x_i}^2 \ (i<k)  \  \rangle,$$
which has Dehn function $\simeq$--equivalent to $n \mapsto {\exp_2}^{(k)}(n)$.    [We write $\exp_2(n)$ to denote $2^n$.]
 And  Baumslag's group  \cite{Baumslag2}
 \begin{equation} \label{Baumslag's group}
 \langle \  a,b  \ \mid \   (b^{-1}a^{-1} b)  \, a \, (b^{-1}a b) = a^{2} \  \rangle,
 \end{equation}
which contains $\langle \  x_0, \ldots, x_k  \ \mid \  {x_{i+1}}^{-1}{x_i}{x_{i+1}} = {x_i}^2 \ (i \geq 0)  \  \rangle$ as a normal subgroup,  was shown by Platonov~\cite{Platonov} to have Dehn function $\simeq$--equivalent to  $n \mapsto {\exp}^{(\lfloor\log_2 n \rfloor ) }(1)$.  (Prior partial results towards  this direction are in \cite{Bernasconi, Gersten6, Gersten}.)

\subsection{The organisation of the article.}

We believe the most compelling assertion of Theorem~\ref{main} to be the existence of groups $H_k$ and $G_k$ with $H_k$ free of rank $k$, $G_k$ enjoying the bulleted list of properties, and $\Dist_{H_k}^{G_k}$ bounded below by $A_k$.  In particular, this shows that there is no uniform upper bound on the level in the Grzegorczyk hierarchy at which the functions $\Dist_{H_k}^{G_k}$ appear.  The reader who is primarily interested in these components of Theorem~\ref{main} need only read up to the end of Section~\ref{DistortionLower}.  The contents of this half of the article are as follows.  In Section~\ref{Ackermann section} we derive a collection of elementary properties of the Ackermann functions that will be used elsewhere in the paper.  Section~\ref{hydra functions versus Ackermann's functions} contains a proof of Proposition~\ref{prop2} comparing the hydra functions to Ackermann's functions.  In Section~\ref{freedom} we prove  that the subgroups $H_k$ are free.  And in Section~\ref{DistortionLower} we prove that each function $\Dist_{H_k}^{G_k}$ is bounded below by $\H_k$ --- combining this result with Proposition~\ref{prop2} gives the lower bound $A_k$.

Our proof that each function $\Dist_{H_k}^{G_k}$ lies in the same $\simeq$-equivalence class of functions as $A_k$ --- \ie that $A_k$ is an upper bound for $\Dist_{H_k}^{G_k}$ --- is considerably more involved than that of the lower bound and occupies most of the second half of the article: Sections~\ref{Upper bound preliminaries1}, \ref{Upper bound preliminaries2} and \ref{DistortionUpper}.  In deriving the upper bound, a key notion will be that of passing a power of $t$ through a word $w$ on the letters $a_i$.  We explain this idea in Section~\ref{Upper bound preliminaries1}, where we also identify some recursive structure that will be crucial in facilitating an inductive analysis.  In Section~\ref{Upper bound preliminaries2} we focus in detail on the situation where $w$ is of the form $\theta^n({a_k}^{\pm1})$ and derive preliminary result that will feed into the main proof, presented in Section~\ref{DistortionUpper}, that $\Dist_{H_k}^{G_k} \preceq A_k$.

Finally, in Section~\ref{Big Dehn}, we prove Theorem~\ref{Dehn function}, which gives the Dehn functions of the groups $\Gamma_k$.

We illustrate some of our arguments using van~Kampen diagrams, particularly observing their \emph{corridors} (also known as \emph{bands}).  For an introduction see, for example, I.8A.4 and the proof of Proposition~6.16 in III.$\Gamma$ of \cite{BrH}.    

We denote the length of a word $w$ by $\ell(w)$.  We write $w=w(a_{1}, \ldots, a_k)$ when $w$ is a word on $ {a_{1}}^{\pm 1}, \ldots, {a_k}^{\pm 1}$.

\subsection{Acknowledgements}  We are grateful to Martin~Bridson for a number of conversations on this work, to Indira~Chatterji for pointing out that the groups $G_k$ enjoy the rapid decay property, to Volker~Diekert for a discussion of Ackermann's functions, to Arye~Juhasz for background on one--relator groups, and to John~McCammond for help with some computer explorations of Hercules' battle with the hydra.

\section{Ackermann's functions} \label{Ackermann section}

Throughout this article we will frequently compare functions to Ackermann's functions and will find the following relationships useful.

\begin{lemma} \label{Properties of A_k}
   For integers $k,l, m, n$, the following relations hold within the given domains: \begin{align}
    A_{k}(A_{k+1}(n)) \ &= \ A_{k+1}(n + 1), &&k,n \geq 0, \label{A_k4}\\
    A_k(1) \ &= \ 2, &&k \geq 1, \label{A_k of 1} \\
    A_k(2) \ &= \ 4, &&k \geq 0, \label{A_k of 2} \\
    A_k(n) \ &\leq \ A_{k+1}(n), &&k \geq 1; n \geq 0,  \label{A_k7} \\
    A_k(n) \ &< \ A_k(n+1),  &&k,n \geq 0, \label{A_k5} \\
    n \ &\leq \ A_k(n), &&k,n \geq 0, \label{A_k2} \\
    \intertext{\textup{(}with equality holding in \eqref{A_k2} if and only if $(k,n) = (1,0)$\textup{)}}
     mA_k(n) \ &\leq \ A_k(nm), &&k, n \geq 1; m \geq 0, \label{A_k1} \\
     m {A_k}^{(l)}(n) \ &\leq \ {A_k}^{(l+m)}(n),  &&k \geq 1; l, m, n \geq 0,  \label{A_k6} \\
    A_k(n) + A_k(m) \ &\leq \ A_k(n + m), &&k,n,m \geq 1, \label{A_k3} \\
    A_k(n) + m \ &\leq \ A_k(n+m), &&k,n,m \geq 0, \label{A_k9} \\
    (A_k(n))^m \ &\leq \ A_k(nm), &&k \geq2; n, m \geq 0.  \label{A_k8}
  \end{align}
\end{lemma}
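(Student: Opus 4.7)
The plan is to prove the eleven relations in an order that respects their logical dependencies, in each case by a straightforward induction on $n$, $k$, or $m$. The chief difficulty is bookkeeping: ensuring each step appeals only to already established relations and correctly handling the small base cases, especially the anomaly $A_1(0) = 0$.

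I would begin with the identities. Relation \eqref{A_k4} is merely a restatement of the defining recursion, and the values \eqref{A_k of 1}, \eqref{A_k of 2} follow by induction on $k$ from $A_k(1) = A_{k-1}(A_k(0)) = A_{k-1}(1)$ and $A_k(2) = A_{k-1}(A_k(1)) = A_{k-1}(2)$. Strict monotonicity \eqref{A_k5} and the bound \eqref{A_k2} would then be proved together by induction on $k$ with inner induction on $n$: the step $A_k(n+1) = A_{k-1}(A_k(n)) > A_k(n)$ works whenever $A_k(n) \geq 1$, and the sole failure, $(k,n) = (1,0)$, is precisely the tabulated exception. The comparison \eqref{A_k7} then follows by induction on $n$ using monotonicity of $A_k$ together with \eqref{A_k4}: $A_k(n+1) \leq A_k(A_{k+1}(n)) = A_{k+1}(n+1)$.

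Next, the additive and multiplicative bounds. Relation \eqref{A_k9} comes directly from iterating strict monotonicity. For \eqref{A_k3}, the case $k=1$ is equality, and for $k \geq 2$ I would invoke the auxiliary bound $A_k(p+1) \geq 2A_k(p)$ (a consequence of $A_{k-1}(M) \geq 2M$, itself from \eqref{A_k7} and $A_1(M) = 2M$) to dominate $A_k(n) + A_k(m) \leq 2A_k(\max\{n,m\}) \leq A_k(\max\{n,m\}+1) \leq A_k(n+m)$. Then \eqref{A_k1} follows by induction on $m$ via $(m+1)A_k(n) \leq A_k(mn) + A_k(n) \leq A_k(mn+n)$ using \eqref{A_k3}, and \eqref{A_k6} comes from the same inequality $A_k(M) \geq 2M$, giving $A_k^{(l+m)}(n) \geq 2^m A_k^{(l)}(n) \geq m A_k^{(l)}(n)$.

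Finally, for the power bound \eqref{A_k8} I would first establish the intermediate product super-additivity $A_k(p+q) \geq A_k(p) \cdot A_k(q)$ for $k \geq 2$ and $p,q \geq 0$, by induction on $q$. The step is
\[
A_k(p+q+1) \ = \ A_{k-1}(A_k(p+q)) \ \geq \ A_{k-1}(A_k(p) A_k(q)) \ \geq \ A_k(p) \cdot A_{k-1}(A_k(q)) \ = \ A_k(p) A_k(q+1),
\]
the penultimate inequality being \eqref{A_k1} applied to $A_{k-1}$. Iterating this product inequality $m$ times yields $(A_k(n))^m \leq A_k(nm)$, which is \eqref{A_k8}.
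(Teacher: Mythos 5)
Your proof is correct, and it follows a genuinely different route from the paper's for about half of the relations. The paper proves the non-strict forms of \eqref{A_k7}, \eqref{A_k5} and \eqref{A_k2} by a single simultaneous induction on $(k,n)$ and then extracts strictness afterwards, whereas you decouple them: prove \eqref{A_k5} and \eqref{A_k2} together, then deduce \eqref{A_k7} directly from $A_k(n+1) \leq A_k(A_{k+1}(n)) = A_{k+1}(n+1)$. The sharper divergence is in the multiplicative bounds. The paper proves \eqref{A_k1} first, by a recursion-unwinding induction on $(k,n)$ that never touches \eqref{A_k3}; it then proves \eqref{A_k3} by an induction on $k$ that manipulates the iterated forms ${A_{k-1}}^{(n)}(1)$; and it proves \eqref{A_k8} by yet another induction on $k$. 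You instead introduce the auxiliary doubling bound $A_{k}(p+1)\geq 2A_k(p)$ (via \eqref{A_k7} and $A_1(M)=2M$), get \eqref{A_k3} and \eqref{A_k6} almost for free from it, recover \eqref{A_k1} by a one-line induction on $m$ through \eqref{A_k3}, and replace the paper's \eqref{A_k8} argument with the clean super-multiplicativity lemma $A_k(p+q)\geq A_k(p)A_k(q)$. Your route is more modular and makes the doubling and super-multiplicativity structure of $A_k$ explicit; the paper's route keeps the dependency graph thinner (its \eqref{A_k1} needs nothing beyond \eqref{A_k4} and \eqref{A_k5}) and stays uniformly within inductions on $(k,n)$. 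One small wording caveat: where you say ``the sole failure, $(k,n)=(1,0)$,'' note that this is the unique equality case of \eqref{A_k2}, but \eqref{A_k5} in fact holds strictly there ($A_1(1)=2>0=A_1(0)$); moreover, since $A_{k-1}(m)>m$ can only fail at $(k-1,m)=(1,0)$ and $A_2(n)=2^n$ is never $0$, the inductive step for \eqref{A_k5} never actually degenerates for $k\geq2$ — the real base cases to dispose of directly are $k\in\{0,1\}$, which the explicit formulas handle.
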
 

\begin{proof}
  Equation \eqref{A_k4} follows immediately from the definition of the Ackermann functions.  Equations \eqref{A_k of 1} and \eqref{A_k of 2} follow from \eqref{A_k4} by an easy induction on $k$.

  Before proving \eqref{A_k7}, \eqref{A_k5} and \eqref{A_k2}, we first prove that non--strict versions of these inequalities hold.  The proof is by induction on $k$ and $n$.  It is easy to check that \eqref{A_k7} holds if $k=1$ or if $n=0$ and that \eqref{A_k5} and \eqref{A_k2} hold if $k=0$, if $k=1$ or if $n=0$.  Now let $k' >1$ and $n' > 0$ and suppose, as an inductive hypothesis, that \eqref{A_k7}, \eqref{A_k5} and \eqref{A_k2} hold (not necessarily strictly) if $k <k'$ or if $k=k'$ and $n<n'$.  We prove that the inequalities hold if $k=k'$ and $n=n'$.  For \eqref{A_k7}, we calculate that $A_{k'}(n') = A_{k'-1}(A_{k'}(n'-1)) \leq A_{k'-1}(A_{k'+1}(n'-1)) \leq A_{k'}(A_{k'+1}(n'-1)) = A_{k'+1}(n')$, where we have applied \eqref{A_k4} and the inductive hypothesis versions of \eqref{A_k7} and \eqref{A_k5}.  For \eqref{A_k5}, we calculate that $A_{k'}(n') \leq A_{k'-1}(A_{k'}(n')) = A_{k'}(n'+1)$, where we have used \eqref{A_k4} and the inductive hypothesis version of \eqref{A_k2}.  For \eqref{A_k2}, we calculate that $n' \leq 2n' = A_1(n') \leq A_{k'}(n')$, where we have used the inductive hypothesis version of \eqref{A_k7}.  This completes the proof that \eqref{A_k7}, \eqref{A_k5} and \eqref{A_k2} hold in non-strict form.  Now observe that equality in \eqref{A_k2} at $(k,n) = (k', n')$ requires $n' = 2n'$, whence $n'=0$.  Since $A_k(0)=1$ for all $k \geq 2$, equality in \eqref{A_k2} holds if and only if $(k,n) = (1,0)$.  It follows that equality in \eqref{A_k5} at $(k,n) = (k', n')$ would require that $A_{k'}(n') =0$ and $k'-1 = 1$, whence $A_2(n') = 0$.  But $A_2(n) = 2^n >0$ for all $n$ and so the inequality \eqref{A_k5} is strict.

  We now prove inequality \eqref{A_k1}.  This clearly holds if $m=0$, so suppose that $m \geq 1$.  The proof is by induction on $k$ and $n$.  It is clear that \eqref{A_k1} holds if $k=1$.  The inequality also holds if $n=1$ since, applying \eqref{A_k of 1} and \eqref{A_k7}, we calculate that $mA_k(1) = 2m = A_1(m) \leq A_k(m)$.  Now let $k', n' > 1$ and suppose, as an inductive hypothesis, that \eqref{A_k1} holds if $k<k'$ or if $k=k'$ and $n <n'$.  We calculate that $mA_{k'}(n') = m A_{k'-1}(A_{k'}(n'-1)) \leq A_{k'-1}(mA_{k'}(n'-1)) \leq A_{k'-1}(A_{k'}(mn'-m)) \leq A_{k'-1}(A_{k'}(mn'-1)) = A_{k'}(mn')$, where we have used \eqref{A_k4} and \eqref{A_k5}.  Thus the inequality holds if $(k,n) = (k',n')$, completing the proof of \eqref{A_k1}.

  For inequality \eqref{A_k6} observe that, by \eqref{A_k2}, $m {A_k}^{(l)}(n) \leq A_{k+1}(m) {A_k}^{(l)}(n) = {A_k}^{(m)}(1) {A_k}^{(l)}(n)$.  It also follows from \eqref{A_k2} that ${A_k}^{(i)}(1) \geq 1$ for all $i \geq 0$.  We can thus apply \eqref{A_k1}, together with \eqref{A_k5}, to show that ${A_k}^{(m)}(1) {A_k}^{(l)}(n) \leq {A_k}^{(m)}({A_k}^{(l)}(n)) = {A_k}^{(l+m)}(n)$.

  We prove \eqref{A_k3} by induction on $k$.  It is clear that the inequality holds if $k=1$, so suppose that $k >1$ and that the result is true for smaller values of $k$.  Without loss of generality suppose that $n \leq m$.  It follows from \eqref{A_k2} that ${A_{k-1}}^{(i)} \geq 1$ for all $i \geq 0$, and so we can apply the induction hypothesis to calculate that $A_k(n) + A_k(m) = {A_{k-1}}^{(n)}(1) + {A_{k-1}}^{(m)}(1) \leq {A_{k-1}}^{(n)}(1 + {A_{k-1}}^{(m-n)}(1)) = {A_{k-1}}^{(n)}(1 + A_k(m-n))$.  Applying \eqref{A_k5} gives that this quantity is at most ${A_{k-1}}^{(n)}(A_k(m-n+1)) = A_k(m+1) \leq A_k(m+n)$.

  We now prove inequality~\eqref{A_k9}.  This clearly holds if $k=0$, $k=1$ or $m=0$.  If $k \geq 2$ and $n=0$, then $A_k(n) + m = m+1 \leq A_k(m) = A_k(n+m)$ by \eqref{A_k2}.  It remains to prove \eqref{A_k9} if $k,n,m \geq 1$. But in this case $A_k(n) + m \leq A_k(n) + A_k(m) \leq A_k(n+m)$ by \eqref{A_k2} and \eqref{A_k3}.

  Finally, we prove \eqref{A_k8} by induction on $k$.  It is clear that the inequality holds if $k=2$, so suppose that $k \geq 3$ and that the result holds for smaller values of $k$.  It is also clear that the inequality holds if $n = 0$ or if $m = 0$; suppose that $n,m \geq 1$.   Applying the induction hypothesis, together with \eqref{A_k4}, we calculate that $A_k(n)^m = A_{k-1}(A_k(n-1))^m \leq A_{k-1}(mA_k(n-1))$.  Applying \eqref{A_k4}, \eqref{A_k5} and \eqref{A_k1}, we see that this quantity is at most $A_{k-1}(A_k(nm - m)) \leq A_{k-1}(A_k(nm - 1)) = A_k(nm)$.
\end{proof}

\section{Comparing the hydra functions to  Ackermann's functions}
  \label{hydra functions versus Ackermann's functions}

In this section we prove Proposition~\ref{prop2} comparing Ackermann's functions to the hydra functions.  The proof will proceed via a third family of functions $\phi_k$.  In this section $\phi_k(n)$ will will be defined for $n \geq 0$; subsequently we will give a more general definition with an expanded domain.

For integers $k \geq 1$ and $n \geq 0$, define $\phi_k(n) := \H(\theta^n(a_k))$.  The functions $\H_k$ satisfy the recursion relation \begin{equation} \H_k(n+1) = \H_k(n) + \phi_k(\H_k(n)) \label{eq4} \end{equation} since after $\H_k(n)$ strikes the word ${a_k}^{n+1}$ has become $\theta^{\H_k(n)}(a_k)$.  We will need the following elementary properties of the functions $\phi_k$.

\begin{lemma} \label{lem1}
  For integers $k \geq 1$ and $n \geq 0$,  \begin{align}
    \phi_k(0) \ &= \ 1, \label{phi of 0} \\
    \phi_2(n) \ &= \ n+1, \label{phi2} \\
    \phi_k(n) \ &\geq \ 1, \label{phi geq 1} \\
    \phi_{k+1}(n+1) \ &= \ \phi_{k+1}(n) + \phi_k(\phi_{k+1}(n) + n). \label{phi recursion relation}
    \intertext{For integers $k \geq 2$ and $n \geq 0$,}
    \phi_k(n) \ &< \ \phi_k(n+1), \label{phi increasing} \\
    \phi_k(n) \ &\geq \ n. \label{phi geq n}
  \end{align}
\end{lemma}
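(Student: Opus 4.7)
The plan is to derive all six equations from the interplay between the substitution $\theta$ and the decapitation--regeneration rule. The observation I rely on throughout is that $\theta$ extends to a monoid homomorphism on positive words (since regeneration acts letter--by--letter), so one strike on a positive word $w$ produces $\theta(w')$, where $w'$ is $w$ with its first letter removed. A consequence I will use repeatedly: during the first $m$ strikes on a concatenation $uv$, provided the prefix $u$ is not completely consumed during those strikes, the suffix $v$ is simply replaced by $\theta^m(v)$.

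For \eqref{phi of 0}, $\theta^0(a_k) = a_k$, a single letter killed in one strike. For \eqref{phi2}, I would first establish the closed form $\theta^n(a_2) = a_2 {a_1}^n$ by induction, using $\theta(a_2) = a_2 a_1$ and $\theta(a_1) = a_1$. One strike then removes the leading $a_2$ and leaves ${a_1}^n$ untouched (since $\theta$ fixes $a_1$), and each of the remaining $n$ strikes peels off one $a_1$, for a total of $n+1$. Equation \eqref{phi geq 1} is immediate because $\theta^n(a_k)$ is a nonempty positive word.

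The main work is \eqref{phi recursion relation}. Since $\theta$ is a homomorphism on positive words, $\theta^{n+1}(a_{k+1}) = \theta^n(a_{k+1}a_k) = \theta^n(a_{k+1})\,\theta^n(a_k)$. I would then run the battle on this concatenation. The first $\phi_{k+1}(n)$ strikes operate on the prefix $\theta^n(a_{k+1})$ exactly as they would operate on $\theta^n(a_{k+1})$ in isolation (decapitation never reaches the suffix while the prefix is nonempty), and each such strike additionally applies one power of $\theta$ to the suffix. After these $\phi_{k+1}(n)$ strikes the prefix has been exhausted and the suffix has become $\theta^{\phi_{k+1}(n)}(\theta^n(a_k)) = \theta^{\phi_{k+1}(n)+n}(a_k)$, which then requires a further $\phi_k(\phi_{k+1}(n)+n)$ strikes, yielding the claimed identity.

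Finally, \eqref{phi increasing} and \eqref{phi geq n} follow quickly. For $k \geq 2$, the recursion rewrites as $\phi_k(n+1) = \phi_k(n) + \phi_{k-1}(\phi_k(n)+n)$, and the second summand is at least $1$ by \eqref{phi geq 1}, so $\phi_k$ is strictly increasing. Induction on $n$ from the base $\phi_k(0) = 1$ together with this strict increase then yields $\phi_k(n) \geq n$. The only delicate point in the whole argument is justifying the bookkeeping behind \eqref{phi recursion relation}, namely that the battle on $\theta^n(a_{k+1})\theta^n(a_k)$ genuinely splits into a prefix phase followed by a suffix phase, with the suffix having undergone exactly $\phi_{k+1}(n)$ applications of $\theta$ during the prefix phase.
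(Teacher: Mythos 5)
Your proof is correct and follows essentially the same route as the paper: decompose $\theta^{n+1}(a_{k+1})$ as $\theta^n(a_{k+1})\,\theta^n(a_k)$, track the battle through the prefix phase while the suffix passively accumulates applications of $\theta$, and then read off the recursion. The only cosmetic difference is that you obtain the factorization directly from $\theta$ being a monoid homomorphism applied to $\theta(a_{k+1}) = a_{k+1}a_k$, whereas the paper first derives the closed-form expansion $\theta^{n+1}(a_{k+1}) = a_{k+1}a_k\theta(a_k)\cdots\theta^n(a_k)$ by induction on $n$; both give the same split.
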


\begin{proof}
  Assertions \eqref{phi of 0}, \eqref{phi2} and \eqref{phi geq 1} are straightforward.  For \eqref{phi recursion relation}, note that, by induction on $n$, $\theta^{n+1}(a_{k+1})  = a_{k+1}  a_{k}  \theta(a_{k}) \ldots \theta^{n}(a_{k})$ and hence $\theta^{n+1}(a_{k+1}) = \theta^{n}(a_{k+1}) \theta^{n}(a_{k})$.  Thus, after $\phi_{k+1}(n)$ strikes, $\theta^{n+1}(a_{k+1})$ has become $\theta^{\phi_{k+1}(n)}(\theta^{n}(a_{k})) = \theta^{\phi_{k+1}(n) + n}(a_{k})$.  Inequality \eqref{phi increasing} follows immediately from \eqref{phi geq 1} and \eqref{phi recursion relation} and inequality~\eqref{phi geq n} follows from \eqref{phi geq 1} and \eqref{phi increasing}.
\end{proof}

It is easy to check that $\phi_1 \simeq A_0$ and $\phi_2 \simeq A_1$.  As such, the next result is sufficient to establish that $\phi_k \simeq A_{k-1}$ for $k \geq 1$.

\begin{lemma} \label{phi_k vs Ackermann}
  \mbox{}
  \begin{enumerate}
    \item For integers $k \geq 3$ and $n \geq 0$, $\phi_k(n) \geq A_{k-1}(n)$.

    \item For integers $k \geq 2$ and $n \geq 0$, $\phi_k(n) \leq A_{k-1}(n+k) - n - k$.
  \end{enumerate}

\end{lemma}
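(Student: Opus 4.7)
My plan is to prove both parts by double inductions driven by the recursion \eqref{phi recursion relation}, outer on $k$ and inner on $n$.

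For part (1), the base case $k = 3$ uses \eqref{phi2} to rewrite \eqref{phi recursion relation} as $\phi_3(n+1) = 2\phi_3(n) + n + 1$, so the inner induction delivers $\phi_3(n+1) \geq 2A_2(n) = A_2(n+1)$. For the outer step from $k$ to $k+1$, I would discard the additive $n$ inside $\phi_k$ in \eqref{phi recursion relation} via monotonicity \eqref{phi increasing} and then apply, in order, the outer hypothesis, \eqref{A_k5}, and the inner hypothesis:
\begin{equation*}
\phi_{k+1}(n+1) \ \geq \ \phi_k(\phi_{k+1}(n)) \ \geq \ A_{k-1}(\phi_{k+1}(n)) \ \geq \ A_{k-1}(A_k(n)) \ = \ A_k(n+1),
\end{equation*}
the final equality being \eqref{A_k4}.

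For part (2), a direct induction with the stated bound is just too tight, so my plan is to track the auxiliary quantity $f(k,n) := \phi_k(n) + n + k$ and prove $f(k,n) \leq A_{k-1}(n+k)$. The base $k = 2$ is immediate from \eqref{phi2}: $f(2,n) = 2n + 3 \leq 2(n+2) = A_1(n+2)$. The outer step on $k$ is carried out by an inner induction on $n$. For the inner base $n=0$, the value $\phi_{k+1}(0) = 1$ from \eqref{phi of 0} and the strict form of \eqref{A_k2} give $A_k(k+1) \geq k+2 = f(k+1, 0)$. The inner step turns on the pleasant identity
\begin{equation*}
f(k+1, n+1) \ = \ f(k, m) + 2, \qquad \textup{where} \ \ m := \phi_{k+1}(n) + n,
\end{equation*}
which is just a rewrite of \eqref{phi recursion relation}. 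Applying the outer hypothesis and then the inner hypothesis (which yields $m + k \leq A_k(n+k+1) - 1$), together with monotonicity of $A_{k-1}$, produces
\begin{equation*}
f(k+1, n+1) \ \leq \ A_{k-1}(A_k(n+k+1) - 1) + 2.
\end{equation*}

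The main obstacle is then closing the last gap between this bound and the goal $A_k(n+k+2) = A_{k-1}(A_k(n+k+1))$, which amounts to showing that $A_{k-1}$ has a gap of at least $2$ at the point $M := A_k(n+k+1)$. For $k = 2$ this is an equality since $A_1$ has constant gap $2$. For $k \geq 3$, I would write $A_{k-1}(M) - A_{k-1}(M-1) = A_{k-2}(y) - y$ where $y := A_{k-1}(M-1)$, and use that $M$ is comfortably large (already $M \geq A_3(4) = 2^{16}$ when $k = 3$) to ensure $y \geq 2$; the required $A_{k-2}(y) \geq y + 2$ then reduces, via \eqref{A_k7} when $k \geq 4$ and directly when $k = 3$, to $A_1(y) = 2y \geq y + 2$. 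This dovetails the two inductions and completes the plan.
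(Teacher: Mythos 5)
Your part~(1) follows the paper's route essentially verbatim: both drop the additive $\phi_{k+1}(n)$ and the additive $n$ inside the argument of $\phi_k$, then apply the inner and outer hypotheses together with \eqref{A_k4} (the only difference being which of the two hypotheses is invoked first). The argument is correct, modulo spelling out the trivially immediate inner base $\phi_{k+1}(0)=1=A_k(0)$.

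Your part~(2) is correct but takes a genuinely different route. The paper proves the strengthened claim $\phi_k(n) \leq A_{k-1}(n+c) - n + k - 2c$ for an arbitrary parameter $c \geq k$; during the inductive step the inner hypothesis yields a surplus $k' - c$ inside the outer $A_{k'-2}$, and the freedom $c \geq k'$ makes that surplus nonpositive so it can simply be discarded by monotonicity. You instead reformulate the target as $f(k,n) := \phi_k(n) + n + k \leq A_{k-1}(n+k)$, observe the exact identity $f(k+1,n+1) = f(k, \phi_{k+1}(n)+n) + 2$, and absorb the resulting $+2$ by a gap estimate $A_{k-1}(M) - A_{k-1}(M-1) \geq 2$ at the large argument $M = A_k(n+k+1)$, verified via \eqref{A_k4}, \eqref{A_k7} and \eqref{A_k of 1}. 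Both arguments close cleanly; yours avoids the auxiliary parameter at the cost of a separate lemma-like gap estimate, while the paper's $c$-trick keeps everything inside a single induction. Your base cases ($k=2$ via \eqref{phi2} giving $2n+3 \leq 2(n+2)$, and $n=0$ via $\phi_{k+1}(0)=1$ and the strict form of \eqref{A_k2}) check out, and the claim $y = A_{k-1}(M-1) \geq 2$ needs only $M \geq 2$, which holds since $M = A_k(n+k+1) \geq A_k(1) = 2$.
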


\begin{proof}
  We prove (i) by simultaneous induction on $k$ and $n$.  It is immediate from \eqref{phi of 0} that the inequality holds if $n = 0$.  Solving the recursion relation \eqref{phi recursion relation} with the initial condition given by \eqref{phi of 0}, one checks that $\phi_3(n) = 3\cdot 2^n - n - 2$.  Since $A_2(n) = 2^n$, it is easy to check that (i) holds if $k=3$. Now let $k' >3$ and $n' > 0$ and suppose, as an inductive hypothesis, that the result is true if $k < k'$ or if $k = k'$ and $n < n'$.  Applying \eqref{A_k4}, \eqref{phi geq 1} and \eqref{phi increasing}, we calculate that $\phi_{k'}(n') = \phi_{k'}(n'-1) + \phi_{k'-1}(\phi_{k'}(n'-1) + n' - 1) \geq \phi_{k'-1}(\phi_{k'}(n'-1)) \geq \phi_{k'-1}(A_{k'-1}(n'-1)) \geq A_{k'-2}(A_{k'-1}(n'-1)) = A_{k'-1}(n')$.  Thus the result holds at $(k,n) = (k', n')$, completing the proof of (i).

  We now make the following claim: for all $k \geq 2$, $n \geq 0$ and $c \geq k$, \begin{equation}
    \phi_k(n) \  \leq \  A_{k-1}(n+c) - n + k - 2c. \label{claim}
  \end{equation}  Assertion (ii) will follow by setting $c = k$.  The proof of this inequality is by simultaneous induction on $k$ and $n$.  Since $A_1(n) = 2n$ and, by \eqref{phi2}, $\phi_2(n) = n+1$, it is straightforward to check that \eqref{claim} holds if $k=2$.  The inequality also holds for $n = 0$ since, by \eqref{A_k7} and \eqref{phi of 0}, $\phi_k(0) = 1 \leq k = A_1(c) + k -2c \leq A_{k-1}(c) + k - 2c$.  Now let $c \geq k' > 2$ and $n' > 0$ and suppose, as an induction hypothesis, that \eqref{claim} holds if $k < k'$ or if $k = k'$ and $n < n'$.  We calculate that \begin{align*}\phi_{k'}(n') \ &=  \ \phi_{k'}(n'-1) + \phi_{k'-1}(\phi_{k'}(n'-1) + n'-1) &&\text{by \eqref{phi recursion relation}} \\
  &\leq \ \phi_{k'}(n'-1) + A_{k'-2}(\phi_{k'}(n'-1) + n' + c - 1) \\
  & \mspace{50mu} - \phi_{k'}(n'-1) - n' + k' - 2c \\
  &= \ A_{k'-2}(\phi_{k'}(n'-1) + n' + c - 1) - n' + k' -2c  \\
  &\leq \ A_{k'-2}(A_{k'-1}(n'+c-1) + k' - c) - n' + k' - 2c && \text{by \eqref{A_k5}} \\
  &\leq \ A_{k'-2}(A_{k'-1}(n'+c-1)) - n' + k' - 2c && \text{by \eqref{A_k5}} \\
  &= \ A_{k'-1}(n' + c) - n' + k' -2c. && \text{by \eqref{A_k4}} \end{align*}  Thus the inequality holds if $(k,n) = (k', n')$, completing the proof of \eqref{claim}.
\end{proof}

Since $A_1(n) = 2n$, $\H_1(n) = n$, $A_2(n) = 2^n$ and $H_2(n) = 2^n - 1$, the next result is sufficient to establish Proposition~\ref{prop2}.

\begin{prop} \label{prop3}
  \mbox{}
  \begin{enumerate}
    \item For integers $k \geq 3$ and $n \geq 2$, $\H_k(n) \geq A_k(n)$.

    \item For integers $k \geq 1$ and $n \geq 0$, $\H_k(n) \leq A_k(n+k)$.
  \end{enumerate}
\end{prop}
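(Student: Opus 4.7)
The plan for both parts is to induct on $n$, using the recursion $\H_k(n+1) = \H_k(n) + \phi_k(\H_k(n))$ from \eqref{eq4} together with the comparison between $\phi_k$ and $A_{k-1}$ given by Lemma~\ref{phi_k vs Ackermann}. The key feature to exploit is that the recursion for $\H_k$ via $\phi_k$ mirrors the Ackermann identity $A_{k-1}(A_k(n)) = A_k(n+1)$ from \eqref{A_k4}; one then just needs to ensure the two sides stay aligned through a single induction.

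For (i), I would take $k \geq 3$ fixed and induct on $n \geq 2$. The base case $n = 2$ uses that $\H_k(2) = k+1$ (easy to read off the table, or check directly: $a_k a_k \to a_k a_{k-1} \to a_{k-1}a_{k-2} \to \cdots \to a_1 \to \varepsilon$ takes $k+1$ strikes), so $\H_k(2) = k+1 \geq 4 = A_k(2)$ by \eqref{A_k of 2}. For the inductive step, assuming $\H_k(n) \geq A_k(n)$, chain
\[ \H_k(n+1) \ = \ \H_k(n) + \phi_k(\H_k(n)) \ \geq \ \phi_k(\H_k(n)) \ \geq \ A_{k-1}(\H_k(n)) \ \geq \ A_{k-1}(A_k(n)) \ = \ A_k(n+1), \]
invoking Lemma~\ref{phi_k vs Ackermann}(i) (valid because $k \geq 3$), monotonicity \eqref{A_k5}, and \eqref{A_k4}.

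For (ii), the case $k = 1$ is immediate since $\H_1(n) = n \leq 2(n+1) = A_1(n+1)$. For $k \geq 2$ the naive induction on $\H_k(n) \leq A_k(n+k)$ fails: Lemma~\ref{phi_k vs Ackermann}(ii) gives $\phi_k(m) \leq A_{k-1}(m + k) - m - k$, and when one substitutes $m = \H_k(n) \leq A_k(n+k)$, the argument to $A_{k-1}$ becomes $A_k(n+k) + k$, which is far bigger than $A_k(n+k)$; the super-linearity of $A_{k-1}$ then blows the induction apart. The fix is to strengthen the hypothesis to $\H_k(n) + k \leq A_k(n+k)$, which exactly absorbs the offending $+k$. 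The base case $n = 0$ reduces to $k \leq A_k(k)$, which is \eqref{A_k2}. For the inductive step,
\[ \H_k(n+1) \ = \ \H_k(n) + \phi_k(\H_k(n)) \ \leq \ A_{k-1}(\H_k(n) + k) - k \ \leq \ A_{k-1}(A_k(n+k)) - k \ = \ A_k(n+k+1) - k, \]
so $\H_k(n+1) + k \leq A_k((n+1)+k)$, completing the induction; of course this implies the weaker bound stated in (ii).

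The only real obstacle is identifying the correct strengthening in (ii); once one sees that Lemma~\ref{phi_k vs Ackermann}(ii) is tailored so that the telescoping $-\H_k(n) - k$ inside the recursion produces exactly a clean $-k$, the inductive step is forced, and everything else is a routine application of the monotonicity and composition properties of the $A_k$.
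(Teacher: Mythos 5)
Your proof is correct and follows essentially the same route as the paper: both parts are proved by induction on $n$ using the recursion \eqref{eq4} together with Lemma~\ref{phi_k vs Ackermann}, and for part (ii) both strengthen the bound to $\H_k(n) + k \leq A_k(n+k)$ so that the induction closes up via \eqref{A_k4}. (Minor differences: the paper phrases (ii) as a simultaneous induction on $k$ and $n$, but as you observed only the induction hypothesis in $n$ is actually invoked; and your base case for (i) computes $\H_k(2) = k+1$ directly rather than via the paper's somewhat opaque expression.)
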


\begin{proof}[Proof of Proposition~\ref{prop3}]
   We prove (i) by induction on $n$.  The inequality certainly holds for $n=2$ since, by \eqref{A_k of 2}, $\H_k(2) = \H(a_k a_{k-1} a_{k-1} a_{k-2}) \geq 4 = A_k(2)$.  Now let $n' > 2$ and suppose that (i) holds for $n < n'$.  Applying \eqref{A_k4}, \eqref{eq4} and \eqref{phi increasing}, together with Lemma~\ref{phi_k vs Ackermann} (i), we calculate that $\H_k(n') = \H_k(n'-1) + \phi_k(\H_k(n'-1)) \geq \phi_k(\H_k(n'-1)) \geq \phi_k(A_k(n'-1)) \geq A_{k-1}(A_k(n'-1)) = A_k(n')$.  Thus the inequality holds for $n=n'$, completing the proof of (i).

  For (ii), we prove the stronger claim that, for all $k \geq 1$, $n \geq 0$, \begin{equation}
    \H_k(n) \ \leq \ A_k(n+k) - k. \label{eq3}
  \end{equation}  The proof is by simultaneous induction on $k$ and $n$.  Since $A_1(n) = 2n$ and $\H_1(n) =n$, it is straightforward to check that \eqref{eq3} holds if $k=1$.  The inequality holds if $n=0$ since, by \eqref{A_k7}, $\H_k(0) = 0 \leq k = A_1(k) - k \leq A_k(k) - k$.  Now let $k' >1$ and $n'>0$ and suppose, as an inductive hypothesis, that \eqref{eq3} holds if $k < k'$ or if $k = k'$ and $n < n'$.  We calculate that \begin{align*}
    \H_{k'}(n') & \ = \  \H_{k'}(n'-1) + \phi_{k'}(\H_{k'}(n'-1)) &&\text{by \eqref{eq4}} \\
    & \ \leq \  \H_{k'}(n'-1) + A_{k'-1}(\H_{k'}(n'-1) + k') \\
     &\mspace{50mu} - \H_{k'}(n'-1) - k' &&\text{by Lemma~\ref{phi_k vs Ackermann} (ii)}\\
    & \ =  \ A_{k'-1}(\H_{k'}(n'-1) + k') - k' \\
    & \ \leq \  A_{k'-1}(A_{k'}(n'+k'-1)) - k' &&\text{by \eqref{A_k5}} \\
    & \ = \ A_{k'}(n' + k') - k' &&\text{by \eqref{A_k4}.}
  \end{align*} Thus the inequality holds if $(k,n) = (k',n')$, completing the proof of \eqref{eq3}.
\end{proof}

\section{\texorpdfstring{Freeness of the subgroups $H$ and $H_k$}{Freeness of the subgroups H and Hk}}  \label{freedom}

In this section we prove:

\begin{prop} \label{freeness of H_k}  \label{freeness of H}
The subgroup $H_k$ of $G_k$ is free with free basis $a_1t, \ldots, a_kt$, and
the subgroup $H$ of $G$ is free with free basis $a_1t, a_2t, \ldots$.
\end{prop}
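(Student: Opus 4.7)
The plan is to prove that the homomorphism $\psi\colon F(x_1,\ldots,x_k) \to G_k$, $x_i\mapsto a_i t$, is injective, by induction on $k$; the statement for $H\le G$ then follows by taking the direct limit along the natural inclusions $G_i\hookrightarrow G_{i+1}$. The base case $k=1$ is immediate since $G_1 \cong \mathbb Z^2$ and $a_1 t$ has infinite order. For the inductive step I work with $F_k := F(a_1,\ldots,a_k)$ and exploit the Bass--Serre tree of the HNN decomposition $G_k = F_k *_{F_k,\theta}$: both associated subgroups equal $F_k$, so this tree is just a real line on which $G_k$ acts by translations, with $F_k$ in the kernel and $t$ translating by $1$.

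Each $b_i := a_i t$ also translates by $1$, so $H_k = \langle b_1,\ldots,b_k\rangle$ acts on the line with surjective translation-length map $\tau\colon H_k\twoheadrightarrow\mathbb Z$. Setting $K := \ker\tau = H_k\cap F_k$, Bass--Serre theory yields $H_k \cong K\rtimes_\phi\langle b_1\rangle$, with $\phi$ conjugation by $b_1$. Since $K\le F_k$ is a subgroup of a free group, $K$ is itself free by Nielsen--Schreier, and a Reidemeister--Schreier rewrite with transversal $\{b_1^n\}_{n\in\mathbb Z}$ shows that $K$ is generated by the elements
\begin{equation*}
  c_{i,n} \;=\; b_1^n b_i b_1^{-n-1} \;=\; a_1^n\,\theta^{-n}(a_i)\,a_1^{-n-1}\qquad (i=2,\ldots,k,\ n\in\mathbb Z),
\end{equation*}
with $\phi(c_{i,n}) = c_{i,n+1}$.

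Granted the auxiliary claim that $\{c_{i,n}\}$ is a free basis of $K$, the presentation $\langle K,b_1 \mid b_1 c_{i,n}b_1^{-1}=c_{i,n+1}\rangle$ of $H_k$ Tietze-simplifies: the relations $c_{i,n}=b_1^n c_{i,0}b_1^{-n}$ eliminate every $c_{i,n}$ with $n\ne 0$, leaving the free presentation $\langle b_1,c_{2,0},\ldots,c_{k,0}\mid\emptyset\rangle$. Since $c_{i,0}=b_i b_1^{-1}$, Nielsen transformations convert this to the free basis $\{b_1,\ldots,b_k\}$, closing the induction.

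I expect the main obstacle is proving that $\{c_{i,n}\}$ really is a free basis of $K$ inside $F_k$. My plan here is an inner induction on $k$ via the free-product splitting $F_k = F_{k-1}*\langle a_k\rangle$. Since $\theta$ preserves $F_{k-1}$ and $\theta^n(a_k)\in a_k\cdot F_{k-1}$ for every $n\in\mathbb Z$, each $c_{k,n}$ contributes exactly one positive $a_k$-letter while $c_{i,n}$ with $i<k$ contributes none. In a hypothetical trivial product $\prod c_{i_\ell,n_\ell}^{\epsilon_\ell}$ the $a_k$-letters must cancel in adjacent pairs within the free product, and the intervening $F_{k-1}$-element is built from $c_{i,\ell}$-factors with $i<k$ together with the tails $Y_{-n,k}:=a_k^{-1}\theta^{-n}(a_k)\in F_{k-1}$ of the adjacent $c_{k,\cdot}$-generators. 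The binomial identity $e_j^{(n,i)}=\binom{n}{i-j}$ for the $a_j$-exponent of $\theta^n(a_i)$ forces the $a_{k-1}$-exponent of $Y_{-n,k}$ to equal $-n$; this invariant labels the contribution of each $c_{k,n}$-generator by its index, and combined with the inner inductive hypothesis for $k-1$ (which provides unique reduced-word representatives for elements of $\langle c_{i,\ell}:i<k\rangle$), it excludes every non-trivial cancellation, leaving only those corresponding to inverse-pair adjacencies in the $c$-alphabet.
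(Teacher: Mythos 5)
Your overall architecture — pass to the translation-length map on the Bass--Serre line to get $H_k \cong K \rtimes_\phi \langle b_1\rangle$ with $K = H_k \cap F_k$, exhibit the Schreier generators $c_{i,n} = b_1^n b_i b_1^{-n-1}$, and Tietze-collapse the resulting HNN-type presentation — is sound, and it is a genuinely different route from the paper's. The paper (its Proposition~\ref{prop5}) instead works with free-by-cyclic normal forms directly and proves by induction on $k$ a \emph{strengthened} three-part statement; the critical part is (iii), ``if the $F$-component of the normal form is positive then the word in the $a_it$ is positive,'' and the auxiliary part (ii) is a bespoke condition on words of the form $\theta(a_{k+1}^{-1})\theta^{1-r}(a_{k+1})$ engineered precisely to kill the problematic cancellation case. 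Everything in your write-up downstream of the free-basis claim is correct: Schreier's lemma does give that $\{c_{i,n}\}$ generates $K$, the conjugation action is the shift $c_{i,n}\mapsto c_{i,n+1}$, and the Tietze/Nielsen simplification to the free basis $\{b_1,\ldots,b_k\}$ is valid once the $c_{i,n}$ are known to be free.

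The gap is in the free-basis claim itself, and it is exactly where the paper's strengthened induction is doing its work. Fix an innermost pair of cancelling $a_k$-syllables in a hypothetically trivial reduced $c$-word; in the case $\epsilon = 1$, $\epsilon' = -1$ the intervening $F_{k-1}$-element is $Z := Y_{-m}\,a_1^{-m-1}\,W\,a_1^{m'+1}\,Y_{-m'}^{-1}$, and cancellation in the free product $F_{k-1} * \langle a_k\rangle$ forces $Z = 1$, hence $W = a_1^{m+1}Y_{-m}^{-1}Y_{-m'}a_1^{-m'-1}$ in $F_{k-1}$. Your binomial invariant gives only the exponent-sum equation $(-m) + e_{k-1}(W) + m' = 0$, where $e_{k-1}(W)$ is the $a_{k-1}$-exponent of $W$; this does \emph{not} force $m = m'$, since $W$ (a nontrivial word in the $c_{i,\ell}$, $i < k$) can absorb $m'-m$ into its $a_{k-1}$-exponent. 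The appeal to the inner inductive hypothesis also does not close the argument: that hypothesis says a nonempty reduced word in $\{c_{i,\ell}:i<k\}$ is nontrivial in $F_{k-1}$, but here you must rule out that the nontrivial element $a_1^{m+1}Y_{-m}^{-1}Y_{-m'}a_1^{-m'-1}$ lies in $\langle c_{i,\ell}: i < k\rangle$ at all, and nothing in the IH as stated speaks to membership of such ``mixed'' elements. What is needed is a statement of the shape ``$\theta(a_{k}^{-1})\theta^{1-r}(a_{k})\in K_{k-1}$ only when $r=0$,'' which is precisely the content of part (ii) of the paper's Proposition~\ref{prop5} (stated one index up, and proved there via the positivity claim (iii)). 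Until you supply an analogue of that strengthening, the inner induction does not close.
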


To facilitate an induction argument, we will prove the following more elaborate proposition.     Proposition~\ref{freeness of H_k} will follow because if $w = w(a_1t, \ldots, a_kt)$ is freely reduced and represents $1$ in $G_k$ (or, equivalently, in $G$), then $w = \varepsilon$ by  conclusion (i), and so  $a_1t,  \ldots, a_kt$ are each not the identity and satisfy no non-trivial relations.

 \begin{prop} \label{prop5}
  Let $u = u(a_1 t, \ldots, a_k t)$ be a freely reduced word with free--by--cyclic normal form $v t^r$ --- that is,  $u = v t^r$ in $G_k$,   $v= v(a_1, \ldots, a_k)$ is reduced, and $r \in \Z$.  \begin{enumerate}
    \item If $v = \varepsilon$, then $u = \varepsilon$.
    \item If $v = \theta({a_{k+1}}^{-1}) \theta^{1-r}(a_{k+1})$ in $F(a_1, a_2, \ldots)$, then $u = \varepsilon$.
    \item If $v$ is positive, then $u$ is positive.
  \end{enumerate}
\end{prop}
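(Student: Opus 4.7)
The plan is to prove (i), (ii), and (iii) simultaneously by induction on the length $n$ of $u$ in the generating set $\{a_1 t, \ldots, a_k t\}$; the three statements interlock under the backward inductive step and must be treated together. The base case $n = 0$ is immediate: one has $u = \varepsilon$, $v = \varepsilon$, $r = 0$, and the three conclusions follow directly, noting for (ii) that $\theta(a_{k+1}^{-1})\,\theta(a_{k+1}) = \varepsilon$ in $F(a_1, a_2, \ldots)$.

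For the inductive step with $n \geq 1$, I would split $u = u^\star \cdot (a_i t)^\epsilon$, where $u^\star$ is the freely reduced word of length $n-1$ obtained by deleting the final letter, and let $v^\star t^{r^\star}$ be its free-by-cyclic normal form. Using the semidirect-product relation $t^s a_j = \theta^s(a_j)\, t^s$, one computes that $r = r^\star + \epsilon$ and that $v$ is the free reduction of $v^\star \cdot \theta^{r^\star}(a_i)$ (when $\epsilon = +1$) or of $v^\star \cdot \theta^{r^\star-1}(a_i^{-1})$ (when $\epsilon = -1$). Given any of the hypotheses (i)--(iii) on $v$, one inverts this relation to determine $v^\star$, verifies that $v^\star$ together with $r^\star$ again falls into one of the three hypothesis forms, and applies the inductive hypothesis to $u^\star$ to conclude that $u^\star = \varepsilon$ (or, in case (iii), that $u^\star$ is positive). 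The desired conclusion for $u$ then follows by re-appending the trailing generator, with free reducedness of $u$ invoked to rule out several degenerate sub-cases.

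The key algebraic identity driving the induction is
\[
\theta^{n+1}(a_{k+1}) \ = \ \theta^n(a_{k+1}) \cdot \theta^n(a_k),
\]
valid for every $n \in \Z$. Writing $V_r := \theta(a_{k+1}^{-1})\,\theta^{1-r}(a_{k+1})$ for the form appearing in (ii), this identity rearranges to the recursion $V_{r+1} = V_r \cdot \theta^{-r}(a_k)^{-1}$, linking consecutive (ii)-forms by exactly the kind of factor introduced by removing a final $(a_k t)^{\pm 1}$ letter. This is the precise algebra that makes (ii) the ``right'' strengthening of (i): the backward step shifts (ii) at parameter $r$ to (ii) at a shifted parameter $r^\star$, or collapses it to the trivial (i)-case at the boundary.

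The main obstacle will be the case bookkeeping: one must verify, for every combination of hypothesis on $v$, sign $\epsilon$, and index $i$ (particularly distinguishing $i = k$ from $i < k$), that the derived $v^\star$ indeed lies in one of the three designated forms at the correct value of $r^\star$. Essential here is the leading-letter property that $\theta^n(a_j)$ begins with $a_j$ for every $n \in \Z$, which ensures that each $a_k^{\pm 1}$ appearing in $v$ can be traced back unambiguously to a particular $(a_k t)^{\pm 1}$ factor of $u$ and prevents spurious cancellations; combined with free reducedness of $u$ to eliminate the cases in which $u^\star$ ends with $(a_i t)^{-\epsilon}$, this establishes the closure of the three hypothesis forms under the inductive step and completes the proof.
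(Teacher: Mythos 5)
Your proposal takes a genuinely different route from the paper: you induct on the length $n$ of $u$, peeling off the final letter $(a_i t)^{\epsilon}$, whereas the paper first shows that assertion~(iii) at rank $k$ implies (i) and (ii) at the same rank $k$ (via the observation that if both $u$ and $u^{-1}$ are positive as words on the $a_j t$ then $u = \varepsilon$), and then proves (iii) by induction on the \emph{rank} $k$, decomposing $u = \sigma_0 (a_k t)^{\epsilon_1}\sigma_1 \cdots (a_k t)^{\epsilon_m}\sigma_m$ and applying the inductive hypothesis (at rank $k-1$) to the whole blocks $\sigma_j$. Unfortunately your route has a genuine gap at the central step: the assertion that ``$v^\star$ together with $r^\star$ again falls into one of the three hypothesis forms'' is false. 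Concretely, at $k=2$ take $v = a_2^{-1}$, $r = 1$ (a legitimate instance of (ii), since $\theta(a_3^{-1})\theta^{0}(a_3)$ reduces to $a_2^{-1}$) and suppose the final letter of $u$ is $(a_1 t)^{-1}$. Then $r^\star = 2$ and $v^\star$ is the free reduction of $v\,\theta^{1}(a_1) = a_2^{-1}a_1$, which is non-empty, not positive, and not of the form $\theta(a_3^{-1})\theta^{1-r^\star}(a_3)$ (that form for $r^\star = 2$ is $a_2^{-2}$). Moreover $(u^\star)^{-1}$ has normal form $a_1^{-1}a_2a_1^{-2}\,t^{-2}$, which is also not covered, so not even the inverse trick rescues this subcase. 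The inductive hypothesis therefore cannot be invoked, and the proof is stuck. (A similar failure happens with final letter $(a_1 t)^{+1}$, where $v^\star = a_2^{-1}a_1^{-1}$, $r^\star=0$; there the inverse trick can be made to work, but you neither mention it nor the extra $t$-exponent count that is then needed.)

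The algebraic identity you isolate, $V_{r+1} = V_r\,\theta^{-r}(a_k)^{-1}$, is correct, but it is not the factor that actually appears when a final generator is deleted: removing $(a_k t)^{+1}$ appends $\theta^{r-1}(a_k^{-1})$ to $v$, while removing $(a_k t)^{-1}$ appends $\theta^{r}(a_k)$, and neither of these agrees with $\theta^{-r}(a_k)^{-1}$, so the (ii)-forms do not chain under your step. The deeper issue is that the one-letter-at-a-time decomposition is too fine: the intermediate normal forms $v^\star$ sweep out a far larger class than the three designated forms, and no amount of case bookkeeping will confine them. The paper's rank induction avoids this precisely because each $\sigma_j$ is an entire element of $H_{k-1}$: the freeness, positivity and free-reducedness constraints force its normal form into one of (i), (ii), (iii) (the (ii)-form arises exactly from the cancellation $\theta^{\lambda_i}(a_k)\theta^{\mu_i}(\tau_i)\theta^{\lambda_{i+1}}(a_k^{-1}) = 1$ that your letter-by-letter analysis never sees whole). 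To make your approach work you would need an explicitly characterised and much larger family of ``reachable'' pairs $(v^\star, r^\star)$ that is closed under your backward step, and it is not clear such a family admits a useful description.
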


We emphasise that we are considering $u$ as a word on the $a_it$ --- it is freely reduced if and only if it contains no subword $(a_it)^{\pm1} (a_it)^{\mp1}$.

\begin{proof}[Proof of Proposition~\ref{prop5}]
  We first show that for all fixed $k \geq 1$, if (iii) holds, then so do (i) and (ii).

  For (i), note that if $u = t^r$ in $G$, then $u^{-1} = t^{-r}$.  Thus (iii) implies that both of the freely reduced words $u$ and $u^{-1}$ are positive.  Hence $u = \varepsilon$.

  For (ii), we will separately consider the cases $r=0$, $r<0$, and $r>0$.  If $r = 0$, then $u = 1$ in $G$ and hence $u = \varepsilon$ by (i).  If $r < 0$, then $1-r \geq 1$ and so $\theta^{1-r}(a_{k+1}) = a_{k+1} a_k w$ in $F(a_1, a_2, \ldots)$ for some positive word $w = w(a_1, \ldots, a_k)$.  It follows that $v$ is positive and therefore (iii) implies that $u$ is positive.  Thus $r \geq 0$, giving a contradiction.  If $r > 0$, one calculates that $u^{-1} = t^{-r} \theta^{1-r}({a_{k+1}}^{-1}) \theta(a_{k+1}) = \theta({a_{k+1}}^{-1}) \theta^{1+r}(a_{k+1}) t^{-r}$  in $F(a_1, a_2, \ldots)$.  Since $1 + r \geq 1$, the reduced form of $\theta({a_{k+1}}^{-1}) \theta^{1+r}(a_{k+1})$ is positive, and so (iii) implies that $u^{-1}$ is positive.  Thus $-r \geq 0$, giving a contradiction.

 We now prove (iii) by induction on $k$.  Since $G_1$ is free abelian with basis $a_1, t$, it is easy to check that (iii) holds in the case $k=1$.  As an inductive hypothesis, assume that assertions (i), (ii) and (iii) all hold for smaller values of $k$. If $u$ contains no occurrence of an $(a_k t)^{\pm1}$, then we are done.  Otherwise, write $u = \sigma_0 (a_k t)^{\epsilon_1} \sigma_1 (a_k t)^{\epsilon_2} \ldots (a_k t)^{\epsilon_m} \sigma_m$, where each $\sigma_i = \sigma_i(a_1t, \ldots, a_{k-1}t)$ and each $\epsilon_i \in \{\pm1\}$.

Each $\sigma_i$ has free--by--cyclic normal form $\tau_i t^{s_i}$ for some $\tau_i = \tau_i(a_1, \ldots, a_{k-1})$ and some $s_i \in \Z$.  Direct calculation of the normal form of $u$ --- moving all the $t^{\pm 1}$ to the right--hand end and applying the automorphism $\theta^{\mp 1}$ whenever a $t^{\pm1}$ is moved past a letter $a_i$ --- gives that $v$ freely equals $$v' \ := \ \tau_0 \, \theta^{\lambda_1}({a_k}^{\epsilon_1}) \, \theta^{\mu_1}(\tau_1) \, \theta^{\lambda_2}({a_k}^{\epsilon_2}) \ldots \theta^{\lambda_m}({a_k}^{\epsilon_m}) \, \theta^{\mu_m}(\tau_m),$$ where \begin{align*}
    \lambda_i &= \begin{cases}
      -(s_0 + \ldots + s_{i-1} + \epsilon_1 + \ldots + \epsilon_{i-1}) \quad &\text{if $\epsilon_i = 1$,} \\
      -(s_0 + \ldots + s_{i-1} + \epsilon_1 + \ldots + \epsilon_i) \quad &\text{if $\epsilon_i = -1$,}
    \end{cases} \\[5pt]
    \mu_i &= -(s_0 + \ldots + s_{i-1} + \epsilon_1 + \ldots + \epsilon_i).
  \end{align*}

We claim that $\epsilon_i = 1$ for all $i$.
      For a contradiction, suppose otherwise.  Observe that, for each $s \in \Z$, there are words $w_s=w_s(a_1, \ldots, a_{k-1})$ and $w'_s=w'_s(a_1, \ldots, a_{k-1})$ such that $\theta^s(a_k) = a_k w_s$ and $\theta^s({a_k}^{-1}) = w_s' {a_k}^{-1}$.  Since $v$ is positive, there must be a subword ${a_k}^{\pm 1} \chi \ {a_k}^{\mp 1}$ in $v'$ which freely equals the empty word and in which $\chi = \chi(a_1, \ldots, a_{k-1})$.  The way this subword must arise is that for some $i$, either 
        \begin{enumerate}
      \item[(a)] $\epsilon_i = -1$, $\epsilon_{i+1} = 1$ and $\theta^{\mu_i}(\tau_i) =1$, or
      \item[(b)] $\epsilon_i = 1$, $\epsilon_{i+1}=-1$ and $\theta^{\lambda_i}(a_k) \, \theta^{\mu_i}(\tau_i) \, \theta^{\lambda_{i+1}}({a_k}^{-1}) = 1$.
    \end{enumerate}  In the first case $\tau_i = 1$ and hence the induction hypothesis (assertion (i)) gives that $\sigma_i = \varepsilon$.  But this contradicts the supposition that $u$ is freely reduced.  In the second case, one calculates that $\lambda_i - \mu_i = 1$ and $\lambda_{i+1} - \mu_i = 1 - s_i$, and so $\tau_i = \theta({a_k}^{-1}) \theta^{1 - s_i}(a_k)$.  The induction hypothesis (assertion (ii)) implies that $\sigma_i = \varepsilon$, but again this contradicts the supposition that $u$ is freely reduced.

  To complete our proof of (iii), we will show that all the $\sigma_i$ are positive.  Since $v$ is positive and each $\epsilon_i = 1$, we have that $\tau_0$ is positive and each $\theta^{\lambda_i}(a_k) \, \theta^{\mu_i}(\tau_i)$ is positive.  The inductive hypothesis (assertion (iii)) immediately gives that $\sigma_0$ is positive.  Suppose we have shown that $\sigma_0, \ldots, \sigma_{j-1}$ are positive, for some $j$.  It follows that $s_0, \ldots, s_{j-1} \geq 0$, whence $\lambda_j \leq 0$.  Note that if $w=w(a_1, \ldots, a_k)$ is positive and $s \geq 0$, then $\theta^s(w)$ is positive.  Hence $a_k \theta^{\mu_j - \lambda_j}(\tau_j) = a_k \theta^{-1}(\tau_j)$ is positive.  Since $\theta^{-1}(\tau_j)$ is a word on ${a_1}^{\pm 1}, \ldots, {a_{k-1}}^{\pm 1}$, it follows that $\theta^{-1}(\tau_j)$ is positive, and hence that $\tau_j$ is positive.  Applying the induction hypothesis (assertion (iii)) gives that $\sigma_j$ is positive.
\end{proof}

\section{\texorpdfstring{A lower bound on the distortion of $H_k$ in $G_k$}{A lower bound on the distortion of Hk in Gk}}  \label{DistortionLower}

In the following lemma  we see the battle between Hercules and the hydra manifest in $G_k$.  

\begin{lemma}  \label{big triangle}
For all $k,n \geq 1$, there is a positive word $u_{k,n} = u_{k,n}(a_1t, \ldots, a_kt)$ of length $\H_k(n)$ that equals ${a_k}^n t^{\H_k(n)}$ in $G_k$.
 \end{lemma}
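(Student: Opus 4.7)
My plan is to prove, by induction on $\H(w)$, the following strengthening of the statement: for any hydra $w$ on the letters $a_1, \ldots, a_k$ (that is, any positive word in those letters), there exists a positive word $u = u(a_1 t, \ldots, a_k t)$ of length $\H(w)$ with $w \, t^{\H(w)} = u$ in $G_k$. The lemma then follows by setting $w = {a_k}^n$, since --- as observed in the proof of Proposition~\ref{Hercules always wins} --- no letter of index exceeding $k$ ever appears during Hercules' battle with ${a_k}^n$, so the resulting word genuinely uses only the intended generators $a_1 t, \ldots, a_k t$.

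The base case $\H(w) = 0$ forces $w = \varepsilon$, and the empty word works. For the inductive step I would write $w = a_{i_1} w'$ with $1 \leq i_1 \leq k$. The defining relations of $G_k$ give $a_i t = t \, \theta(a_i)$ for every $i$, and hence $w' t = t \, \theta(w')$ in $G_k$. Consequently
$$w \, t^{\H(w)} \ = \ a_{i_1} (w' t) \, t^{\H(w) - 1} \ = \ (a_{i_1} t) \, \theta(w') \, t^{\H(w) - 1}$$
in $G_k$. Since one strike converts the hydra $w$ into $\theta(w')$, we have $\H(w) - 1 = \H(\theta(w'))$, and $\theta(w')$ is again a hydra on $a_1, \ldots, a_k$. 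The inductive hypothesis applied to $\theta(w')$ supplies a positive word $u'$ of length $\H(\theta(w'))$ in $a_1 t, \ldots, a_k t$ with $\theta(w') \, t^{\H(\theta(w'))} = u'$, and concatenation gives $w \, t^{\H(w)} = (a_{i_1} t) \, u'$, a positive word of length $\H(w)$ as required.

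This argument is essentially mechanical once the right generalisation has been isolated. The only conceptual point is the observation that the defining relations $t^{-1} a_i t = \theta(a_i)$ encode the hydra's regeneration rule \emph{exactly}, so that each strike of Hercules --- removal of the leading letter followed by regeneration --- corresponds precisely to peeling one factor $(a_{i_1} t)$ off the left of $w \, t^{\H(w)}$. There is no real obstacle beyond recognising this correspondence and checking that the bound $i_1 \leq k$ is preserved under the regeneration step, which is immediate from the definition of $\theta$.
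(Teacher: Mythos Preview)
Your proof is correct and takes essentially the same approach as the paper: both identify that peeling off a leading factor $(a_{i_1}t)$ from $w\,t^{\H(w)}$ corresponds exactly to one Hercules strike, and iterate until the hydra is exhausted. You have simply made the induction explicit and stated the natural generalisation to arbitrary hydra $w$, whereas the paper presents the same mechanism via an illustrative calculation and appeals to Proposition~\ref{Hercules always wins}.
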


\begin{proof}
Consider the following calculation in which successive $t$ are moved to the front and paired off with the $a_i$.  [We illustrate the calculation in the case $k \geq 3$ and $n \geq 2$ --- for $k=2$, the letters $a_{k-2}$ would not appear and for $k=1$, neither would the $a_{k-1}$.]
\begin{align*}
{a_k}^n t^{\H_k(n)}  & = \  (a_k t)   \  t^{-1} {a_k}^{n-1} t  \  t^{\H_k(n)-1}  \\
& = \  (a_k t) \      (a_k a_{k-1})^{n-1}   \  t^{\H_k(n)-1}  \\
& = \  (a_k t) \     (a_k t)   \  t^{-1} a_{k-1} (a_k a_{k-1})^{n-2}  t  \  t^{\H_k(n)-2}   \\
 & = \  (a_k t)    \  (a_k t)  \    a_{k-1} a_{k-2} (a_k a_{k-1}  a_{k-1} a_{k-2})^{n-2}  \  t^{\H_k(n)-2} \\
&  \ \ \vdots
\end{align*}
A van~Kampen diagram displaying this calculation in the case $k=2$ and $n=4$ is shown in Figure~\ref{triangular diagram}.  

 \begin{figure}[ht]
\psfrag{a}{$a_2$}
\psfrag{c}{$a_1$}
\psfrag{t}{$t$}
\centerline{\epsfig{file=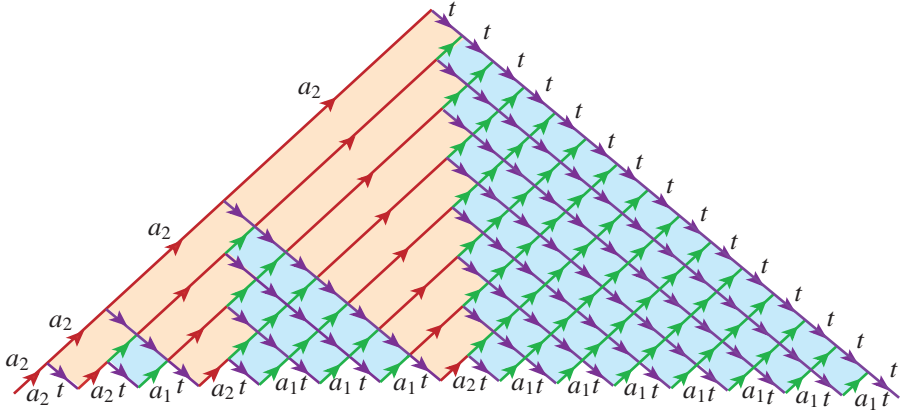}} \caption{A van~Kampen diagram showing that ${a_2}^4   t^{15} = u_{2,4}$ in $G_2$    where $u_{2,4} = a_2t \, a_2t \, a_1t \, a_2t \, (a_1t)^3 \, a_2t \, (a_1t)^7$.} \label{triangular diagram}
\end{figure}

One sees the  Hercules--versus--the--hydra battle 
$${a_k}^n \ \to \   (a_k a_{k-1})^{n-1}  \ \to \   a_{k-1} a_{k-2} (a_k a_{k-1} a_{k-1} a_{k-2})^{n-2} \  \to \  \cdots$$
being played out in this calculation. The pairing off of a $t$ with an $a_i$ corresponds to a decapitation, and the conjugation by $t$ that moves that $t$ into place from the right--hand end causes a hydra--regeneration for the intervening subword.   So by Proposition~\ref{Hercules always wins}, after $\H_k(n)$ steps we have  a positive word on $u_{k,n} = u_{k,n}(a_1t, \ldots, a_kt)$, and its length is $\H_k(n)$.
\end{proof}

Our next proposition establishes  that $\Dist_{H_k}^{G_k} \succeq \H_k$ for all $k \geq 2$.  The case $k=1$ is straightforward: $H_1 \cong \Z$ is undistorted in $G_1 \cong \Z^2$   and   $\H_1(n) =n$.   The calculation in the proof of the proposition is illustrated by a van~Kampen diagram in Figure~\ref{distortion diagram} in the case $k=2$ and $n=4$ --- the idea is that  a copy of the diagram from Figure~\ref{triangular diagram} fits together with its mirror image along intervening $a_1$-- and $a_2$--corridors to make a diagram demonstrating the equality of a freely reduced word of extreme length on $a_1t, \ldots, a_kt$ with a short word on $a_1, \ldots, a_k, t$.

\begin{prop}
For all $k \geq 2$ and $n \geq 1$, there is a reduced word of length $2 \mathcal{H}_k(n) +3$ on the free basis $a_1t, \ldots, a_kt$ for $H_k$ which, in $G_k$, equals a word of length $2n+4$ on $a_1, \ldots, a_k, t$.  
\end{prop}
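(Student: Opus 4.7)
The plan is to define $w := u_{k,n} \cdot (a_2 t)(a_1 t)(a_2 t)^{-1} \cdot u_{k,n}^{-1}$, where $u_{k,n}$ is the positive word produced by Lemma~\ref{big triangle}. Its length on the basis is $\H_k(n) + 3 + \H_k(n) = 2\H_k(n) + 3$, as required.

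Free-reducedness is a seam check. Since $u_{k,n}$ is positive, it ends in a positive basis letter --- in fact $(a_1 t)$ when $n \geq 2$ (because the Hercules--versus--the--hydra battle terminates by striking off a string of $a_1$'s once the higher-index letters have been eliminated), and $(a_k t)$ when $n=1$. The middle word $(a_2t)(a_1t)(a_2t)^{-1}$ begins with the positive letter $(a_2t)$ and ends with $(a_2t)^{-1}$, so at neither seam do the adjoining basis letters form an inverse pair (they either differ, or coincide with the same sign).  The middle is also visibly freely reduced internally.

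For the equality in $G_k$, substitute $u_{k,n} = a_k^n t^{\H_k(n)}$ (from Lemma~\ref{big triangle}) and observe that $(a_2 t)(a_1 t)(a_2 t)^{-1} = a_2 t a_1 a_2^{-1}$ in $G_k$, so
\[
w \;=\; a_k^n \cdot t^{\H_k(n)} (a_2 t a_1 a_2^{-1}) t^{-\H_k(n)} \cdot a_k^{-n}.
\]
Push the $t^{\pm \H_k(n)}$ factors inwards using $t a_1 = a_1 t$ and $t a_2 = a_2 a_1^{-1} t$ (equivalently, $\theta^{-m}(a_1) = a_1$ and $\theta^{-m}(a_2) = a_2 a_1^{-m}$).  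The $a_1^{-\H_k(n)}$ produced on the left of the middle exactly cancels the $a_1^{+(\H_k(n)+1)}$ produced on the right, collapsing the conjugate to $a_2 a_1^2 a_2^{-1} t$.

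This exhibits $w$ as $a_k^n a_2 a_1^2 a_2^{-1} t a_k^{-n}$, a word of length $2n+5$ --- one too many. The main obstacle is the final one-letter saving: the defining relation $t^{-1} a_2 t = a_2 a_1$ rearranges to $a_2^{-1} t = t a_1^{-1} a_2^{-1}$, and combining this with $a_1 t = t a_1$ gives $a_2 a_1^2 a_2^{-1} t = a_2 a_1^2 t a_1^{-1} a_2^{-1} = a_2 t a_1 a_2^{-1}$.  Hence $w = a_k^n a_2 t a_1 a_2^{-1} a_k^{-n}$ in $G_k$, a word of length exactly $2n + 4$ on $a_1, \ldots, a_k, t$.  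The van Kampen diagram promised in Figure~\ref{distortion diagram} is just a picture of this calculation: the two copies of the triangle from Figure~\ref{triangular diagram} represent the conjugation by $u_{k,n}$ and by $u_{k,n}^{-1}$, and the intervening $a_1$-- and $a_2$--corridors carry the $a_1^{\pm}$ cancellation and the final $t$--commutation.
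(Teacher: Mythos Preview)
Your proof is correct and arrives at exactly the same pair of words as the paper: the $H_k$-word $u_{k,n}(a_2t)(a_1t)(a_2t)^{-1}u_{k,n}^{-1}$ and the $G_k$-word $a_k^n a_2 t a_1 a_2^{-1} a_k^{-n}$. The free-reducedness argument is fine (indeed, all that is needed is that $u_{k,n}$ is positive and non-empty, so your identification of the specific last letter is more than required).

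The only difference is in how the equality in $G_k$ is verified. The paper observes that $a_2^{-1}ta_2 = ta_1^{-1}$, hence $a_2^{-1}t^{\H_k(n)}a_2 = (ta_1^{-1})^{\H_k(n)}$, and since $ta_1$ commutes with this element it commutes with $a_2^{-1}a_k^{-n}u_{k,n}a_2$; conjugating $ta_1$ by $u_{k,n}a_2$ then equals conjugating it by $a_k^na_2$. You instead push the powers of $t$ through directly and then clean up, taking a small detour through the length-$(2n+5)$ word $a_k^n a_2 a_1^2 a_2^{-1} t a_k^{-n}$ before one further relation brings you to $2n+4$. One minor phrasing issue: ``exactly cancels'' is imprecise, since $-\H_k(n)$ and $\H_k(n)+1$ do not sum to zero --- the leftover $a_1$, together with the explicit middle $a_1$, is precisely the $a_1^2$ you then record. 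The computation itself is correct.
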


\begin{proof} 
The relation $t^{-1} a_2 t  = a_2 a_1$ can be expressed as ${a_2}^{-1} t a_2 = t {a_1}^{-1}$, from which one obtains    ${a_2}^{-1}  t^{\H_k(n)} a_2 = (t {a_1}^{-1})^{\H_k(n)}$.  Combined with Lemma~\ref{big triangle} this gives  $$(t {a_1}^{-1})^{\H_k(n)} = {a_2}^{-1} {a_k}^{-n} u_{k,n} a_2.$$ So, as $a_1$ commutes with $t$, we deduce that $t a_1$ commutes with ${a_2}^{-1} {a_k}^{-n} u_{k,n} a_2$, and therefore $${a_k}^{n} a_2 \  t a_1 \  {a_2}^{-1} {a_k}^{-n}  \ = \ u_{k,n} a_2 \  t a_1 \  {a_2}^{-1} {u_{k,n}}^{-1}$$ in $G_k$.  The word on the left--hand side of this equation has length $2n +4$.  The word on the right--hand side freely equals $u_{k,n} \,  (a_2t) \,  (a_1 t) \, ({a_2}t)^{-1} \, {u_{k,n}}^{-1}$, which, viewed as a word on $a_1t, \ldots, a_kt$, is  freely reduced and has length $2 \mathcal{H}_k(n) + 3$, since $u_{k,n}$ is a positive word.     
\end{proof}

 \begin{figure}[ht]
\psfrag{a}{$a_2$}
\psfrag{b}{$a_1$}
\psfrag{A}{${a_2}^4$}
\psfrag{t}{$t$}
\psfrag{u}{$u_{2,4}$}
\centerline{\epsfig{file=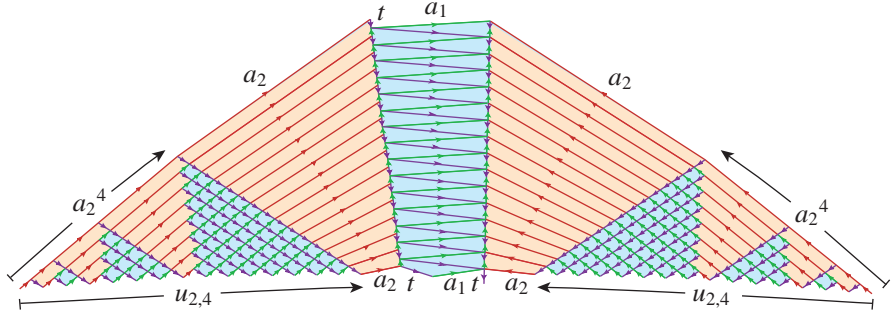}} \caption{A van~Kampen diagram demonstrating the equality \newline   ${a_2}^4 \, a_2 t a_1 {a_2}^{-1}   {a_2}^{-4} = u_{2,4} \, (a_2 t) \, (a_1t)  \, (a_2t)^{-1}  {u_{2,4}}^{-1}$ in $G_2$.} \label{distortion diagram}
\end{figure}

\section{Recursive structure of words} \label{Upper bound preliminaries1}

This section contains preliminaries that will feed into the proof, presented in Section~\ref{DistortionUpper}, that $\Dist^{G_k}_{H_k} \preceq A_k$.  In outline, we will bound  the distortion of $H_k$ in $G_k$  by first supposing $u = u(t,a_1,  \ldots, u_k)$ represents an element of $H_k$.  We will shuffle all the $t^{\pm 1}$ in $u$ to the start, with the effect of applying $\theta^{\pm 1}$ to each ${a_i}^{\pm 1}$ they pass.  After freely reducing, we will have a word $t^r w$ where $w = w(a_1, a_2, \ldots)$.   We will then look to carry the $t^r$ back through $w$ from left to right, converting all it passes to a word on $a_1t, a_2t, \ldots$ which will represent an element of $G_k$ since the indices of the $a_i$ will be no higher than those occurring in $w$.  Estimating the length of this word will give an upper bound on $\Dist^{G_k}_{H_k}$.

For convenience, we work with the group $G$ and its subgroup $H$ defined in Section~\ref{wild distortion}. 

When carrying the power of $t$ through $w$ we will face the problem of whether a word $t^r w$, where $w = w(a_1, a_2, \ldots)$, represents an element of a coset $Ht^s$ in $G$ for some $s \in \Z$.  We will see that the answer is not always affirmative --- these cosets do not cover $G$.
However, if  $t^r w = \sigma t^s$ for some $\sigma = \sigma(a_1t, a_2t, \ldots)$ and some $s \in \Z$, then  $\sigma$ is unique up to free--equivalence since $H$ is free  (Proposition~\ref{freeness of H}) and  $s$ is unique by our next lemma.  Indeed, this implies that $Ht^{s_1}$ and $Ht^{s_2}$ are equal precisely when $s_1 = s_2$.   

\begin{lemma} \label{t^k in H}
If $\ell \in \Z$ and $t^{\ell} \in H$, then ${\ell}=0$.
\end{lemma}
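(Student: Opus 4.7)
The plan is to invoke Proposition~\ref{prop5}(i) directly. Suppose $\ell \in \Z$ and $t^{\ell} \in H$. Then there is some word $w(a_1t, a_2t, \ldots)$ representing $t^\ell$ in $G$; since this word involves only finitely many generators, we may fix $k$ large enough that $w = w(a_1t, \ldots, a_kt)$ and regard the identity $w = t^\ell$ as holding in $G_k$ (using that the canonical map $G_k \to G$ is injective, as noted in the introduction).

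Let $u$ be the free reduction of $w$ as a word on the letters $(a_it)^{\pm 1}$. Then $u = t^\ell$ in $G_k$. Since $G_k = F(a_1, \ldots, a_k) \rtimes \langle t \rangle$ is free-by-cyclic, every element has a unique free-by-cyclic normal form $v t^r$ with $v \in F(a_1,\ldots, a_k)$ reduced; the normal form of $t^\ell$ is plainly given by $v = \varepsilon$ and $r = \ell$. Proposition~\ref{prop5}(i) therefore yields $u = \varepsilon$ as a word, so $t^\ell = 1$ in $G_k$.

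Finally, $t$ has infinite order in $G_k$: the surjection $G_k \onto \Z$ sending each $a_i \mapsto 0$ and $t \mapsto 1$ is well-defined on the given presentation (each defining relator maps to $0$), and it sends $t^\ell$ to $\ell$. Hence $t^\ell = 1$ forces $\ell = 0$, as required.

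There is no real obstacle here; the only thing to verify is that Proposition~\ref{prop5}(i) is applicable, which it is because the normal form of $t^\ell$ has trivial $F$-part. The lemma is essentially a direct corollary of the freeness argument already carried out.
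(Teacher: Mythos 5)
Your proof is correct, but it takes a different route than the paper's. The paper argues by contradiction using the freeness of $H$ (Proposition~\ref{freeness of H}) together with the classical fact that free groups contain no $\Z^2$ subgroup: if $t^\ell \in H$ for some $\ell \neq 0$, then since $a_1$ commutes with $t$, the elements $a_1t$ and $t^\ell$ commute and generate a copy of $\Z^2$ inside $H$, a contradiction. You instead go one level deeper, bypassing the freeness statement and applying Proposition~\ref{prop5}(i) directly to the normal form $\varepsilon \cdot t^\ell$, then finishing with the observation that $t$ has infinite order. Both arguments are short and trace back to Proposition~\ref{prop5}; yours is a bit more ``first principles'' (working straight from the normal form), while the paper's is slightly slicker in that it leverages the already-packaged freeness result and a standard fact about free groups. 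One small streamlining: the infinite order of $t$ follows immediately from the free-by-cyclic decomposition $G_k = F(a_1,\ldots,a_k) \rtimes \langle t \rangle$ with $\langle t \rangle \cong \Z$, so the auxiliary homomorphism to $\Z$ is not really needed.
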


\begin{proof}
Were $t^{\ell} \in H$ for some integer ${\ell} \neq 0$, then $\Z^2 \cong  \langle a_1t, t^{\ell} \rangle$ would be a  subgroup of $H$ contra to the freeness of  $H$ established in Proposition~\ref{freeness of H}.
\end{proof}

Our next lemma will  be the crux of our method for establishing an upper bound on distortion.  It identifies recursive structure that will allow us to analyse the process of passing a power of $t$ through a word  $w = w(a_1, a_2, \ldots)$, so as to leave behind a word on $a_1 t, a_2t, \ldots$.  

 For a non--empty freely--reduced word $w= w(a_1, a_2, \ldots)$, define the \emph{rank of $w$} to be the highest $k$ such that ${a_k}^{\pm 1}$ occurs in $w$.  We define the empty word to have rank $0$.
For an integer $k \geq 1$, define a \emph{piece of  rank $k$} to be a freely--reduced word ${a_k}^{\epsilon_1}  \pi {a_k}^{-\epsilon_2}$ where $\pi = \pi(a_1, \ldots, a_{k-1})$ and $\epsilon_1, \epsilon_2 \in \set{0,1}$.  Notice that  a piece of rank $k$ will always also be a piece of rank $k+1$ and that the empty word is a piece of rank $k$ for every $k$.  

For a non--empty freely--reduced word $w$ of rank $k$, define the \emph{number of pieces in $w$} to be the least integer $m$ such that $w$ can be expressed as a concatenation  $w_1 \ldots w_m$ of subwords $w_i$ each of which is a piece of rank $k$.  (We say the empty word is composed of $0$ pieces.) 
 Observe that 
\begin{enumerate}
\item   each ${a_k}$ and ${a_k}^{-1}$ in $w$ is the first or last letter of some $w_i$, respectively; 
\item for $i=1, \ldots, m-1$, either the final letter of $w_i$ is ${a_k}^{-1}$ or the first of $w_{i+1}$ is $a_k$, but never both; and  
\item if ${a_k}^{-1} \chi  a_k$ is a subword of $w$ and $\chi = \chi(a_1, \ldots, a_{k-1})$, then  $\chi = w_i$ for some $i$. 
\end{enumerate}
In particular,  $w_1$, \ldots, $w_m$ are uniquely determined by the locations of the ${a_k}^{\pm 1}$ in $w$, and so we call the list of subwords $w_1, \ldots, w_m$ \emph{the partition of $w$ into pieces}.  
 
For example,  $w := {a_3}^{-1} a_1 a_2 a_3 {a_2}^{-1} a_3 {a_1}^{-1} {a_3}^{-1}$ has rank $3$ and its  partition into pieces is $w = w_1 w_2 w_3 w_4$ where $w_1 = {a_3}^{-1}$, $w_2 = a_1 a_2$, $w_3 = a_3 {a_2}^{-1}$, and $w_4 = a_3 {a_1}^{-1} {a_3}^{-1}$.

\begin{lemma} \label{Key Lemma}
Suppose $w = w(a_1, \ldots, a_k)$ is  a non--empty freely--reduced word of rank $k$ and $r$ and $s$ are integers such that  $t^r w \in H t^s$.  Let $w = w_1 \ldots w_n$ be the partition of $w$ into pieces.  Then there exist integers $r=r_0, r_1, \ldots, r_n =s$ such that $t^{r_i} w_{i+1} \in H t^{r_{i+1}}$ for each $i$.
\end{lemma}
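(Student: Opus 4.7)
The plan is induction on $n$, the number of pieces. The base $n=1$ is immediate (take $r_0 = r$, $r_1 = s$). For the inductive step it suffices to find a single integer $r_1$ with $t^r w_1 \in H t^{r_1}$: granted this, writing $t^r w_1 = \sigma_1 t^{r_1}$ with $\sigma_1 \in H$ and cancelling against $t^r w = \sigma t^s$ (where $\sigma \in H$ is the unique element furnished by Proposition~\ref{freeness of H} and Lemma~\ref{t^k in H}) gives $t^{r_1} w_2 \cdots w_n \in H t^s$, and the inductive hypothesis applied to $w_2 \cdots w_n$---a word with $n-1$ pieces of rank at most $k$---supplies $r_2, \ldots, r_{n-1}$.

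To locate $r_1$, express $\sigma$ as a reduced word in the free basis $\{a_i t\}^{\pm 1}$ of $H$. The crux is a structural claim: the $(a_k t)^{\pm 1}$-letters of $\sigma$ are in an order- and sign-preserving bijection with the $a_k^{\pm 1}$-letters of $w$. To justify this, compute the free-by-cyclic normal form of both sides of $t^r w = \sigma t^s$; the free part equals the freely reduced form of $\theta^{-r}(w)$. Because $\theta^{-r}(a_k) = a_k \cdot \xi_r$ and $\theta^{-r}(a_k^{-1}) = \xi_r^{-1} \cdot a_k^{-1}$ for some word $\xi_r$ of rank $<k$, and because the piece decomposition together with the reducedness of $w$ ensures that consecutive rank-$k$ letters of $w$ are always separated by non-trivial rank-$<k$ material, each $a_k^{\pm 1}$ in $w$ contributes one distinct rank-$k$ letter of the same sign to the freely reduced $\theta^{-r}(w)$. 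Dually, processing $\sigma$ left-to-right using the normal-form moves $tv = \theta^{-1}(v)t$ and $t^{-1}v = \theta(v)t^{-1}$ shows each $(a_k t)^{\pm 1}$ of $\sigma$ also contributes a distinct rank-$k$ letter of matching sign, since reducedness of $\sigma$ in the free basis forbids rank-$k$ cancellation. Matching these in order yields the bijection.

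With the bijection in hand, the boundary between $w_1$ and $w_2$---which by the piece definition is marked by exactly one $a_k^{\pm 1}$, either the last letter $a_k^{-1}$ of $w_1$ or the first letter $a_k$ of $w_2$---corresponds to a unique $(a_k t)^{\pm 1}$ of $\sigma$. Split $\sigma = \sigma_1 \sigma'$ at that letter, placing the marker in $\sigma_1$ if it closes $w_1$ or in $\sigma'$ if it opens $w_2$; both halves are reduced subwords, hence elements of $H$. Setting $r_1 := r - |\sigma_1|_t$ (signed exponent sum) arranges that $\sigma_1 t^{r_1}$ and $t^r w_1$ have the same $t$-exponent $r$ in normal form. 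Their free parts agree as well: both are the prefix of the freely reduced $\theta^{-r}(w)$ terminating at the boundary landmark (no cancellation occurs at the boundary in $\theta^{-r}(w) = \theta^{-r}(w_1) \cdot \theta^{-r}(w_2 \cdots w_n)$ since the landmark $a_k^{\pm 1}$ sits next to a letter of rank $<k$). Hence $\sigma_1 t^{r_1} = t^r w_1$, so $t^r w_1 \in H t^{r_1}$ as required.

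The principal obstacle is the structural bijection, which hinges on tracking free reductions carefully on both sides of $t^r w = \sigma t^s$. On the $w$-side the piece decomposition provides enough rank-$<k$ material between consecutive rank-$k$ landmarks that they cannot pair off in the freely reduced $\theta^{-r}(w)$; on the $\sigma$-side reducedness in the free basis of $H$ similarly prevents the rank-$k$ letters contributed by $(a_k t)^{\pm 1}$-letters from cancelling during normal-form computation. Once this is in hand, choosing the split and verifying that both decompositions of the freely reduced $\theta^{-r}(w)$ agree on their common prefix up to the boundary landmark is essentially routine.
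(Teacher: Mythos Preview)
Your strategy—establishing an order- and sign-preserving bijection between the $(a_kt)^{\pm1}$-letters of $\sigma$ and the $a_k^{\pm1}$-letters of $w$, then splitting $\sigma$ accordingly—is essentially the paper's own approach. The paper partitions $v$ (your $\sigma$) into rank-$k$ pieces $v_1,\ldots,v_m$, processes each to a reduced word $w'_i$, and shows that $w'_1,\ldots,w'_m$ is precisely the piece partition $w_1,\ldots,w_n$ of $w$; your bijection is equivalent to this, and your induction on $n$ merely peels off one piece at a time rather than handling them all at once.

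The genuine gap is in your justification of the $\sigma$-side of the bijection. You assert that ``reducedness of $\sigma$ in the free basis forbids rank-$k$ cancellation,'' but this is precisely the non-trivial heart of the lemma, not a consequence of reducedness alone. Consider a subword $(a_kt)\,\tau\,(a_kt)^{-1}$ of $\sigma$ with $\tau$ a nonempty reduced word in $(a_it)^{\pm1}$ for $i<k$. Reducedness of $\sigma$ tells you only that $\tau\neq\varepsilon$; it does \emph{not} by itself prevent the $a_k$ and $a_k^{-1}$ contributed by the flanking letters from cancelling when you pass to normal form. Such cancellation forces the rank-$<k$ material between them to be freely trivial, so the whole subword---an element of $H$---has normal form $t^\ell$ for some $\ell$. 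To reach a contradiction one must now invoke Lemma~\ref{t^k in H} (giving $\ell=0$) and then Proposition~\ref{freeness of H} (giving that the subword is the empty word, contradicting $\tau\neq\varepsilon$). The paper carries out exactly this argument; you cite these two results only for the uniqueness of $\sigma$ and $s$ at the outset, not at this step where they do the essential work.

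A smaller slip: your claim that consecutive rank-$k$ letters of $w$ are ``always separated by non-trivial rank-$<k$ material'' is false (take $w={a_k}^2$). The conclusion you want---that all rank-$k$ letters survive in the freely reduced $\theta^{-r}(w)$---is correct, but for the right reason: adjacent opposite-sign pairs $a_k^{\pm1}a_k^{\mp1}$ are already forbidden by reducedness of $w$, while same-sign pairs cannot cancel.
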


\begin{proof}
As $t^rw \in H t^s$, there is some reduced word $v = v(a_1t,  \ldots, a_kt)$ such that $t^r w = v t^s$.   Form the analogue of a \emph{partition into pieces} for $v$ --- that is, express $v$ as  a concatenation $v_1 \ldots v_m$ of subwords $v_i$ each of the form  ${(a_k t)}^{\epsilon_1} \, \tau \, {(a_k t)}^{-\epsilon_2}$ where $\tau = \tau(a_1t, \ldots, a_{k-1}t)$ and $\epsilon_1, \epsilon_2 \in \set{0,1}$ and  $m$ is minimal.

Note that $v$ is non--empty as otherwise $w$ would equal $t^{s-r}$ in $G$ and so be be the empty word by the free-by-cyclic structure of $G$.  Note also that no $v_i$ is the empty word since $m$  is minimal.  

One can obtain $t^r w$ from $vt^s$  by carrying all the $t^{\pm 1}$ to the left and freely reducing. More particularly, the $t^s$ at the end of $vt^s$ and all the $t^{\pm 1}$ in $v_m$ can be collected immediately to the left of $v_m$, and then those $t^{\pm 1}$ and the $t^{\pm 1}$ in $v_{m-1}$ can be carried to the left of $v_{m-1}$, and so on.   Accordingly, inductively define $w'_m, \ldots, w'_1$ and $r_m, \ldots, r_0$ by setting $r_m  := s$ and then for $i=m, \ldots, 1$ taking $r_{i-1}$ and $w'_i = w'_i(a_1, \ldots, a_k)$ to be the unique integer and reduced word such that  $v_i t^{r_i} = t^{r_{i-1}} w'_i$.     Then $r_0 = r$ and  $w$ is (a priori) the freely reduced form of $w'_1 \ldots  w'_m$.
We claim that, in fact, $w'_1 \ldots  w'_m$ is the partition of $w$ into pieces of rank $k$  --- that is, $m=n$ and $w'_i = w_i$ for all $i$.  This will suffice  to establish the lemma.

To prove this claim, we will show that for all $i$, if $v_i = {(a_k t)}^{\epsilon_1} \, \tau \, {(a_k t)}^{-\epsilon_2}$ where $\tau = \tau(a_1t, \ldots, a_{k-1}t)$ and $\epsilon_1, \epsilon_2 \in \set{0,1}$, then $w'_i$ is a reduced word ${a_k}^{\epsilon_1} \, \pi \, {a_k}^{-\epsilon_2}$ for some $\pi = \pi(a_1, \ldots , a_{k-1})$.  Moreover,  if $\epsilon_1 = \epsilon_2 = 0$, then $\pi$ is not the empty word.  In particular, no $w'_i$  is the empty word.  

Well, $v_i t^{r_i} = t^{r_{i-1}} w'_i$.  Consider  the process of  carrying each $t^{\pm 1}$ in $v_it^{r_i}$ to the front of the word, applying $\theta^{\pm 1}$ to each $a_j$  they pass and then freely reducing, to give $t^{r_{i-1}} w'_i$.  Throughout this process, no new ${a_k}^{\pm 1}$ are produced and, such is $\theta$, no $a_l$ appear to the left of the $a_k$ in $v_i$ (if present) or the the right of the ${a_k}^{-1}$ (if present) ---  see (\ref{theta definition}) and (\ref{inverse eqn}).  This means that the only way $w'_i$ could fail to be a reduced word of the form ${a_k}^{\epsilon_1} \, \pi \, {a_k}^{-\epsilon_2}$ where  $\pi = \pi(a_1, \ldots , a_{k-1})$, would be for $\epsilon_1$ and $\epsilon_2 $ to both be $1$ and $\pi$  be the empty word.   But in that case, $w'_i$ would be the empty word and so $v_i$ would equal $t^{r_{i-1} - r_i}$ in $G_k$ and  $r_{i-1} - r_i$ would be $0$ by Lemma~\ref{t^k in H}.  But then  $v_i$ would be the empty word by Proposition~\ref{freeness of H_k} which, as we observed, is not the case.  Likewise, when $\epsilon_1 = \epsilon_2 = 0$,  it cannot be the case that $\pi = w'_i$ is the empty word,  as otherwise $v_i$ would again be the empty word.

So properties (i), (ii) and (iii) all apply to $w'_1$, \ldots, $w'_m$ as they are inherited the corresponding properties for $v_1$, \ldots, $v_m$.    It follows from these properties together with the fact that each $w'_i$ is reduced, that $w'_1 \ldots  w'_m$ is reduced and is the partition of $w$ into pieces of rank $k$.  
\end{proof}

\section{\texorpdfstring{Passing powers of $t$ through $\theta^n({a_k}^{\pm1})$.}{Passing powers of t  through theta-terms.}} \label{Upper bound preliminaries2}

The words $\theta^n({a_k}^{\pm1})$ will play a crucial role in our proof that $\Dist^{G_k}_{H_k} \preceq A_k$.  The next lemma reveals their recursive structure.  The first part is proved by an induction on $n$.  The second part is then an immediate consequence.

\begin{lemma} \label{expanding theta}
  \begin{align*}
     \theta^n(a_k) & \ = \  \begin{cases}
     \parbox{65mm}{$a_k \;  \theta^0(a_{k-1}) \; \theta^1(a_{k-1}) \ldots \theta^{n-1}(a_{k-1})$} \quad &n>0 \\
      a_k \quad &n=0 \\
      a_k \; \theta^{-1}({a_{k-1}}^{-1}) \; \theta^{-2}({a_{k-1}}^{-1}) \ldots \theta^n({a_{k-1}}^{-1}) \quad &n<0, \\
    \end{cases} \\
    \\
    \theta^n({a_k}^{-1})  & \ = \  \begin{cases}
      \parbox{65mm}{$\theta^{n-1}({a_{k-1}}^{-1}) \; \theta^{n-2}({a_{k-1}}^{-1}) \ldots \theta^0({a_{k-1}}^{-1}) \; {a_k}^{-1}$} \quad &n>0 \\
      {a_k}^{-1} \quad &n=0 \\
      \theta^n(a_{k-1}) \; \theta^{n+1}(a_{k-1}) \ldots \theta^{-1}(a_{k-1}) \; {a_k}^{-1} \quad &n<0. \\
    \end{cases}
  \end{align*}
\end{lemma}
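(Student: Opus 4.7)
The plan is to establish the two formulae for $\theta^n(a_k)$ by induction on $n$ (handled in two cases: $n>0$ and $n<0$), and then to deduce the two formulae for $\theta^n({a_k}^{-1})$ by inverting them. The case $n=0$ is tautological since $\theta^0$ is the identity.

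For $n>0$, I would proceed by induction on $n$. In the base case $n=1$, the definition \eqref{theta definition} gives $\theta(a_k) = a_k a_{k-1} = a_k \, \theta^0(a_{k-1})$, as required. For the inductive step, assume $\theta^n(a_k) = a_k \, \theta^0(a_{k-1}) \, \theta^1(a_{k-1}) \cdots \theta^{n-1}(a_{k-1})$ and apply the endomorphism $\theta$ term by term:
$$\theta^{n+1}(a_k) \ = \ \theta(a_k) \cdot \theta^1(a_{k-1}) \cdots \theta^{n}(a_{k-1}) \ = \ a_k \, \theta^0(a_{k-1}) \, \theta^1(a_{k-1}) \cdots \theta^{n}(a_{k-1}),$$
expanding $\theta(a_k) = a_k a_{k-1} = a_k \, \theta^0(a_{k-1})$ in the last step.

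For $n<0$, I would use that $\theta$ is an automorphism of $F(a_1, a_2, \ldots)$ (since each $a_i$ is plainly in its image, as $a_i = \theta(a_i) \theta(a_{i-1})^{-1}$ for $i>1$). Applying $\theta^{-1}$ to $\theta(a_k) = a_k a_{k-1}$ gives the base case $\theta^{-1}(a_k) = a_k \, \theta^{-1}(a_{k-1})^{-1} = a_k \, \theta^{-1}({a_{k-1}}^{-1})$. The inductive step applies $\theta^{-1}$ to the hypothesis in exactly the same telescoping fashion as in the $n>0$ case, now appending a factor $\theta^{n-1}({a_{k-1}}^{-1})$ on the right at each step.

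With the formulae for $\theta^n(a_k)$ in hand, the two formulae for $\theta^n({a_k}^{-1})$ follow by taking inverses, reversing the order of factors, and using the identity $\theta^j(a_{k-1})^{-1} = \theta^j({a_{k-1}}^{-1})$ (which holds because $\theta^j$ is a homomorphism). Concretely, for $n>0$,
$$\theta^n({a_k}^{-1}) \ = \ \bigl(\theta^n(a_k)\bigr)^{-1} \ = \ \theta^{n-1}({a_{k-1}}^{-1}) \, \theta^{n-2}({a_{k-1}}^{-1}) \cdots \theta^0({a_{k-1}}^{-1}) \, {a_k}^{-1},$$
and the $n<0$ formula is obtained identically. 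There is no genuine obstacle in this proof; the only thing to get right is the bookkeeping of exponent ranges and the order of factors after inversion.
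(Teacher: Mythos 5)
Your proof is correct and takes essentially the same route as the paper's: induction on $n$ (for positive and negative $n$ separately) to establish the first pair of formulae, then inversion to deduce the second pair. One small slip worth flagging: the parenthetical identity you use to argue $\theta$ is surjective, namely $a_i = \theta(a_i)\theta(a_{i-1})^{-1}$, holds only for $i=2$; already for $i=3$ one has $\theta(a_3)\theta(a_2)^{-1} = a_3 a_2 a_1^{-1} a_2^{-1} \neq a_3$. The intended argument is recovered by induction on $i$, writing $a_i = \theta(a_i)\,a_{i-1}^{-1}$ and using that $a_{i-1}$ already lies in the image (equivalently, $\theta$ sends the free basis $\{a_i\}$ to the free basis $\{a_1, a_2 a_1, a_3 a_2, \ldots\}$ via elementary Nielsen moves). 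Since the paper already introduces $\theta$ as an automorphism in display \eqref{theta definition}, this slip does not affect the rest of your argument.
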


When attempting to carry a power of $t$ through a word $w=w(a_1, a_2, \ldots)$, we will frequently be faced with the special case where $w$ is of the form $\theta^n({a_k}^{\pm1})$.  We now focus on this situation.

\begin{defn} \label{def lambda}
  Define $$\Lambda = \bigcup_{i\in \Z} Ht^i.$$  For each integer $k \geq 1$, define $$S_k = \{n \in \Z \, : \, \theta^n(a_k) \in \Lambda\}$$  and define the function $\phi_k : S_k \rightarrow \Z$ by setting $\phi_k(n)$ to be the unique integer satisfying $$\theta^n(a_k) t^{\phi_k(n)} \in H.$$
\end{defn}

Note that this extends the previous definition of the functions $\phi_k$ given in Section~\ref{hydra functions versus Ackermann's functions} since $\phi_k(n) = \H(\theta^n(a_k))$ for $n \geq 0$.

\begin{lemma} \label{S_i}
  \mbox{}
  \begin{enumerate}
    \item $S_1 = \Z$ and $\phi_1(n) = 1$ for all $n \in S_1$. 
    \item $S_2 = \Z$ and $\phi_2(n) = n+1$ for all $n \in S_2$.
    \item If $k \geq 3$, then $S_k = \N$.
  \end{enumerate}
\end{lemma}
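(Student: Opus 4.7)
Parts (i) and (ii) reduce to direct computations. For (i), since $\theta$ fixes $a_1$ we have $\theta^n(a_1) = a_1$ for every $n \in \Z$, whence $\theta^n(a_1)\cdot t = a_1 t \in H$; the value $\phi_1(n) = 1$ is unique because the cosets $Ht^i$ are pairwise disjoint by Lemma~\ref{t^k in H}. For (ii), I first verify $\theta^n(a_2) = a_2 a_1^n$ for every $n \in \Z$ (immediate from $\theta(a_2) = a_2 a_1$, $\theta(a_1) = a_1$, and inversion). Since $a_1$ commutes with $t$ in $G$, one has $(a_2 t)(a_1 t)^n = a_2 a_1^n t^{n+1}$, exhibiting $\theta^n(a_2) t^{n+1} \in H$ and so $\phi_2(n) = n+1$.

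For (iii), the inclusion $\N \subseteq S_k$ is the hydra interpretation already in play in Section~\ref{hydra functions versus Ackermann's functions} and Lemma~\ref{big triangle}: for $n \geq 0$ the positive word $\theta^n(a_k)$ is a hydra, and tracking the $\H(\theta^n(a_k))$ strikes required to kill it produces $\theta^n(a_k) t^{\H(\theta^n(a_k))} \in H$. For the reverse $S_k \subseteq \N$ I argue by induction on $k \geq 3$. Suppose $n \in S_k$ with $n \leq -1$. By Lemma~\ref{expanding theta}, $\theta^n(a_k) = a_k\,\pi$ where $\pi := \theta^{-1}(a_{k-1}^{-1})\,\theta^{-2}(a_{k-1}^{-1}) \cdots \theta^n(a_{k-1}^{-1})$ lies in $F(a_1,\ldots,a_{k-1})$. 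Set $u := \theta^n(a_k) t^{\phi_k(n)} \in H$; since $a_k t \in H$, premultiplying by its inverse gives $(a_k t)^{-1} u = \theta(\pi)\, t^{\phi_k(n)-1} \in H$. Invoking Lemma~\ref{expanding theta} once more, $\theta(\pi) = a_{k-1}^{-1}\,\theta^{-1}(a_{k-1}^{-1}) \cdots \theta^{n+1}(a_{k-1}^{-1})$ is freely reduced of rank $k-1$, and its $|n|$ occurrences of $a_{k-1}^{-1}$ (with no $a_{k-1}$) sit at the ends of these $|n|$ factors, so the partition of $\theta(\pi)$ into pieces of rank $k-1$ consists of exactly these factors; in particular the first piece is $p_1 = a_{k-1}^{-1}$.

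Applying Lemma~\ref{Key Lemma} to $t^0\,\theta(\pi) \in H t^{1 - \phi_k(n)}$ then produces an integer $r_1$ with $a_{k-1}^{-1} \in H t^{r_1}$. Inverting and converting $t^{r_1} a_{k-1}$ to free--by--cyclic normal form $\theta^{-r_1}(a_{k-1}) t^{r_1}$ gives $\theta^{-r_1}(a_{k-1}) t^{r_1} \in H$, so $-r_1 \in S_{k-1}$ and $\phi_{k-1}(-r_1) = r_1$. In the base case $k=3$, part (ii) forces $r_1 = -r_1 + 1$, \ie $r_1 = 1/2 \notin \Z$, a contradiction. In the inductive step $k \geq 4$, the hypothesis $S_{k-1} = \N$ forces $r_1 \leq 0$, whereas \eqref{phi of 0} and \eqref{phi geq 1} of Lemma~\ref{lem1} give $r_1 = \phi_{k-1}(-r_1) \geq 1$, again a contradiction. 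Hence no $n \leq -1$ belongs to $S_k$, completing the induction. The main obstacle is that Lemma~\ref{Key Lemma} applied directly to $\theta^n(a_k)$ at rank $k$ is vacuous because that word has only one rank-$k$ piece; the resolution is to peel off the leading $(a_k t)$ to expose the rank-$(k-1)$ word $\theta(\pi)$ whose very first piece is the isolated letter $a_{k-1}^{-1}$, and apply Lemma~\ref{Key Lemma} there to feed into the inductive hypothesis on $S_{k-1}$.
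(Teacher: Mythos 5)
Your proof is correct and follows essentially the same route as the paper: peel off $(a_k t)^{-1}$ to expose a rank-$(k-1)$ word whose first piece is the isolated letter $a_{k-1}^{-1}$, apply Lemma~\ref{Key Lemma} to get $a_{k-1}^{-1}\in Ht^{r}$, deduce $r=\phi_{k-1}(-r)$, and derive a contradiction from part (ii) when $k=3$ or from the inductive hypothesis $S_{k-1}=\N$ together with \eqref{phi geq 1} when $k\geq 4$. You have merely filled in the easy verifications (parts (i), (ii), and $\N\subseteq S_k$) that the paper leaves to the reader and made explicit the structure of the piece decomposition, but the argument is the same.
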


\begin{proof}
  It is easy to check that $S_1 = S_2 =\Z$, $\phi_1(n) = 1$, $\phi_2(n) = n+1$ and that $\N \subseteq S_k$ for all $k$.

  Let $k \geq 3$ and suppose that $n < 0$ lies in $S_k$.  Since $\theta^n(a_k) t^{\phi_k(n)}$ lies in $H$, so does $(a_k t)^{-1} \theta^n(a_k) t^{\phi_k(n)} = {a_{k-1}}^{-1} \theta^{-1}({a_{k-1}}^{-1}) \ldots \theta^{n+1}({a_{k-1}}^{-1}) t^{\phi_k(n)-1}$, and hence, by Lemma~\ref{Key Lemma}, ${a_{k-1}}^{-1}$ lies in $H t^r$ for some $r$.  It follows that $\theta^{-r}(a_{k-1}) t^r \in H$ and so $r = \phi_{k-1}(-r)$.  If $k=3$, this is contradiction, since it implies $r=-r+1$.  If $k >3$, then $-r \in S_{k-1}$, and so, by the induction hypothesis, $r \leq 0$.  But then $\phi_{k-1}(-r) \geq 1$, by \eqref{phi geq 1}, and hence $r \geq 1$, a contradiction.
\end{proof}

Let $d_H$ denote the word metric on $H$ with respect to the generating set $a_1t, a_2t, \ldots$.

\begin{lemma} \label{Lemma1}
  If $n \in S_k$ and $h = \theta^n(a_k) t^{\phi_k(n)}$, then $d_H(1, h) = \phi_k(|n|)$.
\end{lemma}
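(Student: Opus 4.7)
The plan is to exhibit, for each $n \in S_k$, a word on the free generators $a_1t, a_2t, \ldots$ of $H$ that (i) represents $h$ in $G$, (ii) has length exactly $\phi_k(|n|)$, and (iii) is freely reduced on this basis. Since $H$ is free on $\{a_1t, a_2t,\ldots\}$ by Proposition~\ref{freeness of H}, producing such a word immediately gives $d_H(1,h)=\phi_k(|n|)$; no separate lower-bound argument is required, as a freely reduced word on the basis realizes the word length of the element it represents.

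For $n \geq 0$, I would reenact the Hercules--versus--the--hydra battle as in the proof of Lemma~\ref{big triangle}, but starting from the hydra $w := \theta^n(a_k)$ (positive by Lemma~\ref{expanding theta}) rather than from ${a_k}^n$. The key identity is $a_j w' \cdot t = (a_j t) \cdot \theta(w')$ in $G$: multiplying $w\, t^{\phi_k(n)}$ on the right by the leftmost unused $t$ peels off one generator $a_jt$ (with $1 \leq j \leq k$, since only indices present in $w$ appear) while applying one round of hydra regeneration to the surviving portion. By Proposition~\ref{Hercules always wins}, after exactly $\phi_k(n) = \H(\theta^n(a_k))$ iterations the hydra is exhausted, so $h$ is expressed as a positive word of length $\phi_k(n)$ on $a_1t,\ldots, a_kt$. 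A positive word on these generators is automatically freely reduced, delivering $d_H(1,h) = \phi_k(n) = \phi_k(|n|)$.

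For $n < 0$ the problem reduces, by Lemma~\ref{S_i}, to the two cases $k \in \{1,2\}$. Since $\theta$ fixes $a_1$, Lemma~\ref{expanding theta} gives $\theta^n(a_1) = a_1$ (so $h = a_1 t$, a single generator, matching $\phi_1(|n|) = 1$) and $\theta^n(a_2) = a_2\, {a_1}^{-|n|}$. In the second case, using that $a_1$ commutes with $t$ and that $\phi_2(n) = n+1$, a direct rearrangement gives $h = (a_2 t)(a_1 t)^{-|n|}$, a freely reduced word on the basis of $H$ of length $|n| + 1 = \phi_2(|n|)$.

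The only delicate point — more a feature of the setup than an obstacle — is the observation that freeness of $H$ on $\{a_it\}$ means any freely reduced expression we construct automatically attains the word-metric length. Thus the whole lemma comes down to the hydra construction for $n \geq 0$ together with a one-line computation for the sporadic negative cases allowed by $S_1$ and $S_2$.
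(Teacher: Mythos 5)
Your proof is correct and follows essentially the same route as the paper: reduce to the positive case via Lemma~\ref{S_i}, use positivity of $\theta^n(a_k)$ plus freeness of $H$ to conclude that the geodesic word on $\{a_it\}$ is positive (hence of length exactly the $t$-exponent $\phi_k(n)$), and handle the sporadic negative-$n$ cases for $k\in\{1,2\}$ by direct computation giving $(a_2t)(a_1t)^n$. The paper's proof for $k\geq3$ is terser, invoking positivity and ``hence''; your explicit Hercules-style construction is exactly the content filling that ``hence''.
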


\begin{proof}
  If $k = 1$, then the result is obvious.  If $k = 2$, then $h = a_2 {a_1}^n t^{n+1} = (a_2 t) (a_1 t)^n$ so $d_H(1,h) = 1 + |n| = \phi_k(|n|)$.  If $k \geq 3$, then $n \geq 0$.  Thus the word $\theta^n(a_k)$ is positive and hence $d_H(1,h) = \phi_k(n) = \phi_k(|n|)$.
\end{proof}

\begin{lemma} \label{lem2}

  \mbox{}
  \begin{enumerate}
    \item Let $h = t^r \theta^i(a_k) t^{-s}$.  Then $h \in H$ if and only if $i-r \in S_k$ and $s=r-\phi_k(i-r)$.

    \item Let $h = t^r \theta^i({a_k}^{-1}) t^{-s}$.  Then $h \in H$ if and only if $i-s \in S_k$ and $r=s - \phi_k(i-s)$.
  \end{enumerate}
\end{lemma}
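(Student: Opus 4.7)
The plan is to reduce both parts to the defining property of $S_k$ and $\phi_k$ in Definition~\ref{def lambda} by rewriting $h$ in the form $\theta^{j}({a_k}^{\pm 1}) t^{m}$ and then reading off when such an element lies in $H$.

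For part (i), I would first invoke the fact that conjugation by $t^{-1}$ on $F(a_1, a_2, \ldots)$ realises $\theta$: the defining relations give $t^{-1} a_j t = \theta(a_j)$ for every $j$, and so $t^{-1} w t = \theta(w)$ for every word $w$ on the $a_j$. Consequently
\[
t^{r} \theta^{i}(a_k) t^{-r} \ = \ \theta^{-r}\!\bigl(\theta^{i}(a_k)\bigr) \ = \ \theta^{i-r}(a_k),
\]
and so $h = \theta^{i-r}(a_k)\, t^{r-s}$. By Definition~\ref{def lambda}, this element lies in $H$ precisely when $\theta^{i-r}(a_k) \in \Lambda$, i.e.\ $i-r \in S_k$, and the exponent $r-s$ is the (unique) integer $\phi_k(i-r)$, i.e.\ $s = r - \phi_k(i-r)$. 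The uniqueness of $\phi_k(i-r)$ here is precisely Lemma~\ref{t^k in H} (distinct cosets $H t^{j}$ do not coincide), which is already available.

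For part (ii), I would simply pass to inverses. Since $\theta$ is an automorphism, $\theta^{i}({a_k}^{-1}) = \bigl(\theta^{i}(a_k)\bigr)^{-1}$, and therefore
\[
h^{-1} \ = \ t^{s}\, \theta^{i}(a_k)\, t^{-r}.
\]
Because $H$ is a subgroup, $h \in H$ if and only if $h^{-1} \in H$. Applying part (i) to $h^{-1}$, with the roles of $r$ and $s$ interchanged, yields $i-s \in S_k$ and $r = s - \phi_k(i-s)$, as required.

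The whole argument is essentially bookkeeping against the definitions; the only substantive content is the conjugation identity $t^{r} \theta^{i}(a_k) t^{-r} = \theta^{i-r}(a_k)$. No serious obstacle is anticipated, though one must be attentive to signs in the exponents of $\theta$ and $t$ to line up $s - r$, $r-s$, $i-r$ and $i-s$ correctly.
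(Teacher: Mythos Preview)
Your proposal is correct and follows essentially the same route as the paper's own proof: for (i) you rewrite $h$ as $\theta^{i-r}(a_k)\,t^{r-s}$ via the conjugation identity and invoke Definition~\ref{def lambda}, and for (ii) you pass to $h^{-1}=t^{s}\theta^{i}(a_k)t^{-r}$ and apply (i). The paper does exactly this, only more tersely.
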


\begin{proof}
  For (i), note that $h = \theta^{i-r}(a_k) t^{r-s}$ and apply Definition~\ref{def lambda}.  For (ii), note that $h^{-1} = t^s \theta^i(a_k) t^{-r}$ and apply (i).
\end{proof}

\begin{lemma} \label{lem3}
  If $k \geq 3$ and $t^r \theta^i({a_k}^{-1}) \in \Lambda$, then  $r <i$.
\end{lemma}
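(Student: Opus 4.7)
The plan is to unwind the hypothesis via Lemma~\ref{lem2}(ii) and then use positivity of $\phi_k$ on $\N$ together with Lemma~\ref{S_i}(iii).

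Suppose $t^r\theta^i({a_k}^{-1})\in\Lambda$. By definition of $\Lambda$ there is some $s\in\Z$ with $t^r\theta^i({a_k}^{-1})t^{-s}\in H$. Lemma~\ref{lem2}(ii) then tells us that $i-s\in S_k$ and
\[
r \ = \ s-\phi_k(i-s).
\]
Rearranging gives $i-r=(i-s)+\phi_k(i-s)$, so it suffices to prove that $(i-s)+\phi_k(i-s)>0$.

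Set $m:=i-s$, so $m\in S_k$. Since $k\geq 3$, Lemma~\ref{S_i}(iii) gives $S_k=\N$, hence $m\geq 0$. Now Lemma~\ref{lem1} supplies $\phi_k(0)=1$ via \eqref{phi of 0} and $\phi_k(m)\geq 1$ for all $m\geq 0$ via \eqref{phi geq 1}. Therefore $m+\phi_k(m)\geq 0+1=1>0$, which gives $i-r>0$, i.e.\ $r<i$, as required.

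There is no real obstacle here: the lemma is essentially bookkeeping. The only subtlety is being careful about which variable is the index of $\theta$ and which is the exponent of $t$ when invoking Lemma~\ref{lem2}(ii)  (the ${a_k}^{-1}$ case ties $i-s$, not $i-r$, to $S_k$), and noting that the hypothesis $k\geq 3$ is used precisely to ensure $S_k\subseteq\N$ so that $\phi_k(m)$ is at least $1$.
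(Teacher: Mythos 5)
Your proof is correct and follows essentially the same route as the paper's: invoke Lemma~\ref{lem2}(ii) to get $i-s\in S_k$ and $s-r=\phi_k(i-s)$, use Lemma~\ref{S_i}(iii) to conclude $i-s\geq 0$, and use \eqref{phi geq 1} to conclude $s-r\geq 1$, whence $i-r\geq 1$. The paper's version is just more compressed.
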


\begin{proof}
  If $t^r \theta^i({a_k}^{-1}) \in H t^s$, then Lemmas~\ref{S_i} and \ref{lem2} give that $i -s \geq 0$ and $s - r = \phi_k(i-s) \geq 1$.  Thus $i-r \geq 1$.
\end{proof}

The exceptional nature of $S_1$ and $S_2$ highlighted by Lemma~\ref{S_i} means that small values of $k$ will have to be treated separately in our proof.  This motivates the inclusion of the following result, a special case of Lemma~\ref{lem2}.  Note in particular that (ii) implies that $t^r \theta^i({a_2}^{-1}) \in \Lambda$ if and only if $r+i$ is odd.

\begin{lemma} \label{lem7}
  \mbox{} \begin{enumerate}
    \item Let $h = t^r \theta^i(a_2) t^{-s}$.  Then $h \in H$ if and only if $s = 2r - i - 1$.

    \item Let $h = t^r \theta^i({a_2}^{-1}) t^{-s}$.  Then $h \in H$ if and only if $s = \frac{1}{2}(r+i+1)$.
  \end{enumerate}
\end{lemma}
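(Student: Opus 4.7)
The plan is to deduce both parts as direct specialisations of Lemma~\ref{lem2} using the explicit description of $\phi_2$ provided by Lemma~\ref{S_i}.

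First I would record the two inputs we need. By Lemma~\ref{S_i}(ii), $S_2 = \Z$, so the hypothesis ``$i-r \in S_k$'' (respectively ``$i-s \in S_k$'') in Lemma~\ref{lem2} is automatic when $k=2$. Moreover, $\phi_2(n) = n+1$ for every integer $n$.

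For part (i), apply Lemma~\ref{lem2}(i) with $k=2$: $h = t^r \theta^i(a_2) t^{-s} \in H$ if and only if
\[
s \ = \ r - \phi_2(i-r) \ = \ r - \bigl((i-r) + 1\bigr) \ = \ 2r - i - 1,
\]
which is exactly the asserted condition.

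For part (ii), apply Lemma~\ref{lem2}(ii) with $k=2$: $h = t^r \theta^i({a_2}^{-1}) t^{-s} \in H$ if and only if
\[
r \ = \ s - \phi_2(i-s) \ = \ s - \bigl((i-s) + 1\bigr) \ = \ 2s - i - 1.
\]
Solving for $s$ yields $s = \tfrac{1}{2}(r + i + 1)$, as claimed. There is no real obstacle: the content of the lemma is already contained in Lemma~\ref{lem2}, and the only step is substituting the closed form $\phi_2(n) = n+1$ and rearranging.
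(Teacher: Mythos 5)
Your proposal is correct and matches the paper's own proof, which likewise cites Lemma~\ref{lem2} together with the identity $\phi_2(n)=n+1$ from Lemma~\ref{S_i}. The only difference is that you spell out the substitution and rearrangement explicitly, which the paper leaves to the reader.
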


\begin{proof}
  This follows immediately from Lemma~\ref{lem2} and the fact, given in Lemma~\ref{S_i}, that $\phi_2(n) = n+1$.
\end{proof}

The following result concerns passing a power of $t$ through a sequence of terms of the form $\theta^i({a_2}^{\pm1})$.  The statement is made neater by the use of the following formula, which is a consequence of Lemma~\ref{expanding theta} :  \begin{align*}
  \theta^a({a_3}^{-1}) \theta^b(a_3) \ =  \  \begin{cases}
    \theta^a(a_2) \ldots \theta^{b-1}(a_2) \quad &a<b, \\
    1 \quad &a=b, \\
    \theta^{a-1}({a_2}^{-1}) \ldots \theta^b({a_2}^{-1}) \quad &a>b. \\
  \end{cases}
\end{align*}

\begin{lemma} \label{sequence of theta(a_2) terms}
  Let $\sigma = t^r \theta^a({a_3}^{-1}) \theta^b(a_3)$ and $s = 2^{b-a} (r-a-2) + b + 2$ for some integers $r, a, b$.  Then $\sigma \in \Lambda$ if and only if $s$ is an integer.  Furthermore, in this case, $\sigma \in Ht^s$.
\end{lemma}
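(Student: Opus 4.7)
The plan is to reduce the statement to iteratively passing the power of $t$ through successive factors of the form $\theta^i({a_2}^{\pm1})$, applying Lemma~\ref{lem7} at each stage.

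The first step is to use the identity displayed just before the lemma to rewrite $\theta^a({a_3}^{-1})\theta^b(a_3)$ as a product of terms $\theta^i(a_2)$ when $a<b$, the empty word when $a=b$, or terms $\theta^i({a_2}^{-1})$ when $a>b$. Using $\theta^j(a_2) = a_2{a_1}^j$ and $\theta^j({a_2}^{-1}) = {a_1}^{-j}{a_2}^{-1}$ (valid for all $j \in \Z$ by Lemma~\ref{expanding theta}), one checks that each such factor is a piece of rank~$2$ and that the concatenation is freely reduced. Hence this factorization is exactly the partition of $\theta^a({a_3}^{-1})\theta^b(a_3)$ into pieces of rank~$2$ in the sense of Section~\ref{Upper bound preliminaries1}.

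Now suppose $\sigma \in Ht^{s'}$ for some integer $s'$. Lemma~\ref{Key Lemma} yields integers $r = r_0, r_1, \ldots, r_N = s'$, where $N = |b-a|$, such that $t^{r_{k-1}} \cdot (k\text{-th piece}) \in Ht^{r_k}$ for each $k$. Lemma~\ref{lem7} then converts this into an explicit first-order linear recurrence: when $a<b$ the $k$-th piece is $\theta^{a+k-1}(a_2)$, so $r_k = 2r_{k-1} - (a+k)$, while when $a>b$ the $k$-th piece is $\theta^{a-k}({a_2}^{-1})$, so $r_k = \tfrac12(r_{k-1} + a - k + 1)$. The case $a=b$ is immediate since then $\sigma = t^r$ and $s=r$. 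Solving via the usual homogeneous/particular decomposition gives the closed form $r_k = 2^{\pm k}(r-a-2) + a \pm k + 2$, with the sign matching that of $b-a$; in particular, $r_N = 2^{b-a}(r-a-2) + b + 2 = s$, forcing $s' = s$ and hence $s \in \Z$.

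Conversely, if $s$ is an integer then I would reverse the argument. When $a \le b$ the recurrence automatically preserves integrality, so Lemma~\ref{lem7}(i) applies at each step and directly realises $\sigma$ as a product in $Ht^s$. When $a > b$, the integrality of $s = 2^{-(a-b)}(r-a-2) + b + 2$ is equivalent to $2^{a-b}$ dividing $r-a-2$, and hence to the integrality of every intermediate $r_k$ for $k \le a-b$; this is exactly what is needed for Lemma~\ref{lem7}(ii) to apply at each step, again placing $\sigma$ in $Ht^s$. The step requiring the most care is the verification that the factorization supplied by Lemma~\ref{expanding theta} really is the rank-$2$ partition---i.e.\ the free-reducedness and piece-structure bookkeeping at each junction---after which the rest is a routine linear recurrence calculation.
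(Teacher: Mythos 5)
Your proof is correct and takes essentially the same route as the paper: identify the $\theta^i({a_2}^{\pm1})$ factorisation from Lemma~\ref{expanding theta} with the partition into pieces of rank $2$, apply Lemma~\ref{Key Lemma}, and solve the resulting linear recurrence coming from Lemma~\ref{lem7}. The one minor deviation is in the case $a>b$: you run the recurrence $r_k=\tfrac12(r_{k-1}+a-k+1)$ directly via Lemma~\ref{lem7}(ii), noting that integrality of $s$ propagates to every intermediate $r_k$, whereas the paper instead inverts $\sigma$ to reduce to the already-solved $a<b$ case; both are valid and of comparable length.
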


\begin{proof}
   We split the proof into two claims.  The first claim is that if $\sigma \in H t^{s'}$ for some integer $s'$, then $s = s'$.  In particular, this implies that if $\sigma \in \Lambda$, then $s$ is an integer.  If $a = b$, then clearly $s' = r = s$.  If $a < b$, then $\theta^a({a_3}^{-1}) \theta^b(a_3) = \theta^a(a_2) \ldots \theta^{b-1}(a_2)$.  By the  Lemma~\ref{Key Lemma}, there exist integers $r = r_0, r_1, \ldots, r_{b-a} = s'$ such that $t^{r_i} \theta^{a+i}(a_2) \in H t^{r_{i+1}}$.  By Lemma~\ref{lem7}, $r_{i+1} = 2 r_i - a - i - 1$, which solves to give $r_i = 2^i(r-a-2) + i + a + 2$.  Substituting $i=b-a$ gives $s' = s$.  On the other hand, suppose that $a > b$.  Note that $t^r \theta^a({a_3}^{-1}) \theta^b(a_3) \in H t^{s'}$ implies that $t^{s'} \theta^b({a_3}^{-1}) \theta^a(a_3) \in H t^r$.  Since $b < a$, we can substitute into the above solution to obtain $r = 2^{a-b}(s'-b-2) + a + 2$, which rearranges to give $s'= s$. This completes the proof of our first claim.

  The second claim is that if $s$ is an integer, then $\sigma \in Ht^s$.  If $a= b$, then this clearly holds.   Suppose that $a < b$.  Then $\sigma = t^r \theta^a(a_2) \ldots \theta^{b-1}(a_2)$, so certainly $\sigma \in \Lambda$ since all the letters ${a_2}^{\pm1}$ that appear are positive.  Therefore $\sigma\in H t^s$ by the first claim.  Now suppose that $a > b$.  Since $s$ is an integer, we can define $\tau = t^s \theta^b({a_3}^{-1}) \theta^a(a_3) = t^s \theta^b(a_2) \ldots \theta^{a-1}(a_2)$.  Then certainly $\tau \in \Lambda$ --- say $\tau \in H t^{r'}$.  By the first claim, $r' = 2^{a-b}(s-b-2) + a +2 = r$.  Therefore $t^s \theta^b({a_3}^{-1}) \theta^a(a_3) \in t^r$, whence $t^r \theta^a({a_3}^{-1}) \theta^b(a_3) \in Ht^s$, and the second claim is proved.
\end{proof}

\section{\texorpdfstring{An upper bound on the distortion of $H_k$ in $G_k$}{An upper bound on the distortion of Hk in Gk}} \label{DistortionUpper}

Next we turn to estimates associated with pushing a power of $t$ from left to right through a word  $w=w(a_1, \ldots, a_k)$ or through a piece of $w$, so as to leave a word on $a_1t, \ldots, a_kt$ times a power of $t$.  We will need to keep track of both the length of that word on the $a_1t, \ldots, a_kt$ and the power of $t$ that emerges to its  right.    Accordingly, let us define four families of functions, $\psi_{k ,l}(n)$, $\Psi_{k,l, p}(n)$, $\kappa_{k,l}(n)$, $K_{k,l, p}(n)$ for integers $k \geq 1$ and $l, p,n \geq 0$.

\begin{itemize}
\item  $\psi_{k, l} (n)$ is the least integer $N$ such that if $h  \in H$  is represented by a  word $t^r \pi t^{-s}$ with $\pi$ a piece of rank $k$,  with $\ell(\pi) \leq l$, and with $\abs{r} \leq n$, then $d_H(1,h) \leq N$.

\item  \rule{0mm}{6mm}$\Psi_{k, l, p} (n)$ is the least integer $N$ such that if $h  \in H$  is represented by a  word $t^r w t^{-s}$ with $w=w(a_1, \ldots, a_k)$ a word of at most $p$ pieces, with $\ell(w) \leq l$, and with $\abs{r} \leq n$, then $d_H(1,h) \leq N$.

\item  \rule{0mm}{6mm}$\kappa_{k, l} (n)$ is the least integer $N$ such that if $\pi$ is a piece of rank $k$ with  $\ell(\pi) \leq l$   and $r$ is an integer with $\abs{r} \leq n$ and $t^r \pi \in \Lambda$, then  $ t^r \pi \in Ht^s$ for some $s$ with $\abs{s} \leq N$.

\item  \rule{0mm}{6mm}$K_{k, l, p} (n)$ is the least integer $N$ such that if $w$ is a  word of rank at most $k$ with at most $p$ pieces and with $\ell(w) \leq l$  and $r$ is an integer with $\abs{r} \leq n$ and $t^r w \in \Lambda$, then  $ t^r w \in Ht^s$ for some $s$ with $\abs{s} \leq N$.
\end{itemize}

We will frequently make use, without further comment, of the fact that each of these functions is increasing in $k$, $l$, $p$ and $n$.  

The main technical result of this section is the following proposition.  In the corollary that follows it we explain how the upper bound it gives on $\Psi_{k,l, p}(n)$ leads to our desired bound $\Dist^{G_k}_{H_k} \preceq A_k$.

\begin{prop} \label{upper bounds}
  For all $k \geq 1$, there exist integers $C_k \geq 1$ such that for all $l,p,n \geq 0$, 
  \begin{align*}
    \kappa_{k,l}(n) \ &\leq \ A_{k-1}(C_k n + C_k l), \\
    K_{k,l, p}(n) \ &\leq \ {A_{k-1}}^{(p)}(C_k n + C_k l), \\
    \rule{0mm}{5mm} \psi_{k,l}(n) \ &\leq \ A_{k-1}(C_kn + C_k l), \label{Bound on psi} \\
    \Psi_{k,l,p}(n) \ &\leq \ {A_{k-1}}^{(3p)}(C_kn + C_k l).
  \end{align*}
\end{prop}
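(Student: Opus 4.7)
The plan is to prove all four bounds simultaneously by induction on $k$, choosing the constant $C_k$ at the end to be large enough to absorb all accumulated overheads. The base cases $k = 1$ and $k = 2$ I would handle directly: Lemma~\ref{S_i} gives $\phi_1 \equiv 1$ and $\phi_2(n) = n+1$, and Lemma~\ref{lem7} controls the single-piece analysis. Consequently, passing a power of $t$ through any word of rank at most $2$ yields bounds that are affine in $|r|$ and word length, easily dominated by $A_0(m) = m+2$ and $A_1(m) = 2m$.

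For the inductive step $k \geq 3$, assume the four bounds hold at level $k-1$. I would split the level-$k$ argument into two stages. First, starting from single-piece bounds $\kappa_{k,l}(n),\, \psi_{k,l}(n) \leq A_{k-1}(C_k(n+l))$, derive the multi-piece bounds via Lemma~\ref{Key Lemma}: given $w = w_1 \cdots w_p$ with $\sum_i \ell(w_i) \leq l$ and $t^r w \in H t^s$, the lemma yields integers $r = r_0, \ldots, r_p = s$ satisfying $|r_i| \leq \kappa_{k, \ell(w_i)}(|r_{i-1}|)$ and $d_H(1, t^{r_{i-1}} w_i t^{-r_i}) \leq \psi_{k, \ell(w_i)}(|r_{i-1}|)$. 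The inequality $c \cdot A_{k-1}^{(j)}(X) \leq A_{k-1}^{(j)}(cX)$ (which follows by induction on $j$ from \eqref{A_k1}), together with \eqref{A_k9}, shows by induction on $i$ that $|r_i| \leq A_{k-1}^{(i)}(C_k(n + \ell(w_1) + \cdots + \ell(w_i)))$, giving $K_{k,l,p}(n) \leq A_{k-1}^{(p)}(C_k(n+l))$. Summing the distance contributions produces at most $p \cdot A_{k-1}^{(p)}(C_k(n+l))$, and \eqref{A_k6} absorbs the factor $p$ as $p$ extra iterations, yielding $\Psi_{k,l,p}(n) \leq A_{k-1}^{(2p)}(C_k'(n+l)) \leq A_{k-1}^{(3p)}(C_k(n+l))$ after enlarging the constant.

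Second, reduce the single-piece bounds at level $k$ to the multi-piece bounds at level $k-1$. For a piece $\pi = a_k^{\epsilon_1} \pi' a_k^{-\epsilon_2}$ of rank $k$ with $\pi' = \pi'(a_1, \ldots, a_{k-1})$ and $\ell(\pi) \leq l$, and $h = t^r \pi t^{-s} \in H$: if $\epsilon_1 = \epsilon_2 = 0$, then $\pi$ is already of rank at most $k-1$ with at most $l$ pieces of rank $k-1$, so $K_{k-1, l, l}$ and $\Psi_{k-1, l, l}$ apply at once. Otherwise, I would extract the boundary $H$-generators $(a_k t)^{\pm 1}$; for instance when $\epsilon_1 = 1$, writing $h = (a_k t) h'$ gives $h' = t^{r-1} \bigl(\theta^r(a_k^{-1}) a_k\bigr) \pi' a_k^{-\epsilon_2} t^{-s}$, and Lemma~\ref{expanding theta} shows that $\theta^r(a_k^{-1}) a_k$ is a word of rank at most $k-1$ and length at most $|r|$. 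Handling the right-hand boundary symmetrically leaves an interior word of rank $\leq k-1$, length $O(n+l)$, and hence with at most $O(n+l)$ pieces of rank $k-1$. The inductive hypothesis then bounds the exit power by $A_{k-2}^{(O(n+l))}(C_{k-1} \cdot O(n+l))$, and the identity $A_{k-2}^{(m)}(X) \leq A_{k-1}(X+m)$ (obtained by iterating \eqref{A_k4}) collapses this to $A_{k-1}(C_k(n+l))$. The same reasoning applied to distances, using the $3 \cdot O(n+l)$-iteration $\Psi_{k-1}$ bound, gives the $\psi_{k,l}(n)$ estimate.

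The main technical difficulty is the first stage: ensuring that the constant inside the argument of $A_{k-1}^{(p)}(\cdot)$ remains the fixed $C_k$ and does not grow with $p$. This is precisely why the sharp inequality $c \cdot A_{k-1}^{(j)}(X) \leq A_{k-1}^{(j)}(cX)$ is essential, rather than the weaker $c \cdot A_{k-1}^{(j)}(X) \leq A_{k-1}^{(j+c)}(X)$ one obtains from \eqref{A_k6} alone; otherwise, the multiplicative overhead per iteration would accumulate exponentially in $p$, destroying the bound. The sign case analysis in the second stage (for positive vs.\ negative $r$, $s$ and the various combinations of $\epsilon_1, \epsilon_2$) is routine but lengthy, with each case producing a residue word whose length and piece count are controlled uniformly via Lemma~\ref{expanding theta}.
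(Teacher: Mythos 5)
Your overall architecture matches the paper's: a simultaneous induction on $k$ with one stage passing from single-piece functions $\kappa, \psi$ to the multi-piece functions $K, \Psi$ via Lemma~\ref{Key Lemma}, and a second stage reducing a rank-$k$ piece to a rank-$(k-1)$ problem by extracting the boundary generators $(a_kt)^{\pm 1}$ (this is the paper's Proposition~\ref{exponent and length bounds}, supported by Proposition~\ref{Bound on kappa and psi}). Your first stage is essentially the paper's derivation of \eqref{K_k} and \eqref{Psik}. But the second stage contains a gap that breaks the induction.

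You assert that $\theta^r(a_k^{-1})a_k$ ``is a word of rank at most $k-1$ and length at most $|r|$'', and conclude that the residual interior word handed to $K_{k-1,\ast,\ast}$ and $\Psi_{k-1,\ast,\ast}$ has length $O(n+l)$. This is false. For $r>0$, Lemma~\ref{expanding theta} gives $\theta^r(a_k^{-1})a_k = \theta^{r-1}(a_{k-1}^{-1})\cdots\theta^0(a_{k-1}^{-1})$, of length $\sum_{i=0}^{r-1}\ell(\theta^i(a_{k-1}))$; already for $k=3$ this equals $\sum_{i=0}^{r-1}(i+1) = r(r+1)/2$, and in general it grows like $r^{k-1}/(k-1)!$. (What is linear in $r$ is the number of occurrences of $a_{k-1}^{\pm1}$, i.e.\ the piece count, not the length.) So the length parameter $l'$ you feed into the level-$(k-1)$ bounds is polynomial in $n$ of degree $k-1$, and the collapse $A_{k-2}^{(m)}(X) \leq A_{k-1}(X+m)$ then yields $A_{k-1}$ of a quantity with a $C\,n^{k-1}$ term, not $A_{k-1}(C_k n + C_k l)$. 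Since $n^{k-1}$ is not $\leq C(n+l)$, the constant cannot be chosen independently of $n$ and the inductive bound does not close.

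The paper avoids this by refusing to pass the raw concatenation $\tilde w = u_1 w u_2^{-1}$ into the rank-$(k-1)$ machinery. Lemmas~\ref{Cancelling at either end} and \ref{ell(v) geq ell(u)}, combined with the fact that $t^{r'}\theta^{r-1}(a_{k-1}^{-1})\notin\Lambda$ (so Lemma~\ref{Key Lemma} forces cancellation), show that upon free reduction the entire prefix $u_1$ and suffix $u_2^{-1}$ cancel into $w$, so the reduced word $w'$ satisfies $\ell(w')\leq\ell(w)\leq l$. Proving this cancellation is the real content of the inductive step; your proposal asserts the conclusion (``controlled uniformly via Lemma~\ref{expanding theta}'') without supplying the argument, and the length estimate you state in its place is incorrect.
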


\begin{cor}  \label{the upper bound at last}
  For all $k \geq 1$, the distortion function of $H_k$ in $G_k$ satisfies \begin{equation*}
    Dist_{H_k}^{G_k} \preceq A_k.
  \end{equation*}
\end{cor}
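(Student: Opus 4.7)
The plan is to invoke Proposition~\ref{upper bounds} after putting an arbitrary element of $H_k$ into a suitable normal form. The case $k=1$ is handled separately: since $\theta$ acts trivially on $a_1$, $G_1 \cong \Z^2$ and $H_1 \cong \Z$ embeds undistortedly, so $\Dist^{G_1}_{H_1}$ is linear. Assume henceforth $k \geq 2$. Take $h \in H_k$ with $d_{G_k}(1,h) \leq n$ and choose a representing word $u$ of length at most $n$ on $\{a_1, \ldots, a_k, t\}^{\pm 1}$. Using the identity $a_i t^e = t^e \theta^e(a_i)$, I shuffle every $t^{\pm 1}$ in $u$ to the far left, producing
\[
h \ = \ t^r \, \theta^{m_1}({a_{i_1}}^{\epsilon_1}) \cdots \theta^{m_q}({a_{i_q}}^{\epsilon_q}) \ = \ t^r w,
\]
where $|r| \leq n$, where $q \leq n$ is the number of $a$-letters in $u$, and where each $|m_j| \leq n$ (each $m_j$ being a partial sum of $t$-exponents from $u$). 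After free reduction, still write the resulting word on ${a_1}^{\pm1},\ldots,{a_k}^{\pm1}$ as $w$. Since $h \in H_k \subseteq H$, we have $t^r w \in H$, and taking $s=0$ puts $h = t^r w t^{-s}$ in the form considered by the function $\Psi$. Thus Proposition~\ref{upper bounds} gives
\[
d_H(1,h) \ \leq \ \Psi_{k,\ell(w),p(w)}(n) \ \leq \ {A_{k-1}}^{(3\,p(w))}\!\bigl( C_k n + C_k \,\ell(w) \bigr).
\]

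I next bound $\ell(w)$ and $p(w)$ in terms of $n$. Lemma~\ref{expanding theta}, together with an easy induction on $i$, gives a polynomial bound $\ell(\theta^{m}({a_i}^{\pm1})) \leq C|m|^{i-1} + C$ for a constant $C = C(k)$. Summing over the $q \leq n$ factors,
\[
\ell(w) \ \leq \ q \cdot C n^{k-1} \ \leq \ C n^k.
\]
The same lemma shows each factor $\theta^{m_j}({a_{i_j}}^{\epsilon_j})$ contains at most one letter ${a_k}^{\pm 1}$ (and zero if $i_j < k$). Free reduction cannot introduce new occurrences of ${a_k}^{\pm1}$, so the reduced $w$ contains at most $n$ such letters, and hence $p(w) \leq n + 1$ by the definition of piece.

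Substituting these estimates,
\[
d_H(1, h) \ \leq \ {A_{k-1}}^{(3n+3)}\!\bigl(C_k n + C_k C n^k\bigr).
\]
For $n$ sufficiently large, $C_k n + C_k C n^k \leq A_k(n)$, because by \eqref{A_k7} we have $A_k(n) \geq A_2(n) = 2^n$, which eventually dominates any polynomial in $n$. Using the defining recurrence $A_{k-1}(A_k(m)) = A_k(m+1)$ from \eqref{A_k4} iterated $3n+3$ times yields
\[
{A_{k-1}}^{(3n+3)}\!\bigl(C_k n + C_k C n^k\bigr) \ \leq \ {A_{k-1}}^{(3n+3)}\bigl(A_k(n)\bigr) \ = \ A_k(4n+3).
\]
Finally, since the free basis $a_1t,\ldots,a_kt$ of $H_k$ is a subset of the free basis $a_1t,a_2t,\ldots$ of $H$ (Propositions~\ref{freeness of H_k} and~\ref{freeness of H}), the unique reduced word on $\{a_1t,a_2t,\ldots\}^{\pm1}$ representing an element of $H_k$ uses only the first $k$ generators, so $d_{H_k}(1,h) = d_H(1,h) \leq A_k(4n+3)$. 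Absorbing small values of $n$ into the $\simeq$-constants gives $\Dist^{G_k}_{H_k} \preceq A_k$.

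The main obstacle is that $\ell(w)$ is only bounded polynomially, not linearly, in $n$. A priori this polynomial blow-up could destroy the $\preceq A_k$ target once fed into an Ackermann-growth estimate. The key saving observation is that $n^k \leq A_k(n)$ for all sufficiently large $n$, so the polynomial input is absorbed into a single additional application of $A_{k-1}$ via the recursion $A_{k-1}(A_k(m)) = A_k(m+1)$, leaving a final bound of the form $A_k(O(n))$. Tracking the constants and verifying the small-$n$ cases is routine given Lemma~\ref{Properties of A_k}.
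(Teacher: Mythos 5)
Your proof is correct and follows essentially the same route as the paper's: handle $k=1$ separately, shuffle the $t^{\pm1}$ in a short representative of $h$ to the left to obtain $t^r w$ with $|r|\leq n$, bound $\ell(w)\leq Cn^k$ and the piece count by $n+1$ (via counting ${a_k}^{\pm1}$-occurrences, which $\theta$ cannot increase), feed these into Proposition~\ref{upper bounds}, and absorb the polynomial blow-up in $\ell(w)$ into a single extra application of $A_{k-1}$ using $n^k\preceq A_k(n)$ and the recursion~\eqref{A_k4}. The only cosmetic differences are that you derive the $\ell(w)\leq Cn^k$ estimate directly from Lemma~\ref{expanding theta} rather than citing~\cite{Bridson2}, and you arrange the final Ackermann arithmetic slightly differently, arriving at $A_k(4n+3)$ rather than the paper's $A_k((C_kC+C_k+3)n+3)$; both suffice.
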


\begin{proof}[Proof of Corollary~\ref{the upper bound at last}.]
  Since $G_1 \cong \Z^2$ and $H_1 \cong \Z$, $H_1$ is undistorted in $G_1$ and $Dist_{H_1}^{G_1} \preceq A_1$.  Now suppose that $k \geq 2$ and that $u=u(a_1, \ldots, a_k, t)$ is a word of length at most $n$ representing an element of $H$.  By carrying each $t^{\pm 1}$ to the front, we see that $u$ is equal in $G_k$ to $t^r w$ for some integer $r$ and some freely reduced word $w = w(a_1, \ldots a_k)$.  These satisfy $\abs{r} \leq n$ and $\ell(w) \leq C n^k$ for some integer $C>0$ depending only on $k$ --- see, for example, Section~3.3 of \cite{Bridson2}.

  We first show that the number of pieces of $w$ is at most $n+1$.  Indeed, the process of carrying each $t^{\pm1}$ to the front of $u$ has the effect of applying $\theta^{\pm1}$ to each $a_i$ it passes.  The form of the automorphism $\theta$ ensures that no new ${a_k}^{\pm1}$ are created by this process.  The number of occurrences of ${a_k}^{\pm1}$ in $w$, which we denote by $\ell_k(w)$, is therefore at most $n$.  Let $w = w_1 \ldots w_p$ be the partition of $w$ into pieces.  Say $w_i = {a_k}^{\epsilon_i^-} \pi_i {a_k}^{-\epsilon_i^+}$ where $\epsilon_i^-, \epsilon_i^+ \in \{ 0, 1\}$ and $\pi_i = \pi_i(a_1, \ldots, a_{k-1})$.  Observe that, for each $i$, precisely one of $\epsilon_i^+$ and $\epsilon_{i+1}^-$ is equal to $1$.   Indeed, if $\epsilon_i^+=\epsilon_{i+1}^-=0$, then the pieces $w_i$ and $w_{i+1}$ could be concatenated to form a single piece, contradicting the minimality of $p$, and if $\epsilon_i^+=\epsilon_{i+1}^-=1$, then $w$ would not be freely reduced.  So $$\ell_k(w) \ =  \ \sum_{i=1}^p (\epsilon_i^- + \epsilon_i^+) \ = \ \epsilon_1^- + \sum_{i=1}^{p-1} (\epsilon_i^+ + \epsilon_{i+1}^-) + \epsilon_n^+ \ = \  \epsilon_1^- + p-1+ \epsilon_n^+,$$ whence $p \leq \ell_k(w) +1 \leq n+1$.

  Now, $$d_H(1,u) \ = \  d_H(1, t^rw) \  \leq \  \Psi_{k, \ell(w), p}(|r|) \ \leq \ \Psi_{k, Cn^k, n+1}(n),$$ which is at most $${A_{k-1}}^{(3n+3)}(C_k C n^k + C_k n)$$ by Proposition~\ref{upper bounds}.  
  Choose an integer $N$ large enough that $n^k \leq 2^n$ for $n \geq N$.  
  Then, for $n \geq \max \{N, 1\}$, \begin{align*}
    d_H(1,u) \ &\leq \ {A_{k-1}}^{(3n+3)}(C_kCA_2(n) + C_k n) &&\text{by \eqref{A_k5}} \\
    &\leq \ {A_{k-1}}^{(3n+3)}(C_kCA_k(n) + C_k n) &&\text{by \eqref{A_k7}, \eqref{A_k5}} \\
    &\leq \ {A_{k-1}}^{(3n+3)}(A_k(C_kCn) + C_k n) &&\text{by \eqref{A_k5},  \eqref{A_k1}} \\
    &\leq \ {A_{k-1}}^{(3n+3)}(A_k((C_kC + C_k)n)) &&\text{by \eqref{A_k5}, \eqref{A_k9}} \\
    &= \ A_k((C_kC + C_k + 3)n + 3) &&\text{by \eqref{A_k4}}.
  \end{align*}
\end{proof}

Proposition~\ref{upper bounds} will follow from the relationships between $\psi_{k ,l}(n)$, $\Psi_{k,l,p}(n)$, $\kappa_{k,l}(n)$ and $K_{k,l,p}(n)$ set out in the next proposition.  Of its claims, \eqref{kappa_k} and \eqref{psik} are the most challenging to establish; we postpone their proof to Proposition~\ref{Bound on kappa and psi}, which itself will draw  on  Lemmas~\ref{Overcoming a_k}, \ref{Cancelling at either end} and \ref{ell(v) geq ell(u)}.  

\begin{prop}\label{exponent and length bounds}
For integers $k \geq 1$  and  $l,p,n \geq 0$,
\begin{align}
 \kappa_{1,l}(n) \ &\leq \ n+1, \label{kappa1} \\
 K_{k,l,p}(n) \ &\leq \ \max_{\substack{q \leq p \\ l_1 + \ldots + l_q \leq l}} \left\{\kappa_{k,l_1}( \ldots \kappa_{k, l_{q-1}}(\kappa_{k, l_q}(n)) \ldots ) \right\}, \label{K_k}  \\
 \kappa_{k+1,l}(n) \ &\leq \ 2K_{k,l,l}(2\phi_{k+1}(n)), \label{kappa_k}  \\
 \rule{0mm}{5mm}\psi_{1,l}(n) \ &\leq \ 1, \label{psi1} \\
 \Psi_{k,l,p} (n) \ &\leq \ p\psi_{k,l}(K_{k,l,p}(n)), \label{Psik}    \\
 \psi_{k+1,l}(n) \ &\leq  \ 3K_{k,l,l}(2\phi_{k+1}(n)) + \Psi_{k,l,l}(2\phi_{k+1}(n)). \label{psik}
 \end{align}
\end{prop}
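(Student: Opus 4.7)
The inequalities fall into two tiers. The four bounds \eqref{kappa1}, \eqref{psi1}, \eqref{K_k} and \eqref{Psik} admit short direct proofs, whereas \eqref{kappa_k} and \eqref{psik} are substantially more delicate; my plan is to dispose of the first four here and defer the harder two to the subsequent Proposition~\ref{Bound on kappa and psi}.

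For \eqref{kappa1} and \eqref{psi1}, a piece of rank $1$ must be one of $\varepsilon$, $a_1$, $a_1^{-1}$ because the alphabet underlying the ``interior'' of a rank-$1$ piece is empty. Since $a_1$ commutes with $t$ in $G$, we have $t^r \in H t^r$, $t^r a_1 = (a_1 t) t^{r-1} \in H t^{r-1}$, and $t^r a_1^{-1} = (a_1 t)^{-1} t^{r+1} \in H t^{r+1}$; this gives the claimed bounds $|s| \leq n+1$ and $d_H(1, h) \leq 1$ immediately.

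For \eqref{K_k}, I will apply Lemma~\ref{Key Lemma} to the partition $w = w_1 \cdots w_q$ of $w$ into $q \leq p$ pieces of rank $k$, producing intermediate exponents $r = r_0, r_1, \ldots, r_q = s$ with $t^{r_{i-1}} w_i \in H t^{r_i}$. The definition of $\kappa_{k, \ell(w_i)}$ then gives $|r_i| \leq \kappa_{k, \ell(w_i)}(|r_{i-1}|)$; composing from $i=1$ to $q$ yields the nested expression, and taking the maximum over valid partitions produces the claimed supremum. For \eqref{Psik} the same decomposition writes $t^r w t^{-s}$ as the product $h_1 \cdots h_q$ with $h_i := t^{r_{i-1}} w_i t^{-r_i} \in H$. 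Applying \eqref{K_k} to the prefix $w_1 \cdots w_{i-1}$ gives $|r_{i-1}| \leq K_{k, l, p}(n)$ for every $i$, and hence the definition of $\psi_{k, l}$ bounds $d_H(1, h_i) \leq \psi_{k, l}(K_{k, l, p}(n))$; summing the $q \leq p$ contributions gives \eqref{Psik}.

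For the deferred bounds \eqref{kappa_k} and \eqref{psik}, the plan is as follows. Given a piece $\pi = a_{k+1}^{\epsilon_1} \pi' a_{k+1}^{-\epsilon_2}$ of rank $k+1$ and an exponent $r$ with $t^r \pi \in H t^s$, I would look for intermediate exponents $r_1, s_1$ producing a telescoping factorisation
\[
  t^r \pi t^{-s} \ = \ \bigl(t^r a_{k+1}^{\epsilon_1} t^{-r_1}\bigr) \bigl(t^{r_1} \pi' t^{-s_1}\bigr) \bigl(t^{s_1} a_{k+1}^{-\epsilon_2} t^{-s}\bigr)
\]
with each parenthesised factor in $H$. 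Lemma~\ref{lem2} characterises when the two outer factors lie in $H$ via $\phi_{k+1}$, and in the favourable sign regime ($-r, -s \in S_{k+1}$) the resulting formulas give $|r_1| \leq |r| + \phi_{k+1}(|r|) \leq 2\phi_{k+1}(n)$ by inequality~\eqref{phi geq n} and a symmetric bound at the other end. The middle factor is a conjugate of $\pi'$, whose rank is at most $k$ and length at most $l$, so the definition of $K_{k, l, l}$ controls $|s_1|$ by $K_{k, l, l}(2\phi_{k+1}(n))$ and the definition of $\Psi_{k, l, l}$ controls its $d_H$-length by $\Psi_{k, l, l}(2\phi_{k+1}(n))$; the two outer factors each contribute at most $O(K_{k, l, l}(2\phi_{k+1}(n)))$ to $d_H$, producing the right-hand sides of \eqref{kappa_k} and \eqref{psik}.

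The hard part will be the wrong-sign regime. For $k+1 \geq 3$ the domain $S_{k+1}$ equals $\N$, so the equations $r_1 = r - \phi_{k+1}(-r)$ and $s_1 = s - \phi_{k+1}(-s)$ force $r, s \leq 0$; handling $r > 0$ or $s > 0$ requires substitute identities, and this is where the supporting Lemmas~\ref{Overcoming a_k}, \ref{Cancelling at either end} and \ref{ell(v) geq ell(u)} will enter. I expect them to furnish dual decompositions in which a wrong-signed $a_{k+1}^{\pm 1}$ is absorbed into a power of $(a_{k+1}t)^{\pm 1}$ at controlled cost; the constant factors $2$ and $3$ in the stated bounds are there to accommodate the additional overhead from combining these dual decompositions with the favourable-sign case.
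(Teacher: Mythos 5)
Your treatment of \eqref{kappa1}, \eqref{psi1}, \eqref{K_k} and \eqref{Psik} is correct and matches the paper's proof exactly: the rank-$1$ pieces are identified as $\varepsilon, a_1, a_1^{-1}$ and handled directly, and Lemma~\ref{Key Lemma} is applied to the partition of $w$ into pieces to telescope the exponents $r_i$ and sum the $d_H$-contributions, just as the paper does. Deferring \eqref{kappa_k} and \eqref{psik} to Proposition~\ref{Bound on kappa and psi} is also precisely the paper's strategy, and your sketch of that proposition's structure (factor off the extremal $a_{k+1}^{\pm1}$ via Lemma~\ref{lem2} in the good-sign case, and use Lemmas~\ref{Overcoming a_k}, \ref{Cancelling at either end}, \ref{ell(v) geq ell(u)} to absorb the wrong-sign case) is a fair outline of what the paper does; the one place your sketch under-describes is that in the wrong-sign case the middle word is not $\pi'$ but $u_1 \pi' u_2^{-1}$, with $u_1, u_2$ potentially very long, and the entire point of Lemmas~\ref{Cancelling at either end} and \ref{ell(v) geq ell(u)} is to show that after free reduction this collapses back to length at most $\ell(\pi')$, not merely that it has ``controlled cost.''
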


\begin{proof}
 We first establish~\eqref{kappa1} and \eqref{psi1}.  Consideration of the empty word gives that $\kappa_{k,0}(n) = n$ and $\psi_{k,0}=0$.  Now suppose that $l \geq 1$ and note that the only pieces of rank $1$ are ${a_1}^{\pm1}$.  If $h = t^r {a_1}^{\pm1} t^{-s}$ lies in $H$, then $d_H(1,h) = 1$ and $r-s = \pm1$, whence $|s| \leq |r| + 1$.  Thus $\kappa_{k,l}(n) \leq n+1$ and $\psi_{k,l}(n) = 1$.

For \eqref{K_k} and \eqref{Psik}, let $h = t^r w t^{-s}$ where $w=w(a_1, \ldots, a_k)$ is a word of length at most $l$ with at most $p$ pieces and $|r| \leq n$.  Let $w = w_1 \ldots w_q$ be the partition of $w$ into pieces, where $q \leq p$.  If $h \in H$, then Lemma~\ref{Key Lemma} implies that there exist integers $r=r_0, r_1, \ldots, r_q = s$ and elements $h_1, \ldots, h_q$ in $H$ such that $t^{r_{i-1}} w_i = h_i t^{r_i}$.  Thus $|r_i| \leq \kappa_{k, \ell(w_i)}(|r_{i-1}|)$, whence $$|s|  \ \leq  \ \kappa_{k, \ell(w_q)}( \ldots (\kappa_{k, \ell(w_1)}(|r|)) \ldots ) \  \leq \  \kappa_{k, \ell(w_q)}( \ldots (\kappa_{k, \ell(w_1)}(n)) \ldots )$$ and we obtain inequality~(\ref{K_k}).  For inequality~(\ref{Psik}), note that $|r_i| \leq K_{k, \ell(w_1 \ldots w_i), i}(|r|) \leq K_{k,l,p}(n)$, whence $$d_H(1,h) \ \leq \  \sum_{i=1}^{q} d_H(1, h_i) \ \leq  \ \sum_{i=1}^{q} \psi_{k, \ell(w_i)}(|r_{i-1}|) \ \leq \ p \psi_{k,l}(K_{k,l,p}(n)).$$

Finally, \eqref{kappa_k} and \eqref{psik} will follow from Proposition~\ref{Bound on kappa and psi}.
\end{proof}

We now derive Proposition~\ref{upper bounds} from Proposition~\ref{exponent and length bounds}.   We first  use \eqref{kappa1}, \eqref{K_k} and \eqref{kappa_k} to obtain bounds on $\kappa_{k,l}(n)$ and $K_{k,l,p}(n)$ in terms of Ackermann's functions.   We then derive bounds on $\psi_{k ,l}(n)$ and $\Psi_{k,l,p}(n)$  from (\ref{psi1}), (\ref{Psik}) and (\ref{psik}), having fed in our bounds on $\kappa_{k,l}(n)$ and $K_{k,l,p}(n)$.

\begin{proof}[Proof of Proposition~\ref{upper bounds}]
   We will need the inequality, established in Lemma~\ref{phi_k vs Ackermann}, that for $n \geq 0$ and $k \geq 2$, \begin{equation}\phi_k(n) \leq A_{k-1}(n+k). \label{phik}\end{equation}

We first prove that there exist integers $D_k \geq 1$ such that \begin{align}
    \kappa_{k,l}(n) \ &\leq \ A_{k-1}(D_k n + D_k l), \label{Bound on kappa}\\
    K_{k,l, p}(n) \ &\leq \ {A_{k-1}}^{(p)}(D_k n + D_k l). \label{Bound on K}
   \end{align}  Inequalities~\eqref{kappa1} and \eqref{K_k} together imply that $K_{1,l,p}(n) \leq n + p$.  Thus \eqref{Bound on kappa} and \eqref{Bound on K} hold in the case $k=1$ with $D_1 = 1$.  Now suppose that $k \geq 2$ and that \eqref{Bound on kappa} and \eqref{Bound on K} hold for smaller values of $k$.  If $l=0$, then, using \eqref{A_k2}, we calculate that $\kappa_{k,l}(n) = n \leq A_{k-1}(n)$.  If $l \geq 1$, then \begin{align*}
    \kappa_{k,l}(n) \ &\leq \ 2K_{k-1,l,l}(2 \phi_k(n)) &&\text{by \eqref{kappa_k}}  \\
    &\leq \ 2K_{k-1,l,l}(2A_{k-1}(n+k)) &&\text{by \eqref{phik}}\\
    &\leq \ 2{A_{k-2}}^{(l)}(2D_{k-1}A_{k-1}(n+k) + D_{k-1}l) &&\\
    &\leq \ 2{A_{k-2}}^{(l)}(A_{k-1}(2D_{k-1}n + D_{k-1}l + 2D_{k-1}k)) &&\text{by \eqref{A_k5}, \eqref{A_k1}, \eqref{A_k9}}\\
    &= \ 2A_{k-1}(2D_{k-1}n + (D_{k-1}+1)l + 2D_{k-1}k) &&\text{by \eqref{A_k4}}\\
    &\leq \ A_{k-1}(4D_{k-1}n + 2(D_{k-1}+1)l + 4D_{k-1}k) &&\text{by \eqref{A_k1}} \\
    &\leq \ A_{k-1}(4 D_{k-1}n + [2(D_{k-1}+1) + 4 D_{k-1}k]l) &&\text{by \eqref{A_k5}}.
  \end{align*}  Taking $D_k = \max \{ 2(D_{k-1}+1) + 4D_{k-1}k,1 \}$, we obtain \eqref{Bound on kappa}.

  For \eqref{Bound on K} we calculate that \begin{align*}
    K_{k,l,p}(n) \ &\leq \ \max_{\substack{q \leq p \\ l_1 + \ldots + l_q \leq l}} \left\{\kappa_{k,l_1}( \ldots \kappa_{k,l_{q-1}}(\kappa_{k, l_q}(n)) \ldots ) \right\} &&\text{by \eqref{K_k}}\\
    &\leq \ \max_{\substack{q \leq p \\ l_1 + \ldots + l_q \leq l}} \left\{A_{k-1}( \ldots A_{k-1}(A_{k-1}(D_kn + D_k l_q) + D_k l_{q-1}) \ldots )\right\} &&\text{by \eqref{A_k5}}\\
    &\leq \ \max_{\substack{q \leq p \\ l_1 + \ldots + l_q \leq l}} \left\{ {A_{k-1}}^{(q)}  \left( D_kn + D_k \sum_{i=1}^q l_i \right)   \right\} &&\text{by \eqref{A_k5}, \eqref{A_k9}}\\
    &\leq \ \max_{\substack{q \leq p}} \left\{ {A_{k-1}}^{(q)}(D_kn + D_kl) \right\} &&\text{by \eqref{A_k5}}\\
    &\leq \ {A_{k-1}}^{(p)}(D_kn + D_kl) &&\text{by \eqref{A_k2}}.
  \end{align*}

  Next, we combine \eqref{psi1}, \eqref{Psik} and \eqref{psik} with \eqref{Bound on kappa} and \eqref{Bound on K} to deduce that there exist integers $E_k, F_k \geq 1$ such that \begin{align}
    \psi_{k,l}(n) \ &\leq \ A_{k-1}(E_kn + E_k l), \label{Bound on psi} \\
    \Psi_{k,l,p}(n) \ &\leq \ {A_{k-1}}^{(3p)}(F_kn + F_k l). \label{Bound on Psi}
  \end{align}  It follows from \eqref{psi1} and \eqref{Psik} that $\Psi_{1,l,p}(n) \leq p$.  Thus \eqref{Bound on psi} and \eqref{Bound on Psi} hold in the case $k=1$ with $E_k = F_k = 1$.  Now suppose that $k \geq 2$ and that \eqref{Bound on psi} and \eqref{Bound on Psi} hold for smaller values of $k$.  If $l=0$, then $\psi_{k,l}(n) = 0 \leq A_{k-1}(0)$.  If $l \geq 1$, then \begin{align*}
    \psi_{k,l}(n) \ &\leq \ 3 K_{k-1,l,l}(2\phi_k(n)) + \Psi_{k-1,l,l}(2\phi_k(n)) &&\text{by \eqref{psik}} \\
    &\leq \ 3 K_{k-1,l,l}(2A_{k-1}(n+k)) + \Psi_{k-1,l,l}(2A_{k-1}(n+k)) &&\text{by \eqref{phik}} \\
    &\leq \ 3 {A_{k-2}}^{(l)} (2D_{k-1} A_{k-1}(n+k) + D_{k-1}l)  \\ &\mspace{50mu} + {A_{k-2}}^{(3l)} (2 F_{k-1} A_{k-1}(n+k) + F_{k-1}l) &&\text{by \eqref{Bound on K}} \\
    &\leq \ 3 {A_{k-2}}^{(l)} (A_{k-1}(2D_{k-1} (n+k) + D_{k-1}l))  \\ &\mspace{50mu} + {A_{k-2}}^{(3l)} (A_{k-1}(2 F_{k-1} (n+k) + F_{k-1}l)) &&\text{by \eqref{A_k5}, \eqref{A_k1}, \eqref{A_k9}} \\
    &= \ 3 A_{k-1}(2D_{k-1} (n+k) + (D_{k-1}+1)l)  \\ &\mspace{50mu} + A_{k-1}(2 F_{k-1} (n+k) + (F_{k-1}+3)l) &&\text{by \eqref{A_k4}} \\
    &\leq \ A_{k-1}(6D_{k-1} (n+k) + 3(D_{k-1}+1)l)  \\ &\mspace{50mu} + A_{k-1}(2 F_{k-1} (n+k) + (F_{k-1}+3)l) &&\text{by \eqref{A_k1}} \\
    &\leq \ A_{k-1} (2(3D_{k-1}+F_{k-1})(n+k) + (3D_{k-1}+F_{k-1}+4)l) &&\text{by \eqref{A_k3}} \\
    &\leq \ A_{k-1} (2(3D_{k-1}+F_{k-1})n + (3(2k+1)D_{k-1}+(2k+1)F_{k-1}+4)l). &&
  \end{align*}  
 Taking $E_k = 3(2k+1)D_{k-1}+(2k+1)F_{k-1}+4$, we obtain \eqref{Bound on psi}.

 If $p=0$ or $l=0$, then, using \eqref{A_k2}, we calculate that $\Psi_{k,l,p}(n) = 0 \leq {A_{k-1}}^{(3p)}(0)$.  If $l, p \geq 1$, then \begin{align*}
    \Psi_{k,l,p}(n) \ &\leq \ p\psi_{k,l}(K_{k,l,p}(n)) &&\text{by \eqref{Psik}} \\
    &\leq \ p \psi_{k,l}({A_{k-1}}^{(p)} (D_k n + D_k l)) &&\text{by \eqref{Bound on K}} \\
    &\leq \ p A_{k-1}(E_k {A_{k-1}}^{(p)} (D_k n + D_k l) + E_kl) && \\
    &\leq \ p {A_{k-1}}^{(p+1)} (D_k E_k n + (D_k + 1)E_k l) &&\text{by \eqref{A_k5}, \eqref{A_k2}, \eqref{A_k1}, \eqref{A_k9}} \\
    &\leq \ {A_{k-1}}^{(2p+1)} (D_k E_k n + (D_k + 1)E_k l) &&\text{by \eqref{A_k6}}, \\
    &\leq \ {A_{k-1}}^{(3p)} (D_k E_k n + (D_k + 1)E_k l) &&\text{by \eqref{A_k2}.}
  \end{align*}  Taking $F_k = (D_k + 1)E_k$, we obtain \eqref{Bound on Psi}.

  Finally, the proof is completed by taking $C_k = \max\{D_k, E_k, F_k\}$ and applying \eqref{A_k5}.
\end{proof}

The remainder of this section is devoted to establishing \eqref{kappa_k} and \eqref{psik}.  This is done in Proposition~\ref{Bound on kappa and psi}, which draws on  Lemmas~\ref{Overcoming a_k}, \ref{Cancelling at either end} and \ref{ell(v) geq ell(u)} that follow.   We now outline our strategy.

Suppose that $t^{r} {a_k}^{\epsilon_1} w {a_k}^{-\epsilon_1} t^{-s}$, where $r,s \in \Z$, $\epsilon_1, \epsilon_2 \in \{0,1\}$ and $w=w(a_1, \ldots, a_{k-1})$, represents an element $h \in H$.  Our approach will be to find elements $h_1, h_2 \in H$, integers $r', s'$ and a word $w'=w'(a_1, \ldots, a_{k-1})$ such that $h$ is represented by $h_1 t^{r'} w' t^{-s'} h_2$.  The functions $K_{k-1,\ast,\ast}$ and $\Psi_{k-1, \ast, \ast}$ will then control the behaviour of the subword $t^{r'} w' t^{-s'}$.  Together with estimates for $d_H(1, h_i)$, $|r'|$, $|s'|$ and $\ell(w')$, this will allow us to derive bounds on $|s|$ and $d_H(1, h)$.

As indicated by Lemma~\ref{S_i}, the case $k=2$ is exceptional and so will be treated separately.  For $k \geq 3$, the $h_1$, $h_2$ $r'$, $s'$ and $w$ will be produced by Lemma~\ref{Overcoming a_k}.  This lemma takes integers $k$, $n$ and $\epsilon$, with $k \geq 3$ and $\epsilon \in \{0,1\}$, and gives an integer $n'$, an element $h \in H$ and a word $u = u(a_1, \ldots, a_{k-1})$ such that $t^n a_k = h t^{n'} u$ in $G$.  Applying Lemma~\ref{Overcoming a_k} to $k$, $r$ and $\epsilon_1$ will produce $r'$, $h_1$ and a word $u_1$.  Applying Lemma~\ref{Overcoming a_k} to $k$, $s$ and $\epsilon_2$ will produce $s'$, ${h_2}^{-1}$ and a word $u_2$.  The word $w'$ will then be defined to be the free reduction of  $\tilde{w} := u_1 w {u_2}^{-1}$.

The relationship between the input and output of Lemma~\ref{Overcoming a_k}  is determined by which of the following holds:
\begin{enumerate}
\item $\epsilon =0$, 
\item $\epsilon = 1$ and $n \leq 0$, or
\item  $\epsilon = 1$ and $n > 0$.  
\end{enumerate} \emph{A priori}, this would lead to us having to consider nine distinct cases, depending on the values of $\epsilon_1$ and $\epsilon_2$ and the signs of $r$ and $s$.  To streamline the process, Lemma~\ref{Overcoming a_k} packages (i) and (ii) together: it considers the cases that either $n\epsilon \leq 0$ or $n\epsilon>0$.  As such, we need now only consider four cases, depending on the signs of $r\epsilon_1$ and $s\epsilon_2$.

The form of  $\tilde{w}$ will depend on which of (i), (ii) or (iii) applies to $r$ and $\epsilon_1$ and to $s$ and $\epsilon_2$.  Lemmas~\ref{Cancelling at either end} and \ref{ell(v) geq ell(u)} will be brought to bear to ensure that enough cancellation occurs to obtain a sufficiently strong bound on $\ell(w')$.

\begin{prop} \label{Bound on kappa and psi}
  Let $h = t^r {a_k}^{\epsilon_1} w {a_k}^{-\epsilon_2} t^{-s}$ where $\epsilon_1, \epsilon_2 \in \{0, 1\}$, $w = w(a_1, \ldots, a_{k-1})$ and $k \geq 2$.  Let $n$ and $l$ be integers with $|r| \leq n$ and $\ell(w) \leq l$.  If $h \in H$, then \begin{align*}
    |s| \ &\leq \ 2 K_{k-1, l, l}(2 \phi_k(n)), \\
    d_H(1,h) \ &\leq \ 3 K_{k-1,l,l}(2\phi_k(n)) + \Psi_{k-1, l, l}(2 \phi_k(n)).
  \end{align*}
\end{prop}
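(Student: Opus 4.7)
The proof splits at $k=2$ versus $k \geq 3$, since Lemma~\ref{S_i} shows $S_k = \Z$ only for $k \leq 2$. For $k = 2$, the formula $\phi_2(n) = n + 1$ together with Lemma~\ref{lem7} allows one to handle the four sub-cases indexed by $(\epsilon_1,\epsilon_2) \in \{0,1\}^2$ by direct computation: Lemma~\ref{lem7} pins down a linear relation between $s$, $r$ and data read off from $w$, after which $h$ can be written explicitly in terms of $a_1 t$ and $a_2 t$.  The promised bounds are then comfortably absorbed because $K_{1,l,l}$ and $\Psi_{1,l,l}$ grow at least linearly in their exponent argument.

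For $k \geq 3$, the plan is to apply Lemma~\ref{Overcoming a_k} at both ends of $h$: to $(k, r, \epsilon_1)$, producing $h_1 \in H$, an integer $r'$, and a word $u_1 = u_1(a_1, \ldots, a_{k-1})$ with $t^r a_k^{\epsilon_1} = h_1 t^{r'} u_1$; and to $(k, s, \epsilon_2)$, producing $h_2 \in H$, an integer $s'$, and $u_2$ with $t^s a_k^{\epsilon_2} = h_2 t^{s'} u_2$, which, inverted, gives $a_k^{-\epsilon_2} t^{-s} = u_2^{-1} t^{-s'} h_2^{-1}$. Letting $w'$ be the free reduction of $u_1 w u_2^{-1}$, we obtain the factorisation
$$h \ = \ h_1 \bigl( t^{r'} w' t^{-s'} \bigr) h_2^{-1}$$
in $G$, and the middle factor lies in $H$ because $h$, $h_1$, $h_2$ do. The estimates supplied by Lemma~\ref{Overcoming a_k} on the exponents $r'$, $s'$ and on the $H$-lengths of $h_1$, $h_2$ will be controlled by $\phi_k(|r|)$ and $\phi_k(|s|)$, which in turn are bounded by $\phi_k(n)$; inequality~\eqref{phi geq n} absorbs terms of the form $|r| + \phi_k(|r|)$ into $2\phi_k(n)$, accounting for the factor of $2$ in the argument of $K_{k-1,l,l}$ and $\Psi_{k-1,l,l}$.

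The critical length estimate is $\ell(w') \leq l$: this is not automatic, since in case~(iii) of Lemma~\ref{Overcoming a_k} (that is, $\epsilon = 1$ and $n\epsilon > 0$) the word $u_i$ has length controlled by $|r|$ or $|s|$ rather than by $l$. Lemmas~\ref{Cancelling at either end} and \ref{ell(v) geq ell(u)} enter here: they show that when case~(iii) fires at an end, the word $u_1$ or $u_2^{-1}$ cancels into the boundary of $w$ so completely that the reduced product has length no larger than $\ell(w)$. Organising the argument via a four-way sign sub-case split on $r\epsilon_1$ and $s\epsilon_2$ (so that case~(iii) is isolated at each end) is the cleanest way to apply these cancellation lemmas. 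Once $\ell(w') \leq l$ is established, $K_{k-1,l,l}$ yields $|s'| \leq K_{k-1,l,l}(2\phi_k(n))$ and $\Psi_{k-1,l,l}$ yields $d_H(1, t^{r'} w' t^{-s'}) \leq \Psi_{k-1,l,l}(2\phi_k(n))$. Since $|s|$ differs from $|s'|$ only by the $t$-exponent contribution of $h_2$, which is itself bounded by $K_{k-1,l,l}(2\phi_k(n))$, and $d_H(1, h_i) \leq K_{k-1,l,l}(2\phi_k(n))$ by the same mechanism, summing the triangle inequality over $h_1$, $t^{r'} w' t^{-s'}$, $h_2^{-1}$ delivers the two claimed inequalities.

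The principal obstacle is the length control $\ell(w') \leq l$. A naive bound gives $\ell(u_1) + \ell(w) + \ell(u_2)$, which in case~(iii) is far larger than $l$; the entire point of Lemmas~\ref{Cancelling at either end} and \ref{ell(v) geq ell(u)} is to force the excess to cancel into $w$, and correctly matching these cancellations to the four sign sub-cases is where the bookkeeping is most delicate. Once the length bound is in hand, the remainder of the argument is a routine assembly of $K_{k-1,l,l}$- and $\Psi_{k-1,l,l}$-estimates.
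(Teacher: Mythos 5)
Your overall architecture matches the paper's: apply Lemma~\ref{Overcoming a_k} at both ends to produce $h_1, h_2 \in H$, exponents $r', s'$ and boundary words $u_1, u_2$; set $w'$ to the free reduction of $u_1 w u_2^{-1}$; use $K_{k-1,l,l}$ and $\Psi_{k-1,l,l}$ on the middle factor $t^{r'}w't^{-s'}$; and handle $k = 2$ separately with the explicit identity $t^p a_2 = (a_2 t)(a_1 t)^{-p}t^{2p-1}$. You also correctly identify $\ell(w') \leq l$ as the crux and propose the four-way split on the signs of $r\epsilon_1$ and $s\epsilon_2$.

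However, there is a genuine gap in your account of how $\ell(w') \leq l$ is obtained. You describe Lemmas~\ref{Cancelling at either end} and \ref{ell(v) geq ell(u)} as forcing $u_1$ and $u_2^{-1}$ to "cancel into the boundary of $w$ so completely that the reduced product has length no larger than $\ell(w)$." That is an accurate description only of case~(i) of Lemma~\ref{Cancelling at either end}. But Lemma~\ref{Cancelling at either end} has a second possibility: in case~(ii), $w$ itself is completely absorbed and the reduced word $w'$ freely equals $\theta^{r-1}({a_{k-1}}^{-1})\,\theta^{s-1}(a_{k-1})$, which is \emph{not} a subword of $w$ and has length that can be far larger than $l$ whenever $r \neq s$. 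One must prove $r = s$ to conclude that $w'$ is empty. The paper's argument for this is the most delicate step of the whole proof and itself requires a three-way split on $k = 3$, $k = 4$, and $k > 4$: the $k=3$ case reduces to $t^{r-s}{a_1}^{s-r} \in H$, the $k=4$ case calls on Lemma~\ref{sequence of theta(a_2) terms}, and the $k > 4$ case pushes the problem down two ranks and uses Lemma~\ref{lem3} and Lemma~\ref{Key Lemma} to rule out both $r<s$ and $r>s$. None of this appears in your sketch, so as written the proposal does not establish the length bound in the hardest sub-case.

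Two further, more minor inaccuracies. First, in the mixed-sign sub-cases ($r\epsilon_1 > 0, s\epsilon_2 \leq 0$ and its mirror) the mechanism that forces each ${a_{k-1}}^{\pm1}$ from $u_1$ or $u_2^{-1}$ to cancel into $w$ is Lemma~\ref{Key Lemma} together with the conclusion $t^{n'}\theta^{n-1}({a_{k-1}}^{-1}) \notin \Lambda$ from Lemma~\ref{Overcoming a_k}(ii); Lemma~\ref{Cancelling at either end} is only invoked when both ends fire. Second, your claim that $\phi_k(|s|)$ is bounded by $\phi_k(n)$ is not available, since $|s|$ is not a priori bounded by $n$; the correct route, which your later sentences in fact use, is to bound $|s'|$ via $K_{k-1,l,l}(|r'|)$ after $\ell(w') \leq l$ and $|r'| \leq 2\phi_k(n)$ are in hand, and then to recover $|s| \leq |s'| + 1$ from Lemma~\ref{Overcoming a_k}.
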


\begin{proof}
  We claim that there exist $h_1, h_2 \in H$, $r', s' \in \Z$ and $w' = w'(a_1, \ldots, a_{k-1})$ such that $h = h_1 t^{r'} w' t^{-s'} h_2$ in $G$ and \begin{align}
    |r'| \ &\leq \ 2\phi_k(n), \label{Bound on r'}\\
    |s| \ &\leq \ |s'| + 1, \label{Bound on s}\\
    d_H(1, h_1) \ &\leq \ |r'| + 1, \label{Bound on h_1}\\
    d_H(1, h_2) \ &\leq \ |s'| + 1, \label{Bound on h_2} \\
    \ell(w') \ &\leq \ l. \label{Bound on w'}
  \end{align}

  The result follows from the claim by direct calculation.  Indeed, since the number of pieces of a word is bounded by its length, \begin{align}
    |s'| \ &\leq \ K_{k-1, \ell(w'), \ell(w')}(|r'|), \label{Bound on s'} \\
    d_H(1, t^{r'} w' t^{-s'}) \ &\leq \ \Psi_{k-1, \ell(w'), \ell(w')}(|r'|). \label{Bound on h'}
  \end{align}  We will also need the inequality \begin{equation}
  K_{k,l,p}(n) \ \geq \ n,  \label{K geq n}
\end{equation}  which follows immediately from consideration of the empty word.  We can now calculate that \begin{align*}
    |s| \ &\leq \ |s'| + 1 &&\text{by \eqref{Bound on s}} \\
    &\leq \ K_{k-1, \ell(w'), \ell(w')}(|r'|) + 1 &&\text{by \eqref{Bound on s'}} \\
    &\leq \ K_{k-1, l, l}(2\phi_k(n)) + 1 &&\text{by \eqref{Bound on r'}, \eqref{Bound on w'}} \\
    &\leq \ 2K_{k-1, l, l}(2\phi_k(n)) &&\text{by \eqref{phi geq 1}, \eqref{K geq n}}\\
    \rule{0mm}{5mm}d_H(1, h) \ &\leq \ d_H(1, h_1) + d_H(1, t^{r'} w' t^{-s'}) + d_H(1, h_2) &&\\
    &\leq \ |r'| + 1 + \Psi_{k-1, \ell(w'), \ell(w')}(|r'|) + |s'| + 1 &&\text{by \eqref{Bound on h_1}, \eqref{Bound on h_2}, \eqref{Bound on h'}} \\
    &\leq \ 2\phi_k(n) + 1 + \Psi_{k-1, l, l}(2\phi_k(n)) + K_{k-1, \ell(w'), \ell(w')}(|r'|) + 1 &&\text{by \eqref{Bound on r'}, \eqref{Bound on w'}, \eqref{Bound on s'}} \\
    &\leq \ 4\phi_k(n) + \Psi_{k-1, l, l}(2\phi_k(n)) + K_{k-1, l, l}(2\phi_k(n)) &&\text{by \eqref{phi geq 1}, \eqref{Bound on r'}, \eqref{Bound on w'}} \\
    &\leq \ 3 K_{k-1, l, l}(2\phi_k(n)) + \Psi_{k-1, l, l}(2\phi_k(n)) &&\text{by \eqref{K geq n}.}
  \end{align*}

  We first prove the claim for $k=2$.  Since $t^p a_2 = (a_2 t) (a_1 t)^{-p} t^{2p-1}$, we can take $w'$ to be $w$ and define $h_1$, $h_2$, $r'$ and $s'$ by \begin{alignat*}{4}
    h_1 &= \left\{\begin{aligned}
      &1 \\
      &(a_2 t) (a_1 t)^{-r} \\
    \end{aligned}\right.&
    \hspace{6pt}&\begin{aligned}
      &\epsilon_1 = 0, \\
      &\epsilon_1 = 1, \\
    \end{aligned}&
    \hspace{25pt} r' &= \left\{\begin{aligned}
      &r \\
      &2r-1 \\
    \end{aligned}\right.&
    \hspace{6pt}&\begin{aligned}
      &\epsilon_1 = 0, \\
      &\epsilon_1 = 1, \\
    \end{aligned} \\[8pt]
     h_2 &= \left\{\begin{aligned}
      &1 \\
      &(a_1 t)^{-s} (a_2 t)^{-1} \\
    \end{aligned}\right.&
    &\begin{aligned}
      &\epsilon_2 = 0, \\
      &\epsilon_2 = 1, \\
    \end{aligned}&
    \hspace{30pt} s' &= \left\{\begin{aligned}
      &s \\
      &2s-1 \\
    \end{aligned}\right.&
    &\begin{aligned}
      &\epsilon_2 = 0, \\
      &\epsilon_2 = 1. \\
    \end{aligned}
  \end{alignat*}  Inequalities~\eqref{Bound on s} and \eqref{Bound on w'} are immediate.  For \eqref{Bound on r'}, use the fact, from Lemma~\ref{S_i}, that $\phi_2(n) = n+1$.  Inequality~\eqref{Bound on h_1} is immediate if $\epsilon_1 = 0$.  If $\epsilon_1 = 1$, then $r = \frac{1}{2}(r'+1)$, whence $|r| \leq \frac{1}{2}(|r'| + 1)$.  But $r' \neq 0$, so $|r| \leq |r'|$ and $d_H(1, h_1) = |r|+1 \leq |r'| + 1$.  Inequality~\eqref{Bound on h_2} is derived similarly.

We now prove the claim for $k \geq 3$.  First apply Lemma~\ref{Overcoming a_k} to $k, r, \epsilon_1$ to produce $r'$, $h_1$ and a word $u_1$.  Then apply it to $k,s, \epsilon_2$ to produce $s'$, ${h_2}^{-1}$ and a word $u_2$.  Defining $\tilde{w} := u_1 w {u_2}^{-1}$, we have that $h$ is represented by $h_1 t^{r'} \tilde{w} t^{-s'} h_2$ and hence that $t^{r'} \tilde{w} t^{-s'} \in H$.  It is immediate from the bounds given in Lemma~\ref{Overcoming a_k} that \eqref{Bound on r'}--\eqref{Bound on h_2} hold.  Finally, we define $w'$ to be the free reduction of $\tilde{w}$.  To establish~\eqref{Bound on w'}, we consider four cases.

 \textit{Case: $r\epsilon_1 \leq 0$, $s\epsilon_2 \leq 0$}.  We have that $\tilde{w} = w$ and so it is immediate that $\ell(w') \leq \ell(w)$.

\textit{Case: $r\epsilon_1 > 0$, $s\epsilon_2 \leq 0$}.  We have that $\tilde{w} =  \theta^{r-1}({a_{k-1}}^{-1}) \ldots \theta^0({a_{k-1}}^{-1}) w$.  Since $t^{r'} \theta^{r-1}({a_{k-1}}^{-1})$ does not lie in $\Lambda$, applying Lemma~\ref{Key Lemma} to $t^{r'} \tilde{w} t^{-s'}$ shows that, when $\tilde{w}$ is freely reduced, each ${a_{k-1}}^{-1}$ in $\theta^{r-1}({a_{k-1}}^{-1}) \ldots \theta^0({a_{k-1}}^{-1})$ cancels into $w$.  It follows from Lemma~\ref{ell(v) geq ell(u)} that $\ell(w') \leq \ell(w)$. 

 \textit{Case: $r\epsilon_1 \leq 0$, $s\epsilon_2 > 0$}.  We have that $\tilde{w} = w \theta^0(a_{k-1}) \ldots \theta^{s-1}(a_{k-1})$.  Since $t^{s'} \theta^{s-1}({a_{k-1}}^{-1})$ does not lie in $\Lambda$, applying Lemma~\ref{Key Lemma} to $t^{s'} {\tilde{w}}^{-1} t^{-r'} \in H$ shows that, when $\tilde{w}$ is freely reduced, each $a_{k-1}$ in $\theta^0(a_{k-1}) \ldots \theta^{s-1}(a_{k-1})$ cancels into $w$.  It follows from Lemma~\ref{ell(v) geq ell(u)} that $\ell(w') \leq \ell(w)$. 

\textit{Case: $r\epsilon_1 > 0$, $s\epsilon_2 > 0$}.  We have that $\tilde{w} = \theta^{r-1}({a_{k-1}}^{-1}) \ldots \theta^0({a_{k-1}}^{-1}) w \theta^0(a_{k-1}) \ldots \theta^{s-1}(a_{k-1})$.  Since neither $t^{r'} \theta^{r-1}({a_{k-1}}^{-1})$ nor $t^{s'} \theta^{s-1}({a_{k-1}}^{-1})$ lies in $\Lambda$, we are in a position to apply Lemma~\ref{Cancelling at either end}.  If case~(i) of Lemma~\ref{Cancelling at either end} occurs, then, when $\tilde{w}$ is freely reduced, each ${a_{k-1}}^{-1}$ in $\theta^{r-1}({a_{k-1}}^{-1}) \ldots \theta^0({a_{k-1}}^{-1})$ and each $a_{k-1}$ in $\theta^0(a_{k-1}) \ldots \theta^{s-1}(a_{k-1})$ cancels into $w$. Applying Lemma~\ref{ell(v) geq ell(u)} gives that $\ell(w') \leq \ell(w)$.  On the other hand, suppose that case~(ii) of Lemma~\ref{Cancelling at either end} occurs, so $w'$ is the free reduction of $\theta^{r-1}({a_{k-1}}^{-1}) \theta^{s-1}(a_{k-1})$. We will show that $r = s$, whence $w'$ is the empty word and trivially $\ell(w') \leq l$.  If $k = 3$, then $t^{r'} w' t^{-s'} = t^{r-1} \theta^{r-1}({a_2}^{-1}) \theta^{s-1}(a_2) t^{1-s} = t^{r-s} {a_1}^{s-r}$ in $G$.  Since this element lies in $H$, $r-s = s-r$, whence $r=s$. If $k=4$, then $t^{r'} w' t^{-s'}$ is freely equal to $t^{r-1} \theta^{r-1}({a_3}^{-1}) \theta^{s-1}(a_3) t^{1-s}$.  Since this lies in $H$, applying Lemma~\ref{sequence of theta(a_2) terms} and solving the resulting equation gives $r=s$.  Finally, suppose that $k > 4$.  Lemma~\ref{expanding theta} gives that \begin{align*}
  t^{r'} w' t^{-s'} \ \FreeEq  \  \begin{cases}
    t^{r-1} \theta^{r-1}(a_{k-2}) \ldots \theta^{s-2}(a_{k-2}) t^{1-s} \quad &r<s, \\
    t^{r-s} \quad &r=s, \\
    t^{r-1} \theta^{r-2}({a_{k-2}}^{-1}) \ldots \theta^{s-1}({a_{k-2}}^{-1}) t^{1-s} \quad &r>s. \\
  \end{cases} \end{align*}  By Lemma~\ref{lem3}, neither $t^{r-1} \theta^{r-2}({a_{k-2}}^{-1})$ nor $t^{s-1} \theta^{s-2}({a_{k-2}}^{-1})$ lies in $\Lambda$, since $k-2 \geq 3$.  Thus, by Lemma~\ref{Key Lemma}, both $r < s$ and $s > r$ lead to a contradiction.  Hence $r=s$ as required.
\end{proof}

\begin{lemma} \label{Overcoming a_k}
  Given integers $k,n, \epsilon$, with $k \geq 3$ and $\epsilon \in \{0,1\}$, there exists an integer $n'$, an element $h \in H$ and a word $u = u(a_1, \ldots, a_{k-1})$ such that $t^n {a_k}^\epsilon = h t^{n'} u$ in $G$, \begin{align*}
    |n|-1 \ &\leq \ |n'| \ \leq \ 2 \phi_k(|n|)\text{,  \ and} \\
    d_H(1, h) \ &\leq \ \max\{|n'|, 1\}.
  \end{align*}  Furthermore, \begin{enumerate}
    \item if $n \epsilon \leq 0$, then $u$ is the empty word;

    \item if $n \epsilon > 0$, then $n' = n-1$, $u = \theta^{n-1}({a_{k-1}}^{-1}) \ldots \theta^0({a_{k-1}}^{-1})$ and $t^{n'} \theta^{n-1}({a_{k-1}}^{-1}) \notin \Lambda$.
  \end{enumerate}
\end{lemma}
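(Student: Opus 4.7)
The plan is to proceed by case analysis on $\epsilon \in \{0,1\}$ and the sign of $n$, writing down an explicit factorization $t^n {a_k}^\epsilon = h t^{n'} u$ in each case.  The three sub-cases of (i) (namely $\epsilon=0$; $\epsilon=1$ with $n=0$; and $\epsilon=1$ with $n<0$) are largely routine; the single sub-case $\epsilon=1$, $n>0$ constituting (ii) is where the creative choice lies.  The underlying identity throughout is $t^n a_k = \theta^{-n}(a_k) t^n$, combined with the explicit expansions of $\theta^{\pm n}(a_k)$ given by Lemma~\ref{expanding theta}.

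For $\epsilon=0$, take $u$ empty and $(h, n') = (1, n)$; the bounds then follow from $\phi_k(|n|) \geq |n|$, which is \eqref{phi geq n}.  For $\epsilon = 1$ and $n = 0$, take $u$ empty and $(h, n') = (a_k t, -1)$, using $a_k = (a_k t) t^{-1}$ and $\phi_k(0) = 1$.  For $\epsilon = 1$ and $n < 0$, again take $u$ empty and exploit $t^n a_k = \theta^{|n|}(a_k) t^n$.  Since $k \geq 3$, Lemma~\ref{S_i} gives $|n| \in S_k$, so Definition~\ref{def lambda} provides $h := \theta^{|n|}(a_k) t^{\phi_k(|n|)} \in H$, and we set $n' := n - \phi_k(|n|)$.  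The inequalities $|n| - 1 \leq |n'| \leq 2 \phi_k(|n|)$ then follow from \eqref{phi geq n}, and the bound $d_H(1, h) = \phi_k(|n|) \leq |n'|$ is an application of Lemma~\ref{Lemma1}.

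For the substantive case, $\epsilon = 1$ and $n > 0$, the assertion forces $n' = n - 1$ and $u = \theta^{n-1}({a_{k-1}}^{-1}) \cdots \theta^0({a_{k-1}}^{-1})$.  My proposal is that the correct choice is $h := a_k t$.  To verify $h t^{n'} u = t^n a_k$, collapse $(a_k t) t^{n-1} = a_k t^n$, and then carry the block $t^n$ rightward through $u$ using $t \, \theta^j({a_{k-1}}^{-1}) = \theta^{j-1}({a_{k-1}}^{-1}) \, t$.  After all $n$ shifts, the resulting expression is
\[
  a_k \cdot \theta^{-1}({a_{k-1}}^{-1}) \, \theta^{-2}({a_{k-1}}^{-1}) \cdots \theta^{-n}({a_{k-1}}^{-1}) \cdot t^n,
\]
which, by the $n < 0$ clause of Lemma~\ref{expanding theta}, equals $\theta^{-n}(a_k) t^n = t^n a_k$, as required.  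The numerical bounds here are immediate since $|n'| = n-1$ and $d_H(1, h) = 1$.

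It remains to confirm the non-membership $t^{n'} \theta^{n-1}({a_{k-1}}^{-1}) \notin \Lambda$ in case (ii).  For $k \geq 4$, Lemma~\ref{lem3} applied at index $k-1 \geq 3$ would force $n - 1 < n - 1$, a contradiction.  For $k = 3$, Lemma~\ref{lem7}(ii) would require $\tfrac12(2n - 1)$ to be an integer, which fails by parity.  The main obstacle, insofar as there is one, is identifying the right factorization in the $n > 0$ case: recognizing that $h = a_k t$ works and that the telescoping of $t^n$ through $u$ dovetails exactly with the Lemma~\ref{expanding theta} expansion of $\theta^{-n}(a_k)$.  All other verifications are routine applications of Lemma~\ref{lem1} and Lemma~\ref{Lemma1}.
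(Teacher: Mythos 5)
Your proof is correct and follows essentially the same route as the paper's: the same factorizations (in particular the choice $h = a_kt$, $n' = n-1$, $u = \theta^{n-1}(a_{k-1}^{-1})\cdots\theta^0(a_{k-1}^{-1})$ in the substantive case $\epsilon=1$, $n>0$), the same use of Lemmas~\ref{Lemma1}, \ref{lem3} and \ref{lem7}, and the same appeals to \eqref{phi geq 1} and \eqref{phi geq n}. The only cosmetic differences are that you split $n=0$ from $n<0$ (the paper treats $n\leq0$ uniformly, with its formula reducing to your $(a_kt,-1)$ at $n=0$) and that you verify the $n>0$ identity by pushing $t^n$ rightward through $u$ rather than deriving it forward from $t^n a_k$.
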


\begin{proof}
  We consider three cases.

\textit{Case: $\epsilon= 0$. } We trivially obtain an instance of conclusion (i) by taking $n' = n$, $h=1$ and $u$ to be the empty word. The upper bound on $|n'|$ follows from \eqref{phi geq 1} and \eqref{phi geq n}. 

\textit{Case: $\epsilon = 1$ and $n \leq 0$.} Following the calculation $$t^n a_k \ = \  \theta^{-n}(a_k) t^n \ = \  \theta^{-n}(a_k) t^{\phi_k(|n|)} t^{n-\phi_k(|n|)},$$ we obtain an instance of conclusion (i) by taking $n' = n-\phi_k(|n|)$, $h = \theta^{-n}(a_k) t^{\phi_k(|n|)}$ and $u$ to be the empty word.  It follows immediately from the definition of the function $\phi_k$ that $h \in H$ and from Lemma~\ref{Lemma1} that $d_H(1, h) = \phi_k(|n|)$.  By \eqref{phi geq 1}, $\phi_k(|n|)$ is positive whence $|n'| = |n| + \phi_k(|n|)$ and $d_h(1,h) \leq |n'|$.  Applying \eqref{phi geq 1} and \eqref{phi geq n} gives $|n| + 1 \leq |n'| \leq 2\phi_k(|n|)$. 

\textit{Case: $\epsilon = 1$ and $n > 0$.}  Following the calculation $$t^n a_k \ = \  a_k {a_k}^{-1} t^n a_k \  =  \   a_k t^n \theta^n({a_k}^{-1}) a_k \ = \  (a_k t) t^{n-1} \theta^{n-1}({a_{k-1}}^{-1}) \ldots \theta^0({a_{k-1}}^{-1})$$ we obtain an instance of conclusion (ii) by taking $n' = n-1$, $h = (a_k t)$ and $$u \ = \  \theta^{n-1}({a_{k-1}}^{-1}) \ldots \theta^0({a_{k-1}}^{-1}).$$  The upper bound on $|n'|$ follows from \eqref{phi geq 1} and \eqref{phi geq n}.  The fact that $t^{n'} \theta^{n-1}({a_{k-1}}^{-1})$ does not lie in $\Lambda$ follows from Lemmas~\ref{lem3} and \ref{lem7}. 
\end{proof}

\begin{lemma} \label{Cancelling at either end}
  Let $\sigma = t^r \theta^a({a_k}^{-1}) \ldots \theta^0({a_k}^{-1}) w \theta^0(a_k) \ldots \theta^b(a_k) t^{-s}$  where $w = w(a_1, \ldots, a_k)$ is freely reduced  and $a, b \geq 0$.  Suppose $\sigma$ represents an element of $H$ but $t^r \theta^a({a_k}^{-1}) \notin \Lambda$ and $t^s \theta^b({a_k}^{-1}) \notin \Lambda$.  Then either  \begin{enumerate}

    \item $w$ has a prefix $\theta^0({a_k}) \ldots \theta^{a-1}({a_k}) a_k$ and suffix ${a_k}^{-1} \theta^{b-1}({a_k}^{-1}) \ldots \theta^0({a_k}^{-1})$, or

    \item $w  = \theta^0(a_k) \ldots \theta^{a-1}(a_k) \theta^{b-1}({a_k}^{-1}) \ldots \theta^0({a_k}^{-1})$.
\end{enumerate}
\end{lemma}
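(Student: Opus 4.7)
I would pass to the freely reduced form $\tilde w$ of the word $LwR$, where I abbreviate $L := \theta^a({a_k}^{-1}) \cdots \theta^0({a_k}^{-1})$ and $R := \theta^0(a_k) \cdots \theta^b(a_k)$. Since $L$, $w$, and $R$ are each freely reduced, all cancellation in $LwR$ is confined to the two interfaces, though it can cascade when $w$ is entirely consumed. The hypothesis $\sigma \in H$ rephrases as $t^r \tilde w t^{-s} \in H$, and the main tool will be Lemma~\ref{Key Lemma} applied to the partition of $\tilde w$ into pieces of rank $k$.

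The central claim to establish first is that no ${a_k}^{-1}$ originating in $L$ can survive in $\tilde w$, and by a symmetric argument no $a_k$ from $R$ can either. Suppose for contradiction that some ${a_k}^{-1}$ from $L$ does survive. Since the cancellation at the $L$-$w$ interface eats a suffix of $L$ from right to left, the leftmost survivor would be the terminal ${a_k}^{-1}$ of $\theta^a({a_k}^{-1})$. By Lemma~\ref{expanding theta}, the text of $L$ lying to its left is exactly $\theta^{a-1}({a_{k-1}}^{-1}) \cdots \theta^0({a_{k-1}}^{-1})$, of rank $<k$, so the first piece of $\tilde w$ would equal $\theta^a({a_k}^{-1})$. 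Lemma~\ref{Key Lemma} would then produce some $r_1 \in \Z$ with $t^r \theta^a({a_k}^{-1}) \in H t^{r_1} \subseteq \Lambda$, contradicting the hypothesis. The mirror argument applied to the last piece of $\tilde w$, using $t^s \theta^b({a_k}^{-1}) \notin \Lambda$ after inversion, disposes of $R$.

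I would then split on the rank of $\tilde w$. If $\tilde w$ has rank $k$, some $a_k^{\pm 1}$ from $w$ survives in the interior of $\tilde w$, so the $L$-side and $R$-side cancellation zones sit in disjoint prefix and suffix of $w$. In particular, the $L$-side cancellation alone must absorb every ${a_k}^{-1}$ of $L$. Telescoping via Lemma~\ref{expanding theta} (using $\theta^i({a_k}^{-1}) \theta^i(a_k) = \varepsilon$ for each $i$), the product $L \cdot [\theta^0(a_k) \cdots \theta^{a-1}(a_k) \, a_k]$ freely reduces to $\theta^{a-1}({a_{k-1}}^{-1}) \cdots \theta^0({a_{k-1}}^{-1})$; this makes $\theta^0(a_k) \cdots \theta^{a-1}(a_k) \, a_k$ the shortest prefix of $w$ capable of realising the required cancellation, and any longer cancelling prefix contains it as an initial segment. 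So $w$ begins with $\theta^0(a_k) \cdots \theta^{a-1}(a_k) \, a_k$ and, by the mirror argument, ends with ${a_k}^{-1} \, \theta^{b-1}({a_k}^{-1}) \cdots \theta^0({a_k}^{-1})$. The prefix ends in $a_k$ and the suffix begins with ${a_k}^{-1}$, so they cannot overlap (a shared letter would need opposite signs) nor abut (that would violate the free reducedness of $w$); this yields conclusion (i).

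Finally, if $\tilde w$ has rank $<k$, every $a_k^{\pm 1}$ in $LwR$ is cancelled. Neither cancellation zone alone suffices, so the two must meet inside $w$ and consume all its $a_k^{\pm 1}$-letters, with the final surviving ${a_k}^{-1}$ of $L$ and $a_k$ of $R$ pairing through the cascaded interface (via the mirror identity $[\theta^{b-1}({a_k}^{-1}) \cdots \theta^0({a_k}^{-1})] \cdot R$ reduces to $\theta^b(a_k)$). Matching $w$'s $a_k$-letters to $L$'s ${a_k}^{-1}$-letters on the left and $w$'s ${a_k}^{-1}$-letters to $R$'s $a_k$-letters on the right, using the fact that the pairing induced by free reduction is non-crossing, pins $w$ down to $\theta^0(a_k) \cdots \theta^{a-1}(a_k) \, \theta^{b-1}({a_k}^{-1}) \cdots \theta^0({a_k}^{-1})$; this gives (ii). The uniqueness of $w$ in this second case is the main technical obstacle: one must rule out any surplus letters sneaking in between the two halves of $w$, which requires using both the free reducedness of $w$ and the non-crossing structure of the cancellation pairing to show the two halves must abut exactly.
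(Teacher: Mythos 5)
Your initial step --- establishing, via Lemma~\ref{Key Lemma} and the hypotheses that $t^r\theta^a({a_k}^{-1})$ and $t^s\theta^b({a_k}^{-1})$ lie outside $\Lambda$, that no ${a_k}^{-1}$ originating in $L$ and no $a_k$ originating in $R$ can survive in $\tilde w$ --- is correct and matches the paper's opening observation that $l_1$ and $l_2$ both cancel. Your rank-$k$ case is also sound: a surviving ${a_k}^{\pm 1}$ from $w$ separates the two cancellation regions into a disjoint prefix and suffix of $w$, each of which must absorb the whole block of ${a_k}^{\pm 1}$'s on its side, forcing the prefix $\theta^0(a_k)\cdots\theta^{a-1}(a_k)\,a_k$ and suffix ${a_k}^{-1}\,\theta^{b-1}({a_k}^{-1})\cdots\theta^0({a_k}^{-1})$ and hence conclusion (i).

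The gap is in your rank-$<k$ case, where you assert ``neither cancellation zone alone suffices, so the two must meet inside $w$.'' That inference fails: the two zones can be disjoint, with the $L$-side zone absorbing all of $L$'s ${a_k}^{-1}$'s into a prefix of $w$ and the $R$-side zone absorbing all of $R$'s $a_k$'s into a suffix of $w$, while every ${a_k}^{\pm 1}$ of $w$ still cancels --- giving $\tilde w$ of rank $<k$ but conclusion (i), not (ii). A concrete instance: take $k=3$, $a=b=0$, $r=s=1$, $w=a_3\,a_2\,{a_3}^{-1}$. Then $\sigma = t\,{a_3}^{-1}\,(a_3\,a_2\,{a_3}^{-1})\,a_3\,t^{-1}$ freely reduces to $t\,a_2\,t^{-1}=a_2{a_1}^{-1}=(a_2t)(a_1t)^{-1}\in H$, and $t\,{a_3}^{-1}\notin\Lambda$ by Lemma~\ref{lem2}(ii) together with $\phi_3\geq1$, so all hypotheses hold; yet $\tilde w=a_2$ has rank $2<k$ while $w$ has prefix $a_3$ and suffix ${a_3}^{-1}$ (case (i)), not the empty word that case (ii) would demand when $a=b=0$. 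Relatedly, the matching you describe --- all of $w$'s $a_k$'s to $L$'s ${a_k}^{-1}$'s and all of $w$'s ${a_k}^{-1}$'s to $R$'s $a_k$'s --- is precisely the matching of case (i), not (ii); in case (ii) the outermost pair is $l_1\leftrightarrow l_2$ and $w$ contains only $a$ copies of $a_k$ and $b$ of ${a_k}^{-1}$, not $a+1$ and $b+1$. The dichotomy the paper uses, and which works, is on the location of the letter $l'$ cancelling the leftmost ${a_k}^{-1}$ of $L$: if $l'\in w$ one lands in case (i); if $l'\in R$ then positivity of $R$ and the non-crossing property force $l'=l_2$, whence the whole segment between $l_1$ and $l_2$ is freely trivial, giving case (ii). This dichotomy is not determined by the rank of $\tilde w$.
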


\begin{proof}
  Write $l_1$ for the letter ${a_k}^{-1}$ of the term $\theta^a({a_k}^{-1})$ of $\sigma$ and write $l_2$ for the letter $a_k$ of the term $\theta^b(a_k)$ of $\sigma$.  Lemma~\ref{Key Lemma} implies that, when $\sigma$ is freely reduced, both $l_1$ and $l_2$ cancel.  Let $l'$ be the letter $a_k$ that cancels with $l_1$

  If $l'$ lies in $w$, then $l_2$ must cancel with a letter to the right of $l'$ in $w$, and we have case (i).

  On the other hand, suppose that $l'$ lies in the subword $\theta^0(a_k) \ldots \theta^b(a_k)$.  If $l'$ is distinct from $l_2$, then $l_2$ must cancel with an ${a_k}^{-1}$ lying to the right of $l'$.  But this is a contradiction, since all the occurrences of ${a_k}^{\pm1}$ in $\theta^0(a_k) \ldots \theta^b(a_k)$ are positive.  Thus $l' = l_2$.  Now $\theta^{a-1}({a_k}^{-1}) \ldots \theta^0({a_k}^{-1}) w \theta^0(a_k) \ldots \theta^{b-1}(a_k)$ must be freely trivial and we have case (ii).
\end{proof}

\begin{lemma} \label{ell(v) geq ell(u)}
  Let $w = \theta^0(a_k) \ldots \theta^r(a_k)$ where $r \geq 0$.  Let $l$ be the last $a_k$ appearing in $w$ and partition $w$ as $w = uv$ where $u$ is the prefix of $w$ ending with $l$ and $v$ is the suffix of $w$ coming after $l$.  Then $\ell(u) \geq \ell(v)$.
\end{lemma}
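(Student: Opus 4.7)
The plan is to use Lemma~\ref{expanding theta} to describe $u$ and $v$ explicitly, and then reduce the inequality to a termwise length comparison $\ell(\theta^i(a_k)) \geq \ell(\theta^i(a_{k-1}))$.

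First suppose $k \geq 2$.  Lemma~\ref{expanding theta} tells me that each factor $\theta^i(a_k)$ begins with $a_k$ and has no further occurrence of $a_k$ (all other letters have index less than $k$), so the $a_k$'s in $w$ are exactly the initial letters of the $r+1$ factors $\theta^0(a_k), \ldots, \theta^r(a_k)$, and the last $a_k$ is the initial letter of $\theta^r(a_k)$.  For $r=0$ this gives $v = \varepsilon$ and the inequality is trivial; for $r \geq 1$ it gives $u = \theta^0(a_k) \cdots \theta^{r-1}(a_k)\, a_k$ and $v = \theta^0(a_{k-1}) \cdots \theta^{r-1}(a_{k-1})$ (the tail of $\theta^r(a_k)$, read off from Lemma~\ref{expanding theta}).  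The exceptional case $k=1$ is immediate: $\theta$ fixes $a_1$, so $w = a_1^{r+1}$, $u = w$, and $v$ is empty.

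With these descriptions the desired inequality reads
$$1 + \sum_{i=0}^{r-1} \ell(\theta^i(a_k)) \ \geq \ \sum_{i=0}^{r-1} \ell(\theta^i(a_{k-1})),$$
so it suffices to establish the auxiliary fact that $\ell(\theta^i(a_k)) \geq \ell(\theta^i(a_{k-1}))$ for every $k \geq 2$ and $i \geq 0$.  I will prove this by induction on $k$.  The base case $k=2$ is direct from Lemma~\ref{expanding theta}: $\ell(\theta^i(a_2)) = i+1 \geq 1 = \ell(\theta^i(a_1))$.  For the inductive step, Lemma~\ref{expanding theta} (with $i \geq 1$) presents the two sides as $1 + \sum_{j=0}^{i-1} \ell(\theta^j(a_{k-1}))$ and $1 + \sum_{j=0}^{i-1} \ell(\theta^j(a_{k-2}))$, and the inductive hypothesis gives the comparison factor by factor.

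The only point requiring care is to resist the temptation to bound $\ell(v)$ by singling out its last factor $\theta^{r-1}(a_{k-1})$ and trying to absorb it into $u$: such a bound fails because $\ell(\theta^{r-1}(a_{k-1}))$ grows Ackermannianly in $r$.  Carrying out the comparison factor by factor, against the corresponding tails of $u$, is what makes the argument go through.
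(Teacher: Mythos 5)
Your proof is correct and follows essentially the same route as the paper's: identify $u = \theta^0(a_k) \cdots \theta^{r-1}(a_k)\,a_k$ and $v = \theta^0(a_{k-1}) \cdots \theta^{r-1}(a_{k-1})$ via Lemma~\ref{expanding theta}, then reduce to the termwise inequality $\ell(\theta^i(a_k)) \geq \ell(\theta^i(a_{k-1}))$ proved by induction on $k$. The only difference is one of exposition — you spell out the base case, the inductive step, and the trivial $k=1$ case, whereas the paper compresses this to ``an easy induction on $k$.''
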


\begin{proof}
  Note that $u = \theta^0(a_k) \ldots \theta^{r-1}(a_k) a_k$ and, by Lemma~\ref{expanding theta}, $v = \theta^0(a_{k-1}) \ldots \theta^{r-1}(a_{k-1})$.  It thus suffices to prove that $\ell(\theta^i(a_k)) \geq \ell(\theta^i(a_{k-1}))$ for $i \geq 0$. But this follows by an easy induction on $k$ from the structures of $\theta^i(a_k)$ and $\theta^i(a_{k-1})$ respectively given by Lemma~\ref{expanding theta}.
\end{proof}

\section{Groups with Ackermannian Dehn functions} \label{Big Dehn}

Recall that $\Gamma_k$ is the HNN extension of $G_k$ over $H_k$ in which the stable letter commutes with all elements of $H_k$:  $$\Gamma_k  \  := \ \langle  \ a_1, \ldots, a_k, t, p \  | \  t^{-1} {a_1} t = a_1, \ t^{-1} {a_i} t = a_i a_{i-1} \  (i > 1), \  [p, a_i t] =1 \  (i > 0) \  \rangle.$$

\begin{prop} \label{quadratic Dehn function}
The group $\Gamma_1$ has Dehn function $\simeq$--equivalent to $n \mapsto n^2$.
\end{prop}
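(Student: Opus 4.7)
The plan is to recognise $\Gamma_1$ as a familiar group and then invoke standard facts.

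First I would make the change of generators $x := a_1 t$ (equivalently $t = {a_1}^{-1} x$), under which the relations $[a_1,t]=1$ and $[p, a_1 t]=1$ become $[a_1,x] = 1$ and $[p,x]=1$.  This yields the presentation
$$\Gamma_1 \ \cong \ \langle \ a_1, x, p \ \mid \ [a_1,x]=1,\ [x,p]=1 \ \rangle.$$
Since $x$ commutes with both of the other generators it is central in $\Gamma_1$, and since no defining relation involves only $a_1$ and $p$, the subgroup $\langle a_1, p\rangle$ is free of rank $2$.  Combined with the fact that $\langle a_1, p\rangle \cap \langle x\rangle$ is trivial (the three generators are independent already in the abelianisation $\Z^3$), this gives $\Gamma_1 \cong F_2 \times \Z$, with the free factor generated by $a_1$ and $p$ and the central factor by $x$.

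For the upper bound $\Area_{\Gamma_1} \preceq n^2$, I would note that $F_2 \times \Z$ acts properly and cocompactly by isometries on the product of the $4$--valent regular tree with $\Reals$, which is $\CAT(0)$.  So $\Gamma_1$ is a $\CAT(0)$ group, and every such group has Dehn function at most quadratic (see, for instance, III.$\Gamma$.1.6 of~\cite{BrH}).  Alternatively one may appeal to the fact that $\Gamma_1$ is automatic as a direct product of automatic groups, from which a quadratic isoperimetric inequality follows.

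For the lower bound $\Area_{\Gamma_1} \succeq n^2$, I would observe that $\Gamma_1$ contains the subgroup $\langle a_1, t\rangle \cong \Z^2$.  Consequently $\Gamma_1$ is not word--hyperbolic, so Gromov's gap theorem --- that any finitely presented group whose Dehn function grows strictly more slowly than $n^2$ is hyperbolic --- gives $\Area_{\Gamma_1}(n) \succeq n^2$.  The only real content of the argument is unearthing the product decomposition $\Gamma_1 \cong F_2 \times \Z$; after that the two matching bounds are immediate from results already in the literature.
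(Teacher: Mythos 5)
Your proof is correct and takes essentially the same approach as the paper: both pass to new generators via the substitution built on $a_1 t$, recognise $\Gamma_1$ as a right-angled Artin group (concretely $F_2 \times \Z$), and then deduce the quadratic upper bound from the $\textup{CAT}(0)$ structure. You additionally spell out the matching $\succeq n^2$ lower bound (via the $\Z^2$ subgroup $\langle a_1, t \rangle$ and the subquadratic-implies-hyperbolic gap theorem), a step the paper's terse proof leaves implicit.
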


\begin{proof}
Making the substitution $\alpha = a_1 t$ shows that $\Gamma_1$ is a right-angled Artin group with presentation $\langle \, \alpha, t, p \, | \, [t, \alpha], [p, \alpha] \, \rangle$.  It follows that $\Gamma_1$ is $\textup{CAT}(0)$ \cite{CharneyDavis} whence it has Dehn function  $\simeq$-equivalent to $n^2$ by \cite[Proposition~1.6.III.$\Gamma$]{BrH}.
\end{proof}

\begin{prop} \label{prop7}
For all $k \geq 2$, the group $\Gamma_k$ has Dehn function $\simeq$--equivalent to $A_k$.
\end{prop}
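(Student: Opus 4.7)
The strategy is to prove $\Area_{\Gamma_k} \succeq A_k$ and $\Area_{\Gamma_k} \preceq A_k$ separately, both exploiting the distortion estimate $\Dist_{H_k}^{G_k} \simeq A_k$ from Theorem~\ref{main} together with the fact that $\Gamma_k$ is an HNN extension of $G_k$ with both associated subgroups equal to $H_k$ and associating isomorphism the identity.

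For the lower bound, for each $n$ I would pick a word $w_n$ of length $O(n)$ on $\{a_i, t\}^{\pm 1}$ that represents an element $h_n \in H_k$ with $d_{H_k}(1, h_n) \succeq A_k(n)$, which exists by Theorem~\ref{main}.  The commutator $c_n := p^{-1} w_n^{-1} p w_n$ is null-homotopic in $\Gamma_k$ and has length $O(n)$.  In any minimal-area van Kampen diagram for $c_n$, the two boundary occurrences of $p^{\pm 1}$ must be joined by a $p$-corridor, since every defining relator involving $p$ is of the form $[p, a_it]$ and hence contains exactly one $p$ and one $p^{-1}$.  Both side labels of this corridor are freely reduced words on $\{(a_it)^{\pm 1}\}$ that represent $h_n$ in $G_k$; since $H_k$ is free on $\{a_it\}$ (Proposition~\ref{freeness of H_k}), each side has length exactly $d_{H_k}(1, h_n) \geq A_k(n)$, which lower-bounds the area of the corridor, and hence of the diagram, by $A_k(n)$.

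For the upper bound, given a null-homotopic word $w$ of length at most $n$, I would induct on the number $m$ of occurrences of $p^{\pm 1}$ in $w$.  If $m=0$, then $w$ represents the identity in $G_k$, which is CAT(0), so $w$ can be filled with area at most $C_0 n^2$, and this is $\preceq A_k(n)$ for $k \geq 2$ by \eqref{A_k8}.  If $m \geq 1$, Britton's lemma supplies a pinch $p^\epsilon u p^{-\epsilon}$ with $u=u(a_i,t)$ of length at most $n$ representing some $h \in H_k$.  By Theorem~\ref{main}, $h$ is also represented by a word $v$ on $\{(a_it)^{\pm 1}\}$ of length at most $A_k(Cn+C)$.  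I eliminate the pinch by attaching (a) a $G_k$-sub-diagram with boundary $u v^{-1}$ of area at most $C_0(n+2A_k(Cn+C))^2 \preceq A_k(2Cn+2C)$ (using \eqref{A_k8}), and (b) a $p$-corridor of area $\ell(v)\leq A_k(Cn+C)$ realising $p^\epsilon v p^{-\epsilon} = v$.  The resulting word $w'$ has length at most $n-2$ and $m-2$ occurrences of $p^{\pm 1}$, so the induction continues.

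Unrolling gives a total area bound of $\tfrac{n}{2} \cdot O(A_k(2Cn+2C)) + C_0 n^2$, which --- combining $n \leq A_k(n)$ from \eqref{A_k2} with the submultiplicative estimate $A_k(a)A_k(b) \leq A_k(2\max\{a,b\})$ extracted from \eqref{A_k8} --- collapses to $A_k(C'n)$ for some constant $C'$, yielding $\Area_{\Gamma_k} \preceq A_k$.  The principal obstacle I expect is precisely in this bookkeeping: a naive accounting would make the $n$ successive pinch eliminations accumulate as an iterate $A_k^{(n)}$ rather than a single $A_k$, so one must carefully absorb the factor of $n$ into the argument of $A_k$ via \eqref{A_k1} or \eqref{A_k8} at each stage of the recursion so that the $n$ pinch eliminations contribute only a single-level shift in the argument.
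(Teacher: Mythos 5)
Your lower-bound argument is correct and is essentially the paper's illustration (the $p$-corridor argument with the commutator $[p, w_n]$).  For the upper bound the paper simply cites \cite[Theorem~6.20.III.$\Gamma$]{BrH}, which gives bounds $\max\{n^2, nA_k(n)\} \preceq \Area_{\Gamma_k}(n) \preceq nA_k(n)^2$ for the Dehn function of an HNN extension in terms of the Dehn function of the base and the distortion of the associated subgroup, and then uses \eqref{A_k2} and \eqref{A_k8} to collapse both sides to $A_k$; your attempt to reprove that theorem directly is more informative but has a genuine gap.

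The gap is the claim ``the resulting word $w'$ has length at most $n-2$''.  With the construction you describe --- attaching \emph{one} $G_k$-subdiagram with boundary $uv^{-1}$ followed by the $p$-corridor --- the pinch $p^{\epsilon}up^{-\epsilon}$ in the boundary is replaced by $v$, not by $u$.  Regarded as a $G_k$-word, $v$ has length up to $2A_k(Cn+C)$, so $\ell(w') \leq n - \ell(u) - 2 + 2A_k(Cn+C)$, which in general vastly exceeds $n$.  If one then naively applies the distortion estimate to the next pinch, whose $u$-subword is a subword of $w'$, one really does get the iterate $A_k^{(m/2)}$ that you flag as the ``principal obstacle''; but your proposed remedy (``absorb the factor of $n$ via \eqref{A_k1} or \eqref{A_k8}'') does not address it, since the problem is not a multiplicative prefactor $n$ but that the \emph{argument} of $A_k$ grows from one stage to the next.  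What actually rescues the argument is a correct version of your length claim: replace $p^{\epsilon}up^{-\epsilon}$ by $u$ (not $v$), which costs a \emph{second} $G_k$-subdiagram with boundary $vu^{-1}$ in addition to (a) and (b), so that $w'$ genuinely has length $n-2$, the pinch subword at every stage remains of length $\leq n$, and the distortion is always applied to an argument bounded by $Cn+C$.  The per-stage cost is then $2\delta_{G_k}\bigl(n + 2A_k(Cn+C)\bigr) + A_k(Cn+C) \preceq A_k(C'n)$ uniformly, and there are at most $n/2$ stages plus one final $p$-free $G_k$-filling, giving $\Area_{\Gamma_k}(n) \preceq n\,A_k(C'n) + n^2 \preceq A_k(C''n)$ via \eqref{A_k2}, \eqref{A_k8} and \eqref{A_k1}, exactly as you intended.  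With this correction the argument goes through; without it the induction does not close.
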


\begin{proof}
  Let $k \geq 2$.  The Dehn function of a $\textup{CAT}(0)$ group is either linear or quadratic \cite[Theorem~6.2.1]{Bridson6}, with the linear case occurring precisely when the group is hyperbolic \cite[Theorem 6.1.5]{Bridson6}.  By Theorem~\ref{main}, the group $G_k$ is $\textup{CAT}(0)$.  However, since it contains an embedded copy of $\Z^2$ it is not hyperbolic \cite[Theorem~6.1.10]{Bridson6}.  The Dehn function of $G_k$ is therefore quadratic.  By Theorem~\ref{main}, the distortion function of $H_k$ in $G_k$ is $\simeq$--equivalent to $A_k$.  Plugging these two functions into \cite[Theorem~6.20.III.$\Gamma$]{BrH} gives lower and upper bounds for the Dehn function of $\Gamma_k$ of $\max \{n^2, n A_k(n) \}$ and $n A_k(n)^2$ respectively, up to $\simeq$--equivalence.  By \eqref{A_k2}, this lower bound is equal to $A_k(n)^2$ and the upper bound is at most $A_k(n)^3$.  But \eqref{A_k8} implies that, for any $C \geq 1$, the function $n \mapsto A_k(n)^C$ is $\simeq$--equivalent to $A_k$.
\end{proof}

 \begin{figure}[ht]
\psfrag{p}{$p$}
\psfrag{v}{$v_{2,4}$}
\centerline{\epsfig{file=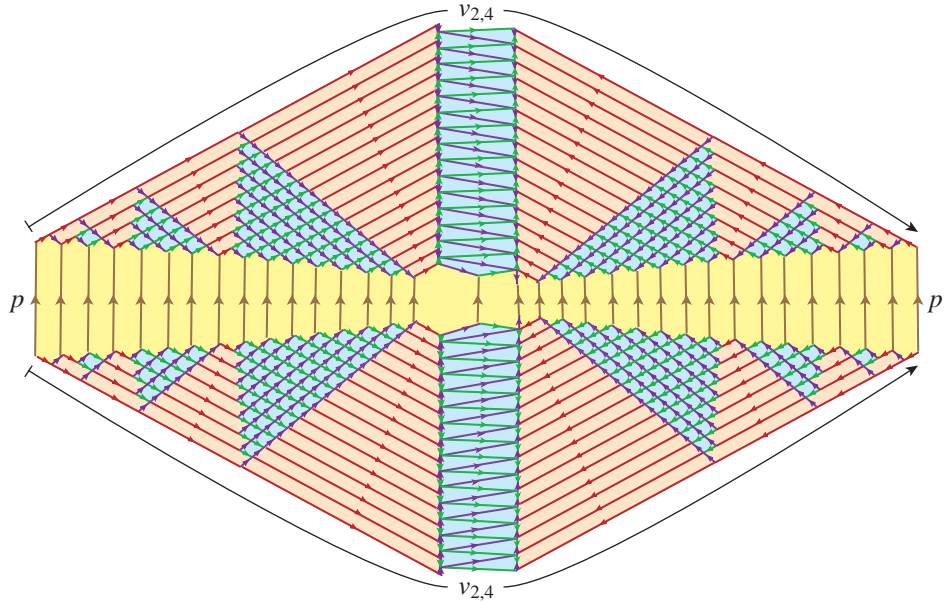}} \caption{A van~Kampen diagram for $[v_{2,4},p]$ --- an example of a word which represents the identity in $\Gamma_k$ but can only be filled by a large area diagram.} \label{huge area diagram}
\end{figure}

The ideas behind \cite[Theorem~6.20.III.$\Gamma$]{BrH} used here are most transparent via the tools of van~Kampen diagrams and corridors.  For example, towards the lower bound, consider the words $$v_{k,n} \ := \   {a_k}^{n} a_2 \  t a_1 \  {a_2}^{-1} {a_k}^{-n}$$  of Section~\ref{DistortionLower}, which equal $$w_{k,n}  \ := \  u_{k,n} \,  (a_2t) \,  (a_1 t) \, ({a_2}t)^{-1} \, {u_{k,n}}^{-1}$$ in $G_k$.
Observe that $[v_{k,n},p]=1$ in $\Gamma_k$ and that in any van~Kampen diagram for  $[v_{k,n},p]$, there must be a $p$--corridor connecting the two boundary edges labelled by $p$.  (Figure~\ref{huge area diagram} is an example of such a diagram when $k=2$ and $n=4$.)  The word on $a_1t, \ldots, a_kt$ written along each  side of this corridor must  
equal $v_{k,n}$ in $G_k$ and so freely equals $w_{k,n}$.  It follows that any van~Kampen diagram for $[v_{k,n},p]$ has area at least the length of $w_{k,n}$, which is $2 \H_k(n) +3$.  So, as the length of $[v_{k,n},p]$ is  $4n+10$,  this leads to a lower bound of $A_k(n)  \simeq \H_k$ on the Dehn function of $G_k$.

\bibliographystyle{plain}
\bibliography{$HOME/Dropbox/Bibliographies/bibli}

\ni
\small{\textsc{Will Dison} \rule{0mm}{6mm} \\
Department of Mathematics,
University Walk, Bristol, BS8 1TW, UK \\ \texttt{w.dison@bristol.ac.uk}, \
\href{http://www.maths.bris.ac.uk/~mawjd/}{http://www.maths.bris.ac.uk/$\sim$mawjd/}

\ni  \textsc{Timothy R.\ Riley} \rule{0mm}{6mm} \\
Department of Mathematics, 310 Malott Hall,  Cornell University, Ithaca, NY 14853, USA \\ \texttt{tim.riley@math.cornell.edu}, \
\href{http://www.math.cornell.edu/~riley/}{http://www.math.cornell.edu/$\sim$riley/}}
\end{document}